\documentclass{amsart}

\usepackage{amsmath} 
\usepackage{amssymb}
\usepackage{mathrsfs}

\newtheorem{theorem}{Theorem}[section] 
\newtheorem{claim}[theorem]{Claim}

\theoremstyle{definition}
\newtheorem{definition}[theorem]{Definition}

\newtheorem{observation}[theorem]{Observation}

\newtheorem{question}[theorem]{Question}

\newtheorem{discussion}[theorem]{Discussion}

\newtheorem{hypothesis}[theorem]{Hypothesis}

\theoremstyle{remark}
\newtheorem{remark}[theorem]{Remark}

\newtheorem{conclusion}[theorem]{Conclusion}

\newcommand{\then}{{\underline{then}}}
\newcommand{\when}{{\underline{when}}}
\newcommand{\Then}{{\underline{Then}}}

\newcommand{\Iff}{{\underline{iff}}}
\newcommand{\mn}{{\medskip\noindent}}
\newcommand{\sn}{{\smallskip\noindent}}

\newcommand{\rest}{{\restriction}}
 
\newcommand{\Dom}{{\rm Dom}} 
\newcommand{\Depth}{{\rm Depth}} 
\newcommand{\alm}{{\rm alm}} 
\newcommand{\dual}{{\rm dual}} 
\newcommand{\ps}{{\rm ps}} 
\newcommand{\pry}{{\rm pry}} 
\newcommand{\id}{{\rm id}} 
\newcommand{\sal}{{\rm sal}} 
\newcommand{\Reg}{{\rm Reg}} 
\newcommand{\Rep}{{\rm Rep}} 
\newcommand{\Rang}{{\rm Rang}} 
\newcommand{\Ord}{{\rm Ord}} 
\newcommand{\rk}{{\rm rk}} 
\newcommand{\comp}{{\rm comp}} 
\newcommand{\hrtg}{{\rm hrtg}} 
\newcommand{\otp}{{\rm otp}} 
\newcommand{\ptf}{{\rm ptf}} 
\newcommand{\tcf}{{\rm tcf}} 
\newcommand{\wtcf}{{\rm wtcf}} 
\newcommand{\pcf}{{\rm pcf}} 
\newcommand{\AC}{{\rm AC}} 
\newcommand{\Ax}{{\rm Ax}} 
\newcommand{\RK}{{\rm RK}} 
\newcommand{\ZF}{{\rm ZF}}
\newcommand{\IND}{{\rm IND}}
\newcommand{\DC}{{\rm DC}} 
\newcommand{\Fil}{{\rm Fil}}

\newcommand{\wilog}{{\rm without loss of generality}}
\newcommand{\Wilog}{{\rm Without loss of generality}}

\newcommand{\cA}{{\mathscr A}}

\newcommand{\und}{\underline}

\newcommand{\gb}{{\mathfrak b}}
\newcommand{\ga}{{\mathfrak a}}
\newcommand{\gc}{{\mathfrak c}}

\newcommand{\bbD}{{\mathbb D}}

\newcommand{\gd}{{\mathfrak d\/}}

\newcommand{\cF}{{\mathscr F}}
\newcommand{\cG}{{\mathscr G}}

\newcommand{\cP}{{\mathscr P}}

\newcommand{\cU}{{\mathscr U}}

%\define\wilog{\text{without loss of generality}}
%\define\ermn{\endroster\medskip\noindent}  

%\define\dbca{\dsize\bigcap}
%\define\dbcu{\dsize\bigcup}
%\define\rest{\restriction}
%\define \nl{\newline}

\newcommand{\cf}{{\rm cf}}

\newcount\skewfactor
\def\mathunderaccent#1#2 {\let\theaccent#1\skewfactor#2
\mathpalette\putaccentunder}
\def\putaccentunder#1#2{\oalign{$#1#2$\crcr\hidewidth
\vbox to.2ex{\hbox{$#1\skew\skewfactor\theaccent{}$}\vss}\hidewidth}}

 \newenvironment{PROOF}[2][\proofname.]
    {\begin{proof}[#1]}
    {\end{proof}}

\begin{document}

\title {Pseudo PCF}
\author {Saharon Shelah}
\address{Einstein Institute of Mathematics\\
Edmond J. Safra Campus, Givat Ram\\
The Hebrew University of Jerusalem\\
Jerusalem, 91904, Israel\\
 and \\
 Department of Mathematics\\
 Hill Center - Busch Campus \\ 
 Rutgers, The State University of New Jersey \\
 110 Frelinghuysen Road \\
 Piscataway, NJ 08854-8019 USA}
\email{shelah@math.huji.ac.il}
\urladdr{http://shelah.logic.at}
\thanks{The author thanks Alice Leonhardt for the beautiful typing.\\
Partially
supported by the United States-Israel Binational Science Foundation
(Grant No. 2006108).  Part of this work was done during the author's
visit to Mittag-Leffler Institute, Djursholm, Sweden.  He thanks the
Institute for hospitality and support, Publication 955.}  

\subjclass {MSC 2010: Primary 03E04, 03E25}

\keywords {set theory, pcf, axiom of choice}

\date{November 6, 2011}

\begin{abstract}
We continue our investigation on pcf with weak form of choice.
Characteristically we assume DC + $\cP(Y)$ when looking and
$\prod\limits_{s \in Y} \delta_s$.  We get more parallel of theorems
on pcf.
\end{abstract}

\maketitle
\numberwithin{equation}{section}
\setcounter{section}{-1}

\centerline {Anotated Content}
\bigskip

\noindent
\S0 \quad Introduction, pg.2
\bigskip

\noindent
\S1 \quad On pseudo true cofinality, pg.4
\mn
\begin{enumerate}
\item[${{}}$]  [We continue \cite[\S5]{Sh:938} to try to 
generalize the pcf theory for $\aleph_1$-complete filters
$D$ on $Y$ assuming only DC + AC$_{\cP(Y)}$.  So is similar to
\cite[ChXII]{Sh:b}.  We suggest to replace cofinality by pseudo
cofinality.  In particular we get the existence of a sequence of
generators, get a bound to Reg $\cap$ pp$(\mu) \backslash \mu_0$ using
the size of Reg $\cap \mu \backslash \mu_0$ using a no-hole claim and
existence of lub (unlike \cite{Sh:835}).   
\end{enumerate}
\bigskip

\noindent
\S2 \quad Composition and generating sequences for pseudo pcf, pg.14
\mn
\begin{enumerate}
\item[${{}}$]  [We deal with pseudo true cofinality of
$\prod\limits_{i \in Z} \, \prod\limits_{j \in Y_i} \lambda_{i,j}$ and
below also in the degenerated case each $\langle \lambda_{i,j}:j \in
Y_i\rangle$ is constant.  We then use it to clarify the state of
generating sequences; see \ref{e1}, \ref{e2}, \ref{e23},
\ref{e3}, \ref{e31}, \ref{e33}.
\end{enumerate}
\bigskip

\noindent
\S3 \quad Measuring Reduced products, pg.23
\bigskip

\noindent
\S (3A) \quad On ps-$\bold T_D(g)$
\mn
\begin{enumerate}
\item[${{}}$]  [We get that several measures of 
${}^\kappa \mu/D$ are essentially equal.] 
\end{enumerate}
\bigskip

\noindent
\S (3B) \quad Depth of reduced power of ordinals, pg.26
\mn
\begin{enumerate}
\item[${{}}$]  [Using the independence property for a sequence of
filters we can bound the relevant depth.  This generalizes
\cite{Sh:460} or really \cite[\S3]{Sh:513}.]
\end{enumerate}
\bigskip

\noindent
\S (3C) \quad Bounds on the Depth
\mn
\begin{enumerate}
\item[${{}}$]  [We start by basic properties by No-Hole Claim (\ref{r22}(1))
and dependence of $\bar\alpha/D$ only (\ref{c4}).  We give a bound for
$\lambda^{+\alpha(1)}/D$ (in Theorem \ref{c7}, \ref{c13}).]
\end{enumerate}
\bigskip

\noindent
\S (3D) \quad Concluding Remarks
\mn
\begin{enumerate}
\item[${{}}$]  [Comments to \cite{Sh:938}.]
\end{enumerate}
\bigskip

\noindent
\S4 \quad On RGCH with Little Choice
%\mn
%\begin{enumerate}
%\item[${{}}$]  [ ]
%\end{enumerate}
\newpage

\section {Introduction}

In the first section we deal with generalizing the pcf theory in the
direction started in \cite[\S5]{Sh:938} trying to understand pseudo
true cofinality of small products of regular cardinals.  
The difference with earlier works is that here we assume AC$_{\cU}$ for any
set $\cU$ of power $\le |\cP(\cP(Y))|$ or actually working harder,
just $\le |\cP(Y)|$ when analyzing 
$\prod\limits_{t \in Y} \alpha_t$, whereas in \cite{Sh:497} we 
assumed AC$_{\sup\{\alpha_t:t \in Y\}}$ and in \cite{Sh:835} we have
(in addition to AC$_{\cP(\cP(Y))}$) assumptions like
``${}^\omega(\sup\{\alpha_t:t \in Y\})$ is well ordered".  In 
\cite[\S1-\S4]{Sh:938} we assume only AC$_{< \mu} + DC$ and consider
$\aleph_1$-complete filters on $\mu$ but in the characteristic case
$\mu$ is a limit of measurable cardinals.

Note that generally in this work, though we try occasionally not to
use DC, it will not be a real loss to assume it all the time.  More
specifically, we prove the existence of a minimal $\aleph_1$-complete
filter $D$ on $Y$ such that $\lambda = \text{ ps-tcf}(\Pi
\bar\alpha,<_D)$ assuming AC$_{\cP(Y)}$ and (of course DC and)
$\alpha_t$ of large enough cofinality.  We then prove the existence of
$X \subseteq Y$ such that $J^{\aleph_1\text{-comp}}_{\le
\lambda}[\bar\alpha] = J^{\aleph_1\text{-comp}}_{<
\lambda}[\bar\alpha] +X$, see \ref{r18} and even (in \ref{r19}) the
parallel of existence $a$ a $<_{D_1}$-lub for an $<_D$-increasing
sequence $\langle \cF_\alpha:\alpha < \lambda\rangle$, 
generalize the no-hole claim \ref{r22}, and give a bound on pp for
non-fix points (\ref{r21}).

In \S2 we further investigate true cofinality.  In Claim \ref{e2},
assuming AC$_\lambda$ and $D$ an $\aleph_1$-complete filter on $Y$, we start
from ps-tcf$(\Pi \bar \alpha,<_D)$, dividing by eq$(\bar\alpha) =
\{(s,t):\alpha_s = \alpha_t\}$.  We also prove the composition
Theorem \ref{e3}: when ps-tcf$(\prod\limits_{i} \text{
ps-tcf}(\prod\limits_{j} \lambda_{i,j},<_{D_i}),<_E)$ is
ps-tcf$(\prod\limits_{(i,j)} \lambda_{i,j},<_D)$.

We then prove the pcf closure conclusion: giving a sufficient
condition for the operation ps-pcf$_{\aleph_1\text{-comp}}$ to be idempotent.
Lastly, we revisit the generating sequence.

In \S(3A) we measure $\prod\limits_{t \in Y} g(t)$ for $g \in
{}^Y(\text{Ord} \backslash \{0\})$ in three ways and show they are
almost equal in \ref{r24}.  The price is
that we replace (true) cofinality by pseudo (true) cofinality, which
is inevitable.

In \S (3B) we 
prove a relative of \cite[\S3]{Sh:513}; again dealing with depth
(instead of rank as in \cite{Sh:938}) 
adding some information even under ZFC.  Assuming that 
the sequence $\langle D_n:n < \omega\rangle$ of filters
has the independence property (IND), see Definition 
\ref{k4}, with $D_n$ a filter
on $Y_n$ we can bound the depth of ${}^{(Y_n)}\zeta$ by $\zeta$, for every
$\zeta$ for  many $n$'s, see \ref{k6}.  Of
course, we can generalize this to $\langle D_s:s \in S\rangle$.  
This is incomparable with the results of
\cite[\S4]{Sh:938}; also we add some cases to \cite[\S4]{Sh:938}.

Note that the assumptions like IND$(\bar D)$ are complimentary to ones used
in \cite{Sh:835} to get considerable information.
Our original hope was to arrive to a dichotomy.  
The first possibility will say that one of
the versions of an axiom suggested in \cite{Sh:835} holds, which means
``for some suitable algebra", there is no independent
$\omega$-sequence; in this case \cite{Sh:835} tells us much.  The
second possibility will be a case of IND, and then we try to show that
there is a rank system in the sense of \cite{Sh:938}.  But presently
for this we need too much choice.  The dichotomy we succeed to prove is
with small o-Depth in
one side, the results of \cite{Sh:835} on the other side.  It would be
better to have ps-o-Depth in the first side.
We try to sort out the ``almost equal" in \ref{r32} - \ref{r36}. 
 
\begin{question}
\label{r15} 
[DC + AC$_{\cP(Y)}$]

Assume
\medskip

\noindent
\begin{enumerate}
\item[$(a)$]   $\bar\alpha \in {}^Y$ {\rm Ord}
\smallskip

\noindent
\item[$(b)$]   cf$(\alpha_t) \ge \hrtg({\cP}(Y)$ for every $t \in Y$
\smallskip

\noindent
\item[$(c)$]  $\lambda_t \in 
\text{ pcf}_{\aleph_1-\comp}(\bar\alpha)$ for $t \in Z$, in fact,
$\lambda_t = \text{ ps-tcf}(\Pi \bar \alpha,<_{D_t}),D_t$ is a
$\aleph_1$-complete filter on $Y$
\smallskip

\noindent
\item[$(d)$]   $\lambda =$ {\rm ps-tcf}$_{\aleph_1-\comp}
(\langle \lambda_t:t \in Z\rangle$
\smallskip

\noindent
\item[$(e)$]   (a possible help) $X_t \in D_t,\langle X_t:t \in Y\rangle$ are
pairwise disjoint.
\end{enumerate}
\medskip

\noindent
$(A) \quad$ Now does $\lambda \in 
\text{ ps-pcf}_{\aleph_1\text{-comp}}(\bar \alpha)$?  Can we say
something on $D_\lambda$ from 

\hskip25pt \cite[5.9]{Sh:938} improved in \ref{r16}
\medskip

\noindent
$(B)\quad$ At least when trying to generalize the RGCH, see
\cite{Sh:460} and \cite{Sh:829}.
\end{question}
\newpage

\section {On pseudo true cofinality}

We continue \cite[\S5]{Sh:938}. 

Below we give an improvement of \cite[5.19]{Sh:938}, omitting {\rm
DC} from the assumptions but first we observe
\begin{claim}
\label{r15}
Assume {\rm AC}$_Z$.

\noindent
1) We have $\theta \ge \hrtg(Z)$ \when \,
   $\bar\alpha = \langle \alpha_t:t \in Y\rangle$ and $\theta \in$ 
{\rm ps-pcf}$(\Pi \bar \alpha)$ and $t \in Y \Rightarrow$ {\rm cf}$(\alpha_t) 
\ge \hrtg(Z)$.

\noindent
2) We have $\cf(\rk_D(\bar\alpha))) \ge \hrtg(Z)$ \when
   \, $\bar\alpha = \langle \alpha_t:t \in Y\rangle,t \in Y
   \Rightarrow \cf(\alpha_t) \ge \hrtg(Z)$.
\end{claim}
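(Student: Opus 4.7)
The plan is to prove both parts by contradiction, exploiting the contrapositive of the $\hrtg$ bound: if the target ordinal were strictly below $\hrtg(Z)$ it would inject into $Z$, and combined with $\AC_Z$ this yields $\AC_\theta$ at that bound $\theta$. This small amount of choice is just enough to perform a pointwise-supremum construction that contradicts cofinality.

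For part (1), suppose toward contradiction that $\theta<\hrtg(Z)$. Unpacking $\theta\in\ps\text{-}\pcf(\Pi\bar\alpha)$ gives an $\aleph_1$-complete filter $D$ on $Y$ with $\ps\text{-}\tcf(\Pi\bar\alpha,<_D)=\theta$, and (invoking $\AC_\theta$ if needed to extract it from the definition in \cite[\S5]{Sh:938}) a $<_D$-cofinal sequence $\langle f_\alpha:\alpha<\theta\rangle\subseteq\Pi\bar\alpha$. For each $t\in Y$ set $A_t=\{f_\alpha(t):\alpha<\theta\}$; since $|A_t|$ is an initial ordinal at most $\theta<\hrtg(Z)\le\cf(\alpha_t)$, the set $A_t$ is bounded in $\alpha_t$, so $\beta_t:=\sup A_t<\alpha_t$. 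Each $\alpha_t$ is a limit ordinal (because $\cf(\alpha_t)$ is uncountable), so $\bar\beta+1$ still lies in $\Pi\bar\alpha$. Pointwise $f_\alpha(t)\le\beta_t<\beta_t+1$, whence $f_\alpha<_D\bar\beta+1$ for every $\alpha<\theta$; but cofinality demands some $f_\alpha$ to $<_D$-dominate $\bar\beta+1$, a contradiction.

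For part (2), set $\theta:=\cf(\rk_D(\bar\alpha))$ and again suppose $\theta<\hrtg(Z)$, so $\AC_\theta$ is available. Fix an increasing cofinal sequence $\langle\rho_\alpha:\alpha<\theta\rangle$ in $\rk_D(\bar\alpha)$, and by $\AC_\theta$ pick $\bar f^\alpha\in\Pi\bar\alpha$ with $\rk_D(\bar f^\alpha)\ge\rho_\alpha$ for each $\alpha<\theta$. Setting $\beta_t:=\sup\{f^\alpha(t):\alpha<\theta\}$, the hypothesis $\cf(\alpha_t)\ge\hrtg(Z)>\theta$ forces $\beta_t<\alpha_t$, so $\bar\beta\in\Pi\bar\alpha$; since $\bar\beta<_D\bar\alpha$ pointwise we get $\rk_D(\bar\beta)<\rk_D(\bar\alpha)$. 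By monotonicity of $\rk_D$ under $\le_D$, $\rho_\alpha\le\rk_D(\bar f^\alpha)\le\rk_D(\bar\beta)<\rk_D(\bar\alpha)$ for every $\alpha<\theta$, and taking the supremum yields $\rk_D(\bar\alpha)\le\rk_D(\bar\beta)<\rk_D(\bar\alpha)$, the desired contradiction.

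The main obstacle is conceptual rather than computational: one must verify from the definition of $\ps\text{-}\tcf$ in \cite[\S5]{Sh:938} that a genuine $\theta$-indexed $<_D$-cofinal sequence is extractable using only $\AC_\theta$, and (for (2)) invoke the standard monotonicity of $\rk_D$ under $\le_D$ in the present choice-restricted setting. Once these points are granted, the pointwise-supremum step is immediate and identical in both parts.
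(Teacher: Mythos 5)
Your overall strategy (take pointwise suprema, use $\cf(\alpha_t)\ge\hrtg(Z)$ to stay inside $\Pi\bar\alpha$, and contradict cofinality resp.\ the value of $\rk_D$) is exactly the paper's, but there is a genuine gap at the very first step: you assert that $\theta<\hrtg(Z)$ together with $\AC_Z$ yields $\AC_\theta$, and you use this to extract a $\theta$-indexed sequence of single functions $\langle f_\alpha:\alpha<\theta\rangle$ (and, in part (2), the sequence $\langle \bar f^\alpha:\alpha<\theta\rangle$). That inference requires an \emph{injection} of $\theta$ into $Z$, whereas $\theta<\hrtg(Z)$ as used in this paper only provides a \emph{surjection} $h$ from $Z$ onto $\theta$. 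With only a surjection, $\AC_Z$ lets you choose one element of $\cF_{h(z)}$ for each $z\in Z$, but it does not let you choose one representative per fiber $h^{-1}(\alpha)$, so $\AC_\theta$ does not follow. The paper is explicit about this: the proof opens with ``If we have $\AC_\alpha$ for every $\alpha<\hrtg(Z)$ then we can use [5.7(4)] but we do not assume this,'' and the witnessing object for ps-tcf is a sequence of nonempty \emph{sets} $\langle\cF_\alpha:\alpha<\theta\rangle$ precisely because a sequence of single functions need not be extractable.

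The repair is the paper's device and costs nothing in your argument: fix the surjection $h:Z\to\theta$, use $\AC_Z$ to choose $\langle f_z:z\in Z\rangle$ with $f_z\in\cF_{h(z)}$, and define $g(s)=\sup\{f_z(s):z\in Z\}$ (a sup over $Z$, not over $\theta$). Since $\{f_z(s):z\in Z\}$ has Hartogs number $\le\hrtg(Z)\le\cf(\alpha_s)$, still $g\in\Pi\bar\alpha$, and $g$ dominates at least one member of every $\cF_\alpha$, which contradicts cofinality of $\bar\cF$ just as in your version; part (2) is fixed identically by choosing $f_z$ with $\rk_D(f_z)\ge\rho_{h(z)}$ and taking the sup over $Z$. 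Everything after the extraction step in your write-up (boundedness of the pointwise sups, $\bar\beta+1\in\Pi\bar\alpha$, monotonicity of $\rk_D$, and the final contradictions) is correct and matches the paper.
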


\begin{PROOF}{\ref{r15}}
Clearly $\theta$ is a regular cardinal.

\noindent
1) If we have AC$_\alpha$ for every $\alpha < \hrtg(Z)$ then we can
   use \cite[5.7(4)]{Sh:938} but we do not assume this.  
In general let $D$ be an
$\aleph_1$-complete filter on $Y$ such that 
$\theta =$ {\rm ps-tcf}$(\Pi \bar \alpha,<_D)$, exists as we are
 assuming $\theta \in$ {\rm ps-pcf}$(\Pi \bar \alpha)$.  Let
   $\bar{\cF} = \langle \cF_\alpha:\alpha < \theta \rangle$ witness
$\theta =$ {\rm ps-tcf}$(\Pi \bar \alpha,<_D)$, i.e. as in
\cite[5.6(2)]{Sh:938} note as $t \in Y \Rightarrow \alpha_t >
   0$, we are assuming $\cF_\alpha \subseteq \Pi \bar\alpha$; also we
   can assume $\cF_\alpha \ne \emptyset$ for every $\alpha < \theta$.

Toward contradiction assume $\theta := \cf(\theta) < \hrtg(Z)$.
As $\theta < \hrtg(Z)$, there is a function $h$ from $Z$
onto $\theta$, so the sequence $\langle \cF_{h(z)}:z \in
Z\rangle$ is well defined.  As we are assuming AC$_Z$, there is a
sequence $\langle f_z:z \in Z\rangle$ such that $f_z \in
\cF_{h(z)}$ for $z \in Z$.  Now define $g \in
{}^Y(\text{Ord})$ by $g(s) = \cup\{f_z(s):z \in Z\}$; clearly $g$
exists and $g \le \bar\alpha$.  But for each $s \in Y$, the set
$\{f_z(s):z \in Z\}$ is a subset of $\alpha_s$ of cardinality $<
\hrtg(Z)$ hence $< \text{ cf}(\alpha_s)$ hence $g(s) < \alpha_s$.  Together $g
\in \Pi\bar\alpha$ is a $<_D$-upper bound of
$\cup\{\cF_\varepsilon:\varepsilon < \theta\}$, 
contradiction to the choice of $\bar{\cF}$.

\noindent
2) Otherwise let $\theta < \hrtg(Z),\langle
   \alpha_\varepsilon:\varepsilon < \theta \rangle$ be increasing with
   limit rk$_D(\bar\alpha)$ and again let $g$ be a function from $Z$
   onto $\theta$.  As AC$_Z$ holds, we can find $\langle f_z:z \in Z 
\rangle$ such that for every $z \in Z$ we have 
rk$_D(f_z) \ge \alpha_{h(z)}$ and $f_z <_D \bar\alpha$ and 
\wilog $f_z \in \Pi\bar\alpha$.  Let $f
   \in \Pi\bar\alpha$ be $f(t) = \sup\{f_{h(z)}(t):z \in Z\}$ 
so rk$_D(f) \ge \sup\{\alpha_z:z \in Z\} 
= \rk_D(\bar\alpha) > \rk_D(f)$, contradiction.
\end{PROOF}

\begin{theorem}
\label{r16} 
\underline{The Canonical Filter Theorem}
Assume {\rm AC}$_{\cP(Y)}$.

Assume $\bar \alpha = \langle \alpha_t:t \in Y\rangle \in
{}^Y\text{\rm Ord}$ and $t \in Y \Rightarrow \cf(\alpha_t) 
\ge \hrtg({\cP}(Y))$ and
$\partial \in$ {\rm ps-pcf}$_{\aleph_1-\comp}(\bar \alpha)$
hence is a regular cardinal.  \Then \, there is $D =
D^{\bar\alpha}_\partial$, an $\aleph_1$-complete filter on $Y$ such that 
$\partial =$ {\rm ps-tcf}$(\Pi \bar \alpha/D)$ and $D \subseteq D'$
for any other such $D' \in \text{\rm Fil}^1_{\aleph_1}(D)$.
\end{theorem}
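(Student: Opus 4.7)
The plan is to define the canonical filter as the intersection of all candidate filters, and then to verify that this intersection is itself a candidate. Concretely, set
\[ \cD_\partial := \{D' \in \Fil^1_{\aleph_1}(Y) : \partial = \text{ps-tcf}(\Pi\bar\alpha, <_{D'})\}, \]
which is a nonempty subset of $\cP(\cP(Y))$ by the hypothesis $\partial \in \text{ps-pcf}_{\aleph_1-\comp}(\bar\alpha)$, and let $D := \bigcap \cD_\partial$. Minimality ($D \subseteq D'$ for every $D' \in \cD_\partial$) is built into the definition, and $D$ is an $\aleph_1$-complete filter on $Y$: closure under countable intersections of any already-specified sequence passes term-by-term through each $D' \in \cD_\partial$, requiring no choice.

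The substantive task is to show $D \in \cD_\partial$. Applying Claim \ref{r15}(1) with $Z = \cP(Y)$ immediately yields $\partial \geq \hrtg(\cP(Y))$, providing the cardinal gap I will use to control suprema. My plan is to first establish the auxiliary lemma that $\cD_\partial$ is downward directed: given $D_1, D_2 \in \cD_\partial$ with witnesses $\bar{\cF}^1, \bar{\cF}^2$, the pointwise-max sequence
\[ \cG_\alpha := \{\max(f_1,f_2) : f_1 \in \cF^1_\alpha,\ f_2 \in \cF^2_\alpha\} \]
witnesses $D_1 \cap D_2 \in \cD_\partial$; membership $\cG_\alpha \subseteq \Pi\bar\alpha$ uses $\cf(\alpha_t) \geq \hrtg(\cP(Y))$ exactly as in the sup-computation in the proof of \ref{r15}(1), while cofinality is automatic because any $g \in \Pi\bar\alpha$ is dominated mod each $D_i$ at some level $\alpha < \partial$ and $(D_1 \cap D_2)$-bigness of the intersection of the two dominating sets is trivial.

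To promote this to the full intersection $D$, I would use AC$_{\cP(Y)}$ to pick, for each $A \in \cP(Y) \setminus D$, some $D_A \in \cD_\partial$ with $A \notin D_A$ together with a witness $\bar{\cF}^A$; the desired witness $\bar{\cF}$ for $D$ is then manufactured from these by iterated pointwise-max envelopes along finite subsets of $\cP(Y) \setminus D$, using directedness. The main obstacle is verifying $<_D$-cofinality of $\bar{\cF}$: a failure would produce $g \in \Pi\bar\alpha$ that is unbounded mod $D$, and for each $A$ one could extract $B_A \in D_A$ on which $g$ witnesses the failure; the family $\{B_A\}_{A \in \cP(Y)\setminus D}$, assembled via AC$_{\cP(Y)}$, would then generate an $\aleph_1$-complete filter $D^* \in \cD_\partial$ properly refining $D$, contradicting minimality. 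This contradiction-step is where the DC used in \cite[5.19]{Sh:938} is being traded for Claim \ref{r15}, and I expect it to be the technical crux of the proof.
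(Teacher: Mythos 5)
Your setup (intersect all candidate filters to get $D_*$, use AC$_{\cP(Y)}$ to select one filter $D_A$ per set $A\notin D_*$ together with a witness $\bar{\cF}^A$, and build sup/max envelopes) matches the opening of the paper's proof. But the crux of the theorem is exactly the step you declare automatic, and your argument for it fails. Recall that a witness $\langle \cG_\alpha:\alpha<\partial\rangle$ must be $<_{D_1\cap D_2}$-\emph{increasing}: for $\alpha<\beta$, \emph{every} $h\in\cG_\alpha$ must satisfy $h< h'$ mod $D_1\cap D_2$ for \emph{every} $h'\in\cG_\beta$. For your pointwise-max families, with $h=\max(f_1,f_2)$ and $h'=\max(g_1,g_2)$, the set $\{y:h(y)<h'(y)\}$ only provably contains $\{y:f_1(y)<g_1(y)\}\cap\{y:f_2(y)<g_2(y)\}$, i.e.\ the intersection of a $D_1$-set with a $D_2$-set; this need not belong to $D_1\cap D_2$ (which requires membership in \emph{both} filters). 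You verify membership in $\Pi\bar\alpha$ and cofinality, but never increasingness, and that is precisely what does not come for free. Two further steps are also broken: (i) iterating max-envelopes over \emph{finite} subsets of $\cP(Y)\setminus D$ cannot reach the full intersection $D_*$ --- one must take the supremum over \emph{all} $A\in\cP_*$ at once, as the paper does in $(*)_2$--$(*)_4$; and (ii) your closing contradiction is vacuous: a filter $D^*$ ``properly refining'' $D$ satisfies $D\subseteq D^*$, which is exactly what minimality asserts, so no contradiction with minimality arises.

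What the paper does at this point is the real content of the proof. After forming the simultaneous sup-envelopes $\cF^1_\beta=\{\sup\{f_A:A\in\cP_*\}:\langle f_A\rangle\in\prod_A\cF^A_\beta\}$ (which are cofinal but, for the reason above, not $<_{D_*}$-increasing), it defines for each $f\in\Pi\bar\alpha$ the ordinal $\beta_A(f)=\min\{\beta: f<g \bmod D_A$ for all $g\in\cF^A_\beta\}$ and $\beta(f)=\sup_A\beta_A(f)<\partial$ (using $\partial\ge\hrtg(\cP(Y))$ and regularity of $\partial$, via Claim \ref{r15}), then restricts to $\cF^2_\xi=\{f\in\cF^1_\xi:\beta(f)=\gamma_\xi\}$ where $\gamma_\xi$ is the minimal such value, and finally re-indexes along a fast-growing sequence $\langle\beta_\varepsilon:\varepsilon<\partial\rangle$ with $\beta_{\zeta+1}=\gamma_{\beta_\zeta}$. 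Only then does one get that $f\in\cF^2_{\beta_\zeta}$ and $g\in\cF^1_{\beta_\xi}$ with $\zeta<\xi$ satisfy $f<g$ mod $D_A$ for \emph{every} $A\in\cP_*$ simultaneously, hence mod $D_*$. Some device of this kind (a uniform ``level function'' $\beta(\cdot)$ plus minimization and re-indexing) is unavoidable; your proposal is missing it.
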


\begin{remark}
\label{c17d}
1) By \cite[5.9]{Sh:938} there are some such $\partial$ if DC holds.

\noindent
2) We work more to use just AC$_{\cP(Y)}$ and not more.

\noindent
3) If $\kappa >\aleph_0$ we can replace ``$\aleph_1$-complete" by 
``$\kappa$-complete". 

\noindent
4) If we waive ``$\partial$ regular" so just $\partial$, an ordinal,
   is a pseudo true cofinality of $(\Pi \bar\alpha,<_D)$ for $D, \in
   \bbD \subseteq \Fil^1_{\aleph_1}(Y)$, exemplified by
   $\bar{\cF}^D,\bbD \ne \emptyset$ the proof gives some
   $\partial',\cf(\gamma') = f(\gamma)$ and $\bar{\cF}$ witnessing
   $(\Pi \bar\alpha,<_{D_*})$ has pseudo true cofinality where $D_* =
   \cap\{D:D \in \bbD\}$.
\end{remark}

\begin{PROOF}{\ref{r16}}   
Note that by \ref{r15}
\mn
\begin{enumerate}
\item[$\boxplus_1$]  $\oplus \quad \partial \ge \hrtg(\cP(Y))$.
\end{enumerate}
\mn
Let 
\medskip

\noindent
\begin{enumerate}
\item[$\boxplus_2$]   $(a) \quad \bbD = \{D:D$ is an 
$\aleph_1$-complete filter on $Y$ such that $(\Pi\bar\alpha/D)$ has

\hskip40pt   pseudo true cofinality $\partial\}$,
\smallskip

\noindent
\item[${{}}$]   $(b) \quad D_* = \cap\{D:D \in \bbD\}$.
\end{enumerate}
\mn
Now obviously
\medskip

\noindent
\begin{enumerate}
\item[$\boxplus_3$]   $(a) \quad \bbD$ is non-empty
\sn
\item[${{}}$]  $(b) \quad D_*$ is an $\aleph_1$-complete filter on $Y$.
\end{enumerate}
\mn
For $A \subseteq Y$ let $\bbD_A = \{D \in \bbD:A \notin D\}$ and
let ${\cP}_* = \{A \subseteq Y:\bbD_A \ne \emptyset\}$, equivalently
$\cP_* = \{A \subseteq Y:A \ne \emptyset$ mod $D\}$.  As
AC$_{{\cP}(Y)}$ holds we can find $\langle D_A:A \in {\cP}_*\rangle$
such that $D_A \in \bbD_A$ for $A \in {\cP}_*$.  Let $\bbD_* =
\{D_A:A \in {\cP}_*\}$, clearly
\medskip

\noindent
\begin{enumerate}
\item[$\boxplus_4$]  $(a) \quad D_* = \cap\{D:D \in \bbD_*\}$
\sn
\item[${{}}$]  $(b) \quad \bbD_* \subseteq \bbD$ is non-empty.
\end{enumerate}
\medskip

\noindent
As AC$_{\cP_*}$ holds clearly
\medskip

\noindent
\begin{enumerate}
\item[$(*)_1$]   we can 
choose $\langle \bar{\cF}^A:A \in \cP_*\rangle$ such that 
$\bar{\cF}^A$ exemplifies $D_A \in \bbD$ as in 
\cite[5.17,(1),(2)]{Sh:938}, so in particular is $\aleph_0$-continuous
and \wilog \, $\cF^A_\alpha \ne \emptyset,\cF^A_\alpha \subseteq
\Pi\bar\alpha$ for every $\alpha < \partial$.
\end{enumerate}

For each $\beta < \partial$ let
\mn
\begin{enumerate}
\item[$(*)_2$]   $\bold F^1_\beta = \{\bar f = \langle f_A:A \in
\cP_* \rangle:\bar f$ satisfies $A \in \cP_* \Rightarrow f_A \in
\cF^A_\beta\}$
\sn
\item[$(*)_3$]   for $\bar f \in \bold F^1_\beta$ let sup$\{f_A:A \in
\cP_*\}$ be the function $f \in {}^Y\text{Ord}$ defined by $f(y) =
\sup\{f_A(y):A \in \cP_*\}$
\sn
\item[$(*)_4$]   $\cF^1_\beta = \{\sup\{f_A:A \in \cP_*\}:\bar f =
\langle f_A:A \in \cP_*\rangle$ belongs to $\bold F^1_\beta\}$.
\end{enumerate}
\mn
Now
\mn
\begin{enumerate}
\item[$(*)_5$]  $(a) \quad \langle \cF^1_\beta:\beta <
\partial\rangle$ is well defined, i.e. exist
\sn
\item[${{}}$]  $(b) \quad {\cF}^1_\beta \subseteq \Pi \bar\alpha$.
\end{enumerate}
\mn
[Why?  Clause (a) holds by the definitions, clause (b) holds as 
$t \in Y \Rightarrow \text{ cf}(\alpha_t) \ge \hrtg(\cP(Y))$.]
\mn
\begin{enumerate}
\item[$(*)_6$]  $\cF^1_\beta \ne \emptyset$ for $\beta < \partial$.
\end{enumerate}
\mn
[Why?  As for $\beta < \lambda$, the sequence $\langle
\bar{\cF}^A_\beta:A \in \cP_*\rangle$ is well defined
(as $\langle \bar{\cF}^A:A \in \cP_*\rangle$ is) and $A \in \cP_*
\Rightarrow \cF^A_\alpha \ne \emptyset$, so we can use AC$_{\cP(Y)}$ to
deduce $\cF^1_\beta \ne \emptyset$.]

Define
\mn
\begin{enumerate}
\item[$(*)_7$]  $(a) \quad$ for $f \in \Pi \bar\alpha$ and $A \in \cP_*$ let

\hskip25pt $\beta_A(f) = \min\{\beta < \lambda:f < 
g \text{ mod } D_A$ for every $g \in \cF^A_\beta\}$
\sn
\item[${{}}$]  $(b) \quad$ for $f \in \Pi \bar\alpha$ 
let $\beta(f) = \sup\{\beta_A(f):A \in \cP_*\}$.
\end{enumerate}
\mn
Now
\mn
\begin{enumerate}
\item[$(*)_8$]  for $A \in \cP_*$ and $f \in 
\Pi\bar\alpha$, the ordinal $\beta_A(f) < \partial$ is well defined.
\end{enumerate}
\mn
[Why?  As $\langle \cF^A_\gamma:\gamma < \partial\rangle$ is cofinal in
$(\Pi,\bar\alpha,<_{D_A})$.] 
\mn
\begin{enumerate}
\item[$(*)_9$]  $(a) \quad$ for $f \in \Pi \bar\alpha$ the
ordinal $\beta(f)$ is well defined and $< \partial$
\sn
\item[${{}}$]  $(b) \quad$ if $f \le g$ are from $\Pi \bar\alpha$ then
$\beta(f) \le \beta(g)$.
\end{enumerate}
\mn
[Why?  For clause (a), first, $\beta(f)$ is well defined and 
$\le \partial$ by $(*)_8$ and the definition of $\beta(f)$ in
$(*)_7(b)$.  
Second, recalling that $\partial$ is regular $\ge
\hrtg(\cP(Y)) \ge \hrtg(\cP_*)$ clearly $\beta(f) < \partial$.  Clause
(b) is obvious.]

Now
\mn
\begin{enumerate}
\item[$(*)_{10}$]  $(a) \quad$ if $A \in \cP_*,\gamma < \partial$ and
$f \in \cF^A_\gamma$ then $\beta_A(f) > \gamma$
\sn
\item[${{}}$]  $(b) \quad$ if $\gamma < \partial$ and $f \in
\cF^1_\gamma$ then $\beta_A(f) > \gamma$.
\end{enumerate}
\mn
[Why?  Clause (a) holds because $\beta < \gamma \wedge g \in
\cF^A_\gamma \Rightarrow g < f$ mod $D_A$ and $\beta = \gamma
\Rightarrow f \in \cF^A_\gamma \wedge f \nleq f$ mod $D_A$.  Clause
(b) holds because for some $\langle f_B:B \in \cP_*\rangle \in
\Pi\{F^B_\gamma:B \in \cP_*\}$ we have $f = \sup\{f_B:B \in \cP_*\}$
hence $B \in \cP_* \Rightarrow f_B \le f$ hence in particular $f_A \le
f$, recalling $\beta(f_A) > \gamma$ by clause (a) it follows that
$\beta(f) > \gamma$.]
\mn
\begin{enumerate}
\item[$(*)_{11}$]  $(a) \quad$ for $\xi < \partial$ let
$\gamma_\xi = \min\{\beta(f):f \in \cF^1_\xi\}$
\sn
\item[${{}}$]  $(b) \quad$ for $\xi < \partial$ let $\cF^2_\xi = \{f
\in \cF^1_\xi:\beta(f) = \gamma_\xi\}$
\sn
\item[$(*)_{12}$]   $(a) \quad \langle (\gamma_\xi,\cF^2_\xi):\xi < \partial
\rangle$ is well defined, i.e. exists
\sn
\item[${{}}$]  $(b) \quad$ if $\xi < \partial$ then $\xi < \gamma_\xi <
\partial$.
\end{enumerate}
\mn
[Why?  $\gamma_\xi$ is the minimum of a set of ordinals which is
non-empty by $(*)_6$ and $\subseteq \partial$, by $(*)_8$, and all
members are $> \gamma$ by $(*)_{10}(a)$.]
\mn
\begin{enumerate}
\item[$(*)_{13}$]  for $\xi < \partial$ we have $\cF^2_\xi \subseteq
\Pi \bar\alpha$ and $\cF^2_\xi \ne \emptyset$.
\end{enumerate}
\mn
[Why?  By $(*)_{11}$ as $\cF^1_\xi \ne \emptyset$ and $\cF^1_\xi
\subseteq \Pi\bar\alpha$.]
\mn
\begin{enumerate}
\item[$(*)_{14}$]  we try to define $\beta_\varepsilon < \partial$ by
induction on the ordinal $\varepsilon < \partial$

\underline{$\varepsilon = 0$}:  $\beta_\varepsilon = 0$

\underline{$\varepsilon$ limit}: $\beta_\varepsilon =
\cup\{\beta_\zeta:\zeta < \varepsilon\}$

\underline{$\varepsilon = \zeta +1$}:  $\beta_\varepsilon =
\gamma_{\beta_\zeta}$
\sn
\item[$(*)_{15}$]  $(a) \quad$ if $\varepsilon < \partial$ then
$\beta_\varepsilon < \partial$ is well defined $\ge \varepsilon$
\sn
\item[${{}}$]  $(b) \quad$ if $\zeta < \varepsilon$ is well defined
then $\beta_\zeta < \beta_\varepsilon$.
\end{enumerate}
\mn
[Why?  Clause (a) holds as $\partial$ is a regular cardinal
so the case $\varepsilon$
limit is O.K., the $\varepsilon = \zeta +1$ holds by $(*)_{12}$.  As for
clause (b) recall $\beta_A(f) > \xi$ for $f \in \cF^1_\xi$ by $(*)_{10}(b)$.]
\mn
\begin{enumerate}
\item[$(*)_{16}$]  if $f \in \Pi\bar\alpha$, then for some $g \in
\cup\{\cF^2_{\beta_\varepsilon}:\varepsilon < \partial\}$ we have $f <
g$ mod $D$.
\end{enumerate}
\mn
[Why?  Recall that $\beta_A(f)$ for $A \in \cP_*$ and $\beta(f)$ are
well defined ordinals $<\partial$ and let $\zeta < \partial$ be such
that $\beta(f) < \beta_\zeta$, exists as $\beta_\varepsilon \ge
\varepsilon$.  As $\bar{\cF}^A$ is $<_{D_A}$-increasing for $A \in \cP_*$
clearly $A \in \cP_* \wedge g \in \cF^A_{\beta_\zeta} \Rightarrow f <
g$ mod $D_A$.  So by the definition of $\cF^1_{\beta_\zeta}$ we have
$A \in \cP_* \wedge g \in \cF^1_{\beta_\zeta} \Rightarrow f < g$ mod
$D_A$ hence $g \in \cF^1_{\beta_\zeta} \Rightarrow f < g$ mod $D_*$.  As
$\cF^2_{\beta_\zeta} \subseteq \cF^1_{\beta_\zeta}$ we are done.]
\mn
\begin{enumerate}
\item[$(*)_{17}$]  if $\zeta < \xi < \partial$ and 
$f \in \cF^2_\zeta$ and $g \in \cF_\xi$ then $f < g$ mod $D_*$.
\end{enumerate}
\mn
[Why?  As in the proof of $(*)_{16}$ but now $\beta(f) = \gamma_\zeta$.]

Together by $(*)_{13} + (*)_{16} + (*)_{17}$ the sequence
$\langle \cF^2_{\beta_\varepsilon}:\varepsilon <
\partial\rangle$ is as required.
\end{PROOF}

\noindent
A central definition here is
\begin{definition}
\label{r17g}
1) For $\bar\alpha \in {}^Y\text{Ord}$
let $J^{\aleph_1\text{\rm -comp}}_{< \lambda}[\bar \alpha] = \{X \subseteq
Y$: ps-pcf$_{\aleph_1-\comp}(\bar \alpha \restriction X) 
\subseteq \lambda\}$.  So for $X \subseteq Y,X \notin
J^{\aleph_1-\comp}_{< \lambda}[\bar\alpha]$ iff there is an
$\aleph_1$-complete filter $D$ on $Y$ such that $X \ne \emptyset$ mod
$D$ and ps-tcf$(\Pi \bar\alpha,<_D)$ is well defined $\ge \lambda$
\Iff \, there is an $\aleph_1$-complete filter $D$ on $Y$ such that
ps-tcf$(\Pi \bar\alpha,<_D)$ is well defined $\ge \lambda$ and $X \in D$.

\noindent
2)  $J^{\aleph_1-\comp}_{\le \lambda}$ is 
$J^{\aleph_1-\comp}_{< \lambda^+}$ and we can use a set $\ga$ of
ordinals instead of $\bar\alpha$.
\end{definition}

\begin{claim}
\label{r18} 
\underline{The Generator Existence Claim} 

\noindent
Let $\bar\alpha \in {}^Y(\text{\rm Ord} \backslash \{0\})$.

\noindent
1)  $J^{< \aleph_1-\comp}_{< \lambda}(\bar \alpha)$ is an
$\aleph_1$-complete ideal on $Y$ for any cardinal $\lambda$ except
that it may be $\cP(Y)$.

\noindent
2) [{\rm AC}$_{{\cP}(Y)}$]  Assume $t \in Y \Rightarrow \text{\rm
cf}(\alpha_t) \ge \hrtg(\cP(Y))$.  If $\lambda \in$ 
{\rm ps-pcf}$_{\aleph_1-\comp}(\bar \alpha)$ 
\then \, for some $X \subseteq Y$ we have
\medskip

\noindent
\begin{enumerate}
\item[$(A)$]   $J^{\aleph_1-\comp}_{< \lambda^+}[\bar \alpha]
= J^{\aleph_1-\comp}_{< \lambda}[\bar \alpha] + X$
\smallskip

\noindent
\item[$(B)$]  $\lambda =$ {\rm ps-tcf}$(\Pi\bar \alpha,
<_{J^{\aleph_1-\comp}_{=\lambda}[\bar \alpha]})$ where
$J^{\aleph_1-\comp}_{=\lambda}[\bar \alpha] := 
J^{\aleph_1-\comp}_{< \lambda}[\bar \alpha] + (Y \backslash X)$
\smallskip

\noindent
\item[$(C)$]  $\lambda \notin$ {\rm ps-pcf}$_{\aleph_1-\comp}
(\bar \alpha \restriction (Y \backslash X))$.
\end{enumerate}
\end{claim}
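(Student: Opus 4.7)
For part (1), the argument is routine. Closure of $J := J^{\aleph_1\text{-comp}}_{<\lambda}[\bar\alpha]$ under subsets is immediate from the definition, since filters are closed under supersets. For $\aleph_1$-additivity, suppose $\{X_n\}_{n<\omega}\subseteq J$ but $X := \bigcup_n X_n \notin J$; a witnessing $\aleph_1$-complete filter $D$ with $X \in D$ and $\text{ps-tcf}(\Pi\bar\alpha, <_D) \ge \lambda$ forces some $X_{n_0}$ to be $D$-positive (else $Y \setminus X = \bigcap_n (Y \setminus X_n) \in D$), and a witness $\bar{\cF}$ for $\text{ps-tcf}(D)$ remains $<_{D + X_{n_0}}$-increasing and cofinal, giving $X_{n_0} \notin J$, a contradiction.

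For part (2), apply Theorem \ref{r16} with $\partial = \lambda$ to obtain the canonical minimal $\aleph_1$-complete filter $D := D^{\bar\alpha}_\lambda$ with $\text{ps-tcf}(\Pi\bar\alpha, <_D) = \lambda$, witnessed by some $\bar{\cF} = \langle\cF_\alpha : \alpha<\lambda\rangle$. I claim it suffices to produce $X \in D \cap J^{\aleph_1\text{-comp}}_{<\lambda^+}[\bar\alpha]$; given such $X$, the three conclusions follow. For (C): any $\aleph_1$-complete filter $E$ with $Y \setminus X \in E$ and $\text{ps-tcf}(E) = \lambda$ lies in $\bbD$, so $D \subseteq E$ by minimality, so $X \in E$, giving $\emptyset \in E$. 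For (A), given $Z \in J^{\aleph_1\text{-comp}}_{<\lambda^+}[\bar\alpha]$ with $Z \setminus X \notin J^{\aleph_1\text{-comp}}_{<\lambda}[\bar\alpha]$, a witnessing $E$ has $Z \setminus X \in E$, $\text{ps-tcf}(E) \ge \lambda$, and $Z \in E$; then $Z \in J^{\aleph_1\text{-comp}}_{<\lambda^+}$ forces $\text{ps-tcf}(E) = \lambda$, and the (C)-argument contradicts. For (B): the dual filter of $J^{\aleph_1\text{-comp}}_{=\lambda}[\bar\alpha] = J^{\aleph_1\text{-comp}}_{<\lambda}[\bar\alpha] + (Y \setminus X)$ is $D^* = \{B : X \setminus B \in J^{\aleph_1\text{-comp}}_{<\lambda}[\bar\alpha]\}$, and $D \subseteq D^*$ (for $A \in D$, a witnessing $E$ to $X \setminus A \notin J^{\aleph_1\text{-comp}}_{<\lambda}$ has $X \in E$, ruled out either by $X \in J^{\aleph_1\text{-comp}}_{<\lambda^+}$ when $\text{ps-tcf}(E)>\lambda$ or by $A \cap (X \setminus A) = \emptyset \in E$ when $\text{ps-tcf}(E)=\lambda$), so $\bar{\cF}$ also witnesses $\text{ps-tcf}(\Pi\bar\alpha, <_{D^*}) = \lambda$.

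The main obstacle is thus constructing $X \in D \cap J^{\aleph_1\text{-comp}}_{<\lambda^+}[\bar\alpha]$. My plan is to apply AC$_{\cP(Y)}$ in a diagonal argument paralleling the proof of \ref{r16}: for each $A \in \cP(Y) \setminus J^{\aleph_1\text{-comp}}_{<\lambda^+}[\bar\alpha]$, select an $\aleph_1$-complete filter $E_A$ with $A \in E_A$ and $\text{ps-tcf}(\Pi\bar\alpha, <_{E_A}) > \lambda$, then combine the $E_A$'s with $\bar{\cF}$ to extract $X$. A technical pitfall is that naively taking the intersection $\bigcap_A E_A$ and its dual does not give $J^{\aleph_1\text{-comp}}_{<\lambda^+}$, because extending a filter can decrease its ps-tcf and so a set may be $E_A$-positive without $E_A+A$ remaining in $\bbE := \{E : \text{ps-tcf}(E) > \lambda\}$. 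The construction must therefore coordinate the selections using the minimality of $D$ and the $\bar{\cF}$-witness, along the lines of steps $(*)_1$--$(*)_{17}$ of the proof of \ref{r16}, to produce a single $X \in D$ that lies outside every $E_A$.
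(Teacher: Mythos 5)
Your part (1) is correct and matches the paper's argument, and your reduction of part (2) to producing a set $X\in D\cap J^{\aleph_1\text{-comp}}_{<\lambda^+}[\bar\alpha]$, where $D=D^{\bar\alpha}_\lambda$ is the canonical filter of Theorem \ref{r16}, is also correct: your derivations of clauses (A), (B), (C) from such an $X$ are essentially the paper's steps $(*)_1$ and $(*)_3$--$(*)_5$. But the existence of this $X$ is precisely what the paper calls ``a major point'' (its $(*)_2$), and at that point your proposal stops being a proof and becomes a plan. You acknowledge an unresolved pitfall in intersecting the chosen filters $E_A$ and defer the fix to an unspecified coordination ``along the lines of $(*)_1$--$(*)_{17}$ of \ref{r16}''. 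That is a genuine gap: all the content of the claim lives in this step, and the argument that fills it is not a variant of the diagonalization in \ref{r16}.

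What the paper actually does for $(*)_2$ is a proof by contradiction with a different index set from yours. Suppose no $X\in D$ belongs to $J^{\aleph_1\text{-comp}}_{<\lambda^+}[\bar\alpha]$. Then for every $X\in D$ (not for every $A\notin J^{\aleph_1\text{-comp}}_{<\lambda^+}[\bar\alpha]$) one chooses by AC$_{\cP(Y)}$ an $\aleph_1$-complete filter $D_X$ with $X\in D_X$ and $\lambda_X:=\text{ps-tcf}(\Pi\bar\alpha,<_{D_X})>\lambda$, and forms $D^*_1=\{A\subseteq Y:$ for some $X_1\in D$, every $X\in D$ with $X\subseteq X_1$ satisfies $A\in D_X\}$, an $\aleph_1$-complete filter extending $D$. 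Since each $(\Pi\bar\alpha,<_{D_X})$ has pseudo true cofinality $\lambda_X>\lambda$, it is pseudo $\lambda^+$-directed (by \cite[5.7(1A)]{Sh:938}), so any $<_D$-increasing $\lambda$-sequence $\langle\cF_\alpha:\alpha<\lambda\rangle$ has a $<_{D_X}$-upper bound $h_X$ for each $X\in D$; taking $h=\sup\{h_X:X\in D\}$, which lies in $\Pi\bar\alpha$ because $\cf(\alpha_t)\ge\hrtg(\cP(Y))$, gives a $<_{D^*_1}$-upper bound. Applied to the witness for $\lambda=\text{ps-tcf}(\Pi\bar\alpha,<_D)$ this contradicts cofinality, since $D\subseteq D^*_1$. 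Note that this route never needs to exhibit an $X$ lying ``outside every $E_A$''; it only refutes the assumption $D\cap J^{\aleph_1\text{-comp}}_{<\lambda^+}[\bar\alpha]=\emptyset$. To complete your proposal, replace your extraction plan by this contradiction argument.
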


\begin{remark}
\label{r18d}
Recall that if AC$_{\cP(Y)}$ \then \, \wilog \, 
AC$_{\aleph_0}$ holds. Why?  Otherwise by
AC$_{\cP(Y)}$ we have $Y$ is well ordered and AC$_Y$ hence $|Y| = n$
for some $n < \omega$ and in this case our claims are obvious,
e.g. \ref{r18}(2), \ref{r19}.
\end{remark}

\begin{PROOF}{\ref{r18}}   1)  If not then we can find a sequence
$\langle A_n:n < \omega\rangle$ of members of
$J^{\aleph_1-\comp}_{< \lambda}[\bar\alpha]$ such that their
union $A := \cup\{A_n:n < \omega\}$ does not belong to it.  As $A
\notin J^{\aleph_1-\comp}_{< \lambda}[\bar\alpha]$, by the
definition there is an $\aleph_1$-complete filter $D$ on $Y$ such that
$A \ne \emptyset$ mod $D$ and ps-tcf$(\Pi \bar\alpha,<_D)$ is well
defined, so let it be $\mu = \cf(\mu) \ge \lambda$ and let
$\langle \cF_\alpha:\alpha < \lambda\rangle$ exemplify it.  

As $D$ is $\aleph_1$-complete and $A = \cup\{A_n:n < \omega\} \ne
\emptyset$ mod $D$ necessarily for some $n,A_n \ne \emptyset$ mod $D$
but then $D$ witness $A_n \notin 
J^{\aleph_1-\comp}_{< \lambda}[\bar\alpha]$, contradiction.

\noindent
2) Recall $\lambda$ is a regular cardinal by
\cite[5.8(0)]{Sh:938} and $\lambda \ge \hrtg({\cP}(Y))$ by \ref{r15}. 

Let $D = D^{\bar \alpha}_\lambda$ be as in \cite[5.19]{Sh:938}
when DC holds, and as in \ref{r16} in general,
i.e. $\Pi \bar\alpha/D$ has pseudo true cofinality $\lambda$ and $D$
contains any other such $\aleph_1$-complete 
filter on $Y$.  Now if $X \in D^+$ then $\lambda =$ 
{\rm ps-tcf}$_{\aleph_1-\comp}(\bar \alpha \restriction X,<_{(D+X) \cap
{\cP}(X)})$ hence $X \notin J^{\aleph_1-\comp}_{< \lambda} [\bar \alpha]$, so
\medskip

\noindent
\begin{enumerate}
\item[$(*)_1$]   $X \in J^{\aleph_1-\comp}_{< \lambda}[\bar \alpha] 
\Rightarrow X = \emptyset$ mod $D$.
\end{enumerate}
\mn
A major point is
\medskip

\noindent
\begin{enumerate}
\item[$(*)_2$]    some $X \in D$ belongs to
$J^{\aleph_1-\comp}_{< \lambda^+}[\bar \alpha]$.
\end{enumerate}
\mn
Why $(*)_2$?  The proof will take awhile; assume that not,  
we have AC$_{{\cP}(Y)}$ hence AC$_D$, so we can
find $\langle (\bar{\cF}^X,D_X,\lambda_X):X \in D\rangle$ such that:
\medskip

\noindent
\begin{enumerate}
\item[$(a)$]   $\lambda_X$ is a regular cardinal $\ge \lambda^+$,
i.e. $> \lambda$
\smallskip

\noindent
\item[$(b)$]  $D_X$ is an $\aleph_1$-complete filter on $Y$ such
that $X \in D_X$ and 

$\lambda_X =$ {\rm ps-tcf}$(\Pi \bar \alpha,<_{D_X})$
\smallskip

\noindent
\item[$(c)$]  $\bar{\cF}^X = \langle {\cF}^X_\alpha:
\alpha < \lambda_X\rangle$ exemplifies that $\lambda_X =$ {\rm ps-tcf}
$(\Pi \bar \alpha,<_{D_X})$
\smallskip

\noindent
\item[$(d)$]  moreover $\bar{\cF}^X$ is as in \cite[5.17(2)]{Sh:938}.
\end{enumerate}
\mn
Let
\medskip

\noindent
\begin{enumerate}
\item[$(e)$]  $D^*_1 = \{A \subseteq Y$: for some $X_1 \in D$ we
have $X \in D \wedge X \subseteq X_1 \Rightarrow A \in D_X\}$.
\end{enumerate}
\mn
Clearly
\medskip

\noindent
\begin{enumerate}
\item[$(f)$]  $D^*_1$ is an $\aleph_1$-complete filter on $Y$
extending $D$.
\end{enumerate}
\medskip

\noindent
[Why?  First, clearly $D^*_1 \subseteq \cP(Y)$ and $\emptyset \notin D^*_1$ as
 $X \in D \Rightarrow \emptyset \notin D_X$.  Second,
 if $A \in D$ then $X \in D \wedge X \subseteq A
\Rightarrow A \in D_X$ by clause (b) 
hence choosing $X_1 = A$ the demand for $``A \in
D^*_1"$ holds so indeed $D \subseteq D^*_1$.  Third, assume 
$\bar A = \langle A_n:n < \omega\rangle$ and $``A_n \in
D^*_1"$ for $n < \omega$, \underline{then} for each $A_n$ there is a witness 
$X_n \in D$, so by AC$_{\aleph_0}$, recalling \ref{r18d}, 
there is an $\omega$-sequence $\langle X_n:n
<\omega\rangle$ with $X_n$ witnessing $A_n \in D^*_1$.  Then 
$X = \cap\{X_n:n < \omega\}$ belongs to $D$ and witness
that $A := \cap\{A_n:n < \omega\} \in D^*_1$ because $D$ is
$\aleph_1$-complete.  Fourth, if $A \subseteq B \subseteq Y$ and $A
 \in D^*_1$, then some $X_1$ witness $A \in D^*_1$, i.e. $X \in D
 \wedge X \subseteq X_1 \Rightarrow A \in D_X$; but then $X_1$ witness
 also $B \in D^*_1$.]
\medskip

\noindent
\begin{enumerate}
\item[$(g)$]   assume $\langle \cF_\alpha:\alpha < \lambda\rangle$ is
$<_D$-increasing in $\Pi \bar\alpha$, i.e. $\alpha < \lambda \Rightarrow
\cF_\alpha \subseteq \Pi \bar\alpha$ and
$\alpha_1 < \alpha_2 \wedge f_1 \in \cF_{\alpha_1} \wedge f_2 \in
\cF_{\alpha_1} \Rightarrow f_1 <_D f_2$ 
and $\cF_\alpha \ne \emptyset$
for every or at least unboundedly many $\alpha < \lambda$
\underline{then}
$\bigcup\limits_{\alpha < \lambda} \cF_\alpha$ has a common
$<_{D^*_1}$-upper bound.
\end{enumerate}
\medskip

\noindent
[Why?  For each $X \in D$ recall $(\Pi \bar\alpha,<_{D_X})$ has true
cofinality $\lambda_X$ which is regular 
$> \lambda$ hence by \cite[5.7(1A)]{Sh:938}
is pseudo $\lambda^+$-directed
hence there is a common $<_{D_X}$-upper bounded $h_X$ of
$\cup\{\cF_\alpha:\alpha < \lambda\}$.  As we have AC$_{\cP(Y)}$ we
can find a sequence $\langle h_X:X \in D\rangle$ with each $h_X$ as
above.   Define $h \in \Pi \bar\alpha$ by $h(t) = 
\sup\{h_X(t):X \in D\}$, it belongs to $\Pi \bar\alpha$ as
we are assuming $t \in Y \Rightarrow \cf(\alpha_t) \ge
\hrtg(\cP(Y))$.  So $h \in \Pi \bar\alpha$ is a $<_{D_X}$-upper bound
of $\cup\{\cF_\alpha:\alpha < \lambda\}$ for every $X \in D$, hence 
by the choice of $D^*_1$ it is
a $<_{D^*_1}$-upper bound of $\cup\{\cF_\alpha:\alpha < \lambda\}$.]

But by the choice of $D$ in the beginning of the proof we have
$\lambda =$ ps-tcf$(\Pi \bar \alpha,<_D)$ so there is a sequence $\langle
\hat{\cF}_\alpha:\alpha < \lambda\rangle$ witnessing it.   By clauses
(f) + (g) we have $D \subseteq D^*_1$ so 
clearly $\langle \hat\cF_\alpha:\alpha <
\lambda\rangle$ is $<_{D^*_1}$-increasing hence we can apply clause (g) to the
sequence $\langle \hat{\cF}_\alpha:\alpha < \lambda\rangle$ and got 
a $<_{D^*_1}$-upper bound $f \in \Pi\bar \alpha$, contradiction to the
choice of $\langle \hat{\cF}_\alpha:\alpha < \lambda\rangle$ because
$D \subseteq D^*_1$.  So $(*)_2$ really holds.

Choose $X$ as in $(*)_2$, now
\medskip

\noindent
\begin{enumerate}
\item[$(*)_3$]  $D = \text{ dual}(J^{\aleph_1-\comp}_{< \lambda}
[\bar\alpha] + (Y \backslash X))$.
\end{enumerate}
\medskip

\noindent
[Why?  The inclusion $\supseteq$ holds by $(*)_1$ and $(*)_2$, i.e. the
choice of $X$ as a member of $D$.  Now for every
$Z \subseteq X$ which does not belong to 
$J^{\aleph_1-\comp}_{< \lambda}[\bar\alpha]$,
by the definition of $J^{\aleph_1-\comp}_{< \lambda}[\bar\alpha]$ there is an
$\aleph_1$-complete filter $D_Z$ on $Y$ to which $Z$ belongs such that
$\theta :=$ {\rm ps-cf}$(\Pi \bar\alpha,<_D)$ is well defined and $\ge
\lambda$.  But $\theta \ge \lambda^+$ is impossible as we know that $Z
\subseteq X \in J^{\aleph_1\text{-comp}}_{< \lambda^+}[\bar\alpha]$, so 
necessarily $\theta = \lambda$,
hence by the choice of $D$ by using \ref{r16} we have 
$D \subseteq D_Z$, hence $Z \ne \emptyset$
mod $D$.  Together we are done.]
\medskip

\noindent
\begin{enumerate}
\item[$(*)_4$]   $\lambda =$ {\rm ps-tcf}$(\Pi
\bar\alpha,<_{J^{\aleph_1-\comp}_{=\lambda}})$, see clause (B)
of the conclusion of \ref{r18}(2).
\end{enumerate}
\medskip

\noindent
[Why?  By $(*)_3$, the choice of 
$J^{\aleph_1-\comp}_{= \lambda}[\bar\alpha]$ 
and as $\lambda =$ {\rm ps-tcf}$(\Pi \bar \alpha,<_D)$.]
\medskip

\noindent
\begin{enumerate}
\item[$(*)_5$]  $\lambda \notin$ {\rm ps-pcf}$_{\aleph_1-\comp}(\bar\alpha 
\restriction (Y \backslash X))$.
\end{enumerate}
\medskip

\noindent
[Why?  Otherwise there is an $\aleph_1$-complete filter $D'$ on $Y$
such that $Y \backslash X \in D$ and $\lambda = \text{ ps-tcf}(\Pi \bar
\alpha,<_{D'})$.  But this contradicts the choice of $D$ by using \ref{r16}.]

So $X$ is  as required in the desired conclusion of \ref{r18}(2):  
clause (B) by $(*)_4$, clause (C) by $(*)_5$ and clause (A) follows.
Note that the notation
$J^{\aleph_1-\comp}_{=\lambda}[\bar\alpha]$ is justified, as if
$X'$ satisfies the requirements on $X$ then $X' = X$ mod
$J^{\aleph_1-\comp}_{< \lambda}[\bar\alpha]$.
\end{PROOF}

\begin{conclusion}
\label{r19} 
[$\text{AC}_{\cP(Y)}$]  Assume $\bar\alpha \in
{}^Y\text{Ord}$ and each $\alpha_t$ a limit ordinal of cofinality $\ge
\hrtg(\cP(Y))$ and $\ps-\pcf_{\aleph_1-\text{comp}}(\bar\alpha)$ is
not empty.

\noindent
1) If $t \in Y \Rightarrow \cf(\alpha_t) \ge
\hrtg(\text{Fil}^1_{\aleph_1}(Y))$ \then \, there is a function $h$ such that:
\mn
\begin{enumerate}
\item[$\bullet_1$]   the domain of $h$ is $\cP(Y)$
\sn
\item[$\bullet_2$]   the range of $h$ is 
ps-pcf$_{\aleph_1-\comp}(\bar\alpha) \cup \{\mu:\mu = \sup(\mu \cap$
{\rm ps-pcf}$_{\aleph_1-\comp}(\bar\alpha))$ and $\mu$ has cofinality
$\aleph_0$ or just for some $A \in \cP(Y) \backslash
J^{\aleph_1-\text{comp}}_{< \mu}[\bar\alpha]$ there is no
$\aleph_1$-complete filter $D$ on $Y$ such that $A \in D$ and
$(\forall B \in J^{\aleph_1-\text{comp}}_{< \mu}[\bar\alpha])(Y
\backslash B \in D$ and $(\Pi\bar\alpha,<_D)$ has pseudo true
cofinality$\}$, but see $\bullet_5$   
\sn
\item[$\bullet_3$]   $A \subseteq B \subseteq Y \Rightarrow h(A) \le
h(B)$
\sn
\item[$\bullet_4$]   $h(A) = \text{ min}\{\lambda:A \in
J^{\aleph_1-\comp}_{\le \lambda}[\bar\alpha]\}$
\sn
\item[$\bullet_5$]   if $h(A) = \lambda$, cf$(\lambda) > \aleph_0$
then $\lambda \in$ {\rm ps-tcf}$_{\aleph_1-\comp}(\bar\alpha)$, i.e.
for some $\aleph_1$-complete filter $D$ on $Y$ we have $A \in D$ and
ps-tcf$(\Pi \bar\alpha,<_D) = \lambda$
\sn
\item[$\bullet_6$]   the set ps-pcf$_{\aleph_1-\comp}(\bar \alpha)$ has
cardinality $< \hrtg({\cP}(Y))$
\sn
\item[$\bullet_7$]  if $h(A) = \lambda$ and cf$(\lambda) = \aleph_0$
then we can find a sequence $\langle A_n:n < \omega\rangle$ such that
$A = \cup\{A_n:n < \omega\}$ and $h(A_n) < \lambda$ for $n < \omega$
\sn
\item[$\bullet_8$]  $J^{\aleph_1-\comp}_{< \lambda}[\bar\alpha] 
= \{A \subseteq Y:h(A) < \lambda\}$ when cf$(\lambda) > \aleph_0$
\sn
\item[$\bullet_9$]  if cf(otp(ps-pcf$_{\aleph_1-\comp}(\bar\alpha))) >
\aleph_0$ then ps-pcf$_{\aleph_1-\comp}(\bar\alpha)$ has a last member.
\end{enumerate}
\mn
2) Without the extra assumption of part (1), still there is $h$ such
that
\mn
\begin{enumerate}
\item[$\bullet_1$]   $h$ is a function with domain  $\cP(Y)$
\sn
\item[$\bullet_2$]   the range of $h$ is ps-pcf$_{\aleph_1-\comp}
(\bar\alpha) \cup \{\mu:\mu = \sup(\mu \cap$ 
{\rm ps-pcf}$_{\aleph_1-\comp}(\bar\alpha))$ and cf$(\mu) =
\aleph_0$ or just cf$(\mu) < \hrtg(${\rm ps-pcf}$_{\aleph_1-\comp}
(\bar\alpha)$ and $J^{\aleph_1-\comp}_{< \mu}[\bar\alpha] \ne
\cup\{J^{\aleph_1-\comp}_{< \chi}[\bar\alpha]:\chi < \mu\}\}$
\sn
\item[$\bullet_3$]   $A \subseteq B \subseteq Y \Rightarrow h(A) \le h(B)$
\sn
\item[$\bullet_4$]   $h(A) = \text{ min}\{\lambda:A \in
J^{\aleph_1-\comp}_{\le \lambda}[\bar\alpha]\}$
\sn
\item[$\bullet_5$]   if $h(A)$ and cf$(\lambda) \ge \hrtg($ 
{\rm ps-pcf}$_{\aleph_1-\comp}[\bar\alpha])$ \then \,
$\lambda \in$ {\rm ps-pcf}$_{\aleph_1-\comp}(\bar\alpha)$,
i.e. there is an $\aleph_1$-complete filter $D$ on $Y$ such that 
$(\Pi\bar\alpha,<_D)$ has true cofinality $\lambda$
\sn
\item[$\bullet_6$]   as above
\sn
\item[$\bullet_7$]  as above.
\end{enumerate}
\mn
3) In part (1), if also AC$_\alpha$ holds for $\alpha < \hrtg(\cP(Y))$
\then \, we can find a sequence $\langle X_\lambda:\lambda \in$ 
{\rm ps-pcf}$_{\aleph_1-\comp}(\bar\alpha)\rangle$ of subsets of $Y$
such that for every cardinality $\mu,J^{\aleph_1-\comp}_{<
\mu}[\bar\alpha]$ is the $\aleph_1$-complete ideal on $Y$ generated by
$\{X_\lambda:\lambda < \mu$ and $\lambda \in$ 
{\rm ps-pcf}$_{\aleph_1-\comp}(\bar\alpha)\}$. 
\end{conclusion}

\begin{PROOF}{\ref{r19}}   
1) Let $\Theta =$ {\rm ps-pcf}$_{\aleph_1-\comp}(\bar\alpha)$.   
We define function $h$ from ${\cP}(Y)$ into
$\Theta^+$ which is defined as the closure of $\Theta \cup \{0\}$,
i.e. $\Theta \cup \{\mu:\mu = \sup(\mu \cap \Theta)\}$, by 
$h(X) = \text{ Min}\{\lambda \in \Theta^+:X \in 
J^{\aleph_1-\comp}_{\le \lambda}[\bar\alpha]\}$.  It is well defined as
ps-pcf$_{\aleph_1-\comp}(\bar\alpha)$ is a set (see
\cite[5.8(2)]{Sh:938}), non-empty by an assumption 
and $J^{\aleph_1-\comp}_{\le \lambda}
[\bar\alpha] = \cP(Y)$ when $\lambda \ge 
\text{ sup(ps-pcf}_{\aleph_1-\comp}(\bar\alpha))$.  This function,
its range is included in $\Theta^+$, but otp$(\Theta^+)
\le \otp(\Theta) +1$; also clearly $\bullet_1$ of the 
conclusion holds.  Also if $\lambda \in \Theta$ and $X$ is as in
\ref{r18}(2) then $h(X) = \lambda$; so 
$h$ is a function from $\cP(Y)$ into $\Theta^+$
and its range include $\Theta$ hence $|\Theta| < \hrtg(\cP(Y))$ so
$\bullet_6$ holds.  Now first by \ref{r15} we have 
$\theta \in \Theta \backslash \{0\} \Rightarrow
\theta \ge \hrtg(\cP(Y))$, hence $\theta \in \Theta \backslash \{0\}
\Rightarrow \theta > \sup(\Theta \cap \theta)$ so the range of $h$ is
as required in $\bullet_2$.

Second, if $\lambda \in \Theta^+$ 
and cf$(\lambda) = \aleph_0$ then clearly
$\lambda \in \Theta^+ \backslash \Theta$ and we can find an increasing
sequence $\langle \lambda_n:n < \omega\rangle$ of members of
ps-pcf$_{\aleph_1\text{-comp}}(\bar\alpha)$ with limit $\lambda$.  For
each $n$ there is $X_n \in J^{\aleph_1-\comp}_{\le
\lambda_n}[\bar\alpha] \backslash
J^{\aleph_1\text{-comp}}_{<\lambda_n}[\bar\alpha]$ by \ref{r18}(2),
but AC$_{\aleph_0}$ holds, see \ref{r18d} 
hence such a sequence $\langle X_n:n <
\omega\rangle$ exists.  Easily $A := \cup\{X_n:n < \omega\} \in
\cP(Y)$ satisfies $h(A) = \lambda$ hence 
$\lambda \in \text{ Rang}(h)$.   Third, if $\lambda = \sup($
{\rm ps-pcf}$_{\aleph_1-\comp}(\bar\alpha))$ and 
cf$(\lambda) > \aleph_0$, then $\bigcup\limits_{\mu <\lambda}
J_{< \mu}[\bar\alpha] \ne \cP(Y)$ while
$J_{< \lambda^+}(\bar\alpha) = \cP(Y)$
so $h(Y) = \lambda$.

Fourth, assume $\lambda = h(A),\lambda \notin$ 
{\rm ps-pcf}$_{\aleph_1-\comp}(\bar\alpha)$ and cf$(\lambda) >
\aleph_0$, we can find $\langle \lambda_i:i < \cf(\lambda)\rangle$, an
increasing sequence with limit $\lambda$, but
by the definition of $h$ necessarily $\lambda \cap$ 
{\rm ps-pcf}$_{\aleph_1-\comp}(\bar\alpha)$ is an unbounded subset of
$\lambda$ so \wilog \, all are members of
ps-pcf$_{\aleph_1-\comp}(\Pi \bar \alpha)$.
Now $\langle J_i := J^{\aleph_1-\comp}_{< \lambda_i}[\bar\alpha]:i
<  \cf(\lambda)\rangle$ is a $\subseteq$-increasing sequence of
$\aleph_1$-complete ideals on $Y$, no choice is needed, and by 
our present assumption
$\aleph_0 < \cf(\lambda)$ hence the union $J = \cup\{J_i:i <
\text{ cf}(\lambda)\}$ is an $\aleph_1$-complete ideal on $Y$ and
obviously $A \notin J$.  So also $D_1 = \text{ dual}(J) +A$ is an
$\aleph_1$-complete filter hence by \cite[5.9]{Sh:938}
for some $\aleph_1$-complete ideal
$D_2$ extending $D_1$ we have $\mu =$ {\rm ps-tcf}$(\Pi \alpha,<_{D_2})$
is well defined, so by \ref{r18}(2) we have some $D_2 \cap
J^{\aleph_1-\comp}_{\le \mu}[\bar\alpha] \ne \emptyset$ but
$\emptyset = D_2 \cap J_i = D_2 \cap J^{\aleph_1-\comp}_{<
\lambda_1}[\bar\alpha]$ hence $\mu \ge \lambda_i$.  
Hence $\mu \ge \lambda_i$ for every $i < \cf(\lambda)$ 
but $\lambda$ is singular so $\mu > \lambda$ and $\mu
\in$ {\rm ps-pcf}$_{\aleph_1-\comp}(\bar\alpha)$.  Hence $\chi := 
\min($ {\rm ps-pcf}$_{\aleph_1-\comp}(\bar\alpha)
\backslash \lambda)$ is well defined and
$J^{\aleph_1-\comp}_{< \chi}[\bar\alpha] = J$.  But as $h(A) < \chi$ 
we get that $A \in J^{\aleph_1-\comp}_{< \chi}
[\bar\alpha]$, contradiction.

So we have proved $\bullet_5$,
the fifth clause of the conclusion and so 
Rang$(h) \subseteq \ps-\pcf_{\aleph_1-\comp}(\bar\alpha) \cup
\{\mu:\cf(\mu) = \aleph_0$ and $\mu = \sup(\mu \cap$ 
{\rm ps-pcf}$_{\aleph_1-\comp}(\bar\alpha)\}$.
The other clauses follow from the properties of $h$.  

\noindent
2),3)  Similar proof.
\end{PROOF}

\begin{definition}
\label{r20} 
Assume cf$(\mu) < \hrtg(Y)$ and $\mu$ is singular of uncountable
cofinality limit of regulars.  We let

\begin{equation*}
\begin{array}{clcr}

(a) \quad 
\text{ pp}^*_Y(\mu) = \sup\{\lambda:&\text{ for some } \bar\alpha,D 
\text{ we have} \\
  &(a) \quad \lambda =$ {\rm ps-tcf}$(\Pi \bar\alpha,<_D), \\
  &(b) \quad D \text{ is an } \aleph_1-\text{complete filter on } Y \\
  &(c) \quad \bar\alpha = \langle \alpha_t:t \in Y\rangle, 
\text{ each } \alpha_t \text{ regular} \\
  &(d) \quad \mu = \text{ lim}_D \bar \alpha\}
\end{array}
\end{equation*}

\hskip30pt $(b) \quad \text{ pp}^+_Y(\mu) = 
\sup\{\lambda^+:\lambda \text{ as above}\}$.

\hskip30pt $(c) \quad \text{ similarly pp}^*_{\kappa-\comp,Y}(\mu),
\text{pp}^+_{\kappa-\comp,Y}(\mu)$ restricting ourselves 

\hskip50pt to $\kappa$-complete filters $D$; similarly for other properties

\hskip30pt $(d) \quad$ we can replace $Y$ by an $\aleph_1$-complete
filter $D$ on $Y$, this means 

\hskip50pt we fix $D$ but not $\bar\alpha$ above.
\end{definition}

\begin{remark}
\label{r20d}
1) of course, if we consider sets $Y$ such that $\AC_Y$ may fail, it is
   natural to omit the regularity demands, so $\bar \alpha$ is just a
   sequence of ordinals.

\noindent
2) We may use $\bar\alpha$ a sequence of cardinals, not necessarily
   regular; see \S3. 
\end{remark}

\begin{conclusion}
\label{r21}  
[DC + AC$_{\cP(Y)}$]
Assume $\theta = \hrtg(\cP(Y)) < \mu,\mu$ is as in
Definition \ref{r20}, $\mu_0 < \mu$ and $\bar\alpha \in {}^Y(\Reg \cap
\mu^+_0) \wedge \ps-\pcf_{\aleph_1-\comp}(\bar\alpha) \ne \emptyset
\Rightarrow \ps-\pcf_{\aleph_1-\comp}(\bar\alpha) \subseteq \mu$.  If
$\sigma = |\text{Reg } \cap \mu \backslash
\mu_0| < \mu$ and $\kappa = |\text{Reg } \cap 
\text{ pp} ^+_Y(\mu) \backslash \mu_0|$ \then \, 
$\kappa < \hrtg(\theta \times {}^Y \sigma)$.
\end{conclusion}

\begin{remark}
In the ZFC parallel the assumption on $\mu_0 < \mu$ is not necessary.
\end{remark}

\begin{PROOF}{\ref{r21}}
Obvious by Definition \cite[5.6]{Sh:938}
noting Conclusion \ref{r19} above and \ref{r22} below.  That is,
letting $\Lambda = \text{ Reg } \cap \mu \backslash \mu_0$, for every
$\bar\alpha \in {}^Y \Lambda$ by \ref{r20} the set
ps-pcf$_{\aleph_1-\comp}(\bar\alpha)$ is a subset of $\Xi :=
\text{ Reg } \cap \text{\rm pp}^+_Y(\mu) \backslash \mu_0$, and by
claim \ref{r19} it is a set of cardinality $<
\hrtg(\cP(Y))$.  By Claim \ref{r22} below we have $\Xi =
\cup\{\ps-\pcf_{\aleph_1-\comp}(\bar\alpha):\bar\alpha \in
{}^Y \Lambda\}$.  So there is a function $h$ with domain
$\hrtg(\cP(Y)) \times {}^Y \sigma$ such that $\varepsilon <
\hrtg(\cP(Y)) \wedge \bar\alpha \in {}^Y \sigma \Rightarrow
(h(\varepsilon,\bar\alpha)$ is the $\varepsilon$-th member of
ps-pcf$_{\aleph_1-\comp}(\bar\alpha)$ if there is one,
min$(\Lambda)$ otherwise).  So $h$ is a function from $\hrtg(\cP(Y))
\times {}^Y \sigma$ onto the set $\Xi$ of cardinality $\kappa$, so we are done.
\end{PROOF}

\begin{claim}
\label{r22}
\underline{The No Hole Claim}[DC]  

\noindent
1) If $\bar\alpha \in {}^Y\text{\rm Ord}$ 
and $\lambda_2 \in$ {\rm ps-pcf}$_{\aleph_1-\comp}(\bar\alpha)$, for 
transparency $t \in Y \Rightarrow \alpha_t
> 0$ and $\hrtg({\cP}(Y)) \le \lambda_1 = \cf(\lambda_1) < 
\lambda_2$, \then \, for some 
$\bar\alpha' \in \Pi\bar \alpha$ we have $\lambda_1 =$ 
{\rm ps-pcf}$_{\aleph_1-\comp}(\bar\alpha')$.

\noindent
2) In part (1), if in addition AC$_Y$ \then \, \wilog \, $\bar\alpha' \in
{}^Y \Reg$. 

\noindent
3) If in addition {\rm AC}$_{\cP(Y)} + \text{\rm AC}_{< \kappa}$ \then
\, even witnessed by the same filter (on $Y$).
\end{claim}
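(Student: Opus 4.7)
My plan is to adapt the classical ``no-hole'' argument to the pseudo-cofinality setting, using the refined witness sequences provided by \cite[5.17]{Sh:938}.

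For part (1), I would first fix an $\aleph_1$-complete filter $D_2$ on $Y$ with $\lambda_2 = \text{ps-tcf}(\Pi\bar\alpha, <_{D_2})$, witnessed by $\bar\cF = \langle \cF_\alpha : \alpha < \lambda_2\rangle$ with each $\cF_\alpha \subseteq \Pi\bar\alpha$ nonempty (DC, plus the AC$_{\aleph_0}$ it yields via \ref{r18d}, suffices for this refinement). I set $\alpha'(t) = \sup\{f(t)+1 : f \in \cF_\xi,\ \xi < \lambda_1\} \le \alpha_t$, which requires no choice. The plan is then: (i) show that $Y_0 := \{t : \alpha'(t) = \alpha_t\}$ is $\emptyset$ mod $D_2$; (ii) redefine $\alpha'$ on $Y_0$ (e.g.\ set $\alpha'(t) = 1$ when $\alpha_t \ge 2$, shrinking $Y$ by a $D_2$-null set otherwise) so that $\bar\alpha' \in \Pi\bar\alpha$; and (iii) verify that $\langle \cF_\xi : \xi < \lambda_1\rangle$ witnesses $\lambda_1 = \text{ps-tcf}(\Pi\bar\alpha', <_{D_2})$. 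Increasingness is automatic, and (i) will follow from (iii): if $Y_0$ were $D_2$-positive, the same cofinality argument applied to $D_2 + Y_0$ would exhibit $\bar\cF \restriction \lambda_1$ as $<_{D_2+Y_0}$-cofinal in $\Pi\bar\alpha$, contradicting the fact that $\lambda_2 = \text{ps-tcf}(\Pi\bar\alpha, <_{D_2+Y_0})$ (inherited from $D_2$) together with uniqueness of ps-tcf.

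The main obstacle is the cofinality verification in step (iii): given $g \in \Pi\bar\alpha'$, produce some $\xi^* < \lambda_1$ with $g <_{D_2} f$ for every $f \in \cF_{\xi^*}$. I would define $\xi(t) = \min\{\xi < \lambda_1 : (\exists f \in \cF_\xi)\ f(t) > g(t)\}$, well-defined since $g(t) < \alpha'(t)$. The image $\{\xi(t) : t \in Y\}$ is a subset of $\lambda_1$ obtained as a surjective image of $Y$; by the preimage injection $\eta \mapsto \xi^{-1}\{\eta\}$ its order type embeds into $\cP(Y)$, hence is less than $\hrtg(\cP(Y)) \le \lambda_1$, and by regularity of $\lambda_1$ the image is bounded by some $\xi^* < \lambda_1$. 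The genuinely delicate step is converting the pointwise ``$\exists f_t \in \cF_{\xi(t)}$ with $f_t(t) > g(t)$'' statements into a single mod-$D_2$ bound via $\cF_{\xi^*}$, since one cannot choose such $f_t$ coherently as a function of $t$ without AC$_Y$; here I expect to exploit the $<_{D_2}$-increasingness of $\bar\cF$, $\aleph_1$-completeness of $D_2$, and the $\aleph_0$-continuity built into the refined witness from \cite[5.17]{Sh:938} to transfer the pointwise information to a statement mod $D_2$.

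For part (2), AC$_Y$ lets one choose coordinatewise a cofinal map $\cf(\alpha'(t)) \to \alpha'(t)$, transferring the witness to $\bar\beta = \langle \cf(\alpha'(t)) : t \in Y\rangle \in {}^Y\Reg$. For part (3), tracking the construction of Part (1) shows that $D_2$ itself already serves as the filter witnessing $\lambda_1 \in \text{ps-pcf}_{\aleph_1-\comp}(\bar\alpha')$; the additional AC$_{\cP(Y)} + \text{AC}_{<\kappa}$ supplies the choices needed to keep the witnessing filter equal to $D_2$ after the refinements in (ii) and the regularization in part (2).
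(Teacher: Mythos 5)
There is a genuine gap at the heart of your part (1), namely step (iii). You take $\bar\alpha'$ to be the pointwise supremum of $\bigcup\{\cF_\xi:\xi<\lambda_1\}$ and claim that $\langle\cF_\xi:\xi<\lambda_1\rangle$ remains cofinal in $(\Pi\bar\alpha',<_{D_2})$ for the \emph{original} filter $D_2$. This is false in general: the pointwise sup is an upper bound of the sequence but need not be an exact upper bound, and a $<_{D}$-increasing sequence need not be cofinal below its pointwise sup. (Morally this is the same phenomenon as a $<^*$-increasing, bounded, non-dominating $\omega_1$-sequence in ${}^\omega\omega$: the sup of the values at each coordinate is $\omega$, yet some $g$ dominates the whole sequence while still lying in the product.) The ``delicate step'' you defer --- converting the pointwise data $\xi(t)$ into a single bound mod $D_2$ --- is not a bookkeeping issue about choosing the $f_t$ coherently; it is exactly the point where the statement you are trying to prove breaks down, and no amount of $\aleph_0$-continuity or $\aleph_1$-completeness rescues it. A structural warning sign is part (3) of the claim itself: it asserts that \emph{with additional choice} one can arrange the witnessing filter to be the same one, which only makes sense if in part (1) the filter must in general be changed --- your proposal would make part (3) vacuous.

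The paper's proof goes differently and you should follow that route. By DC (Claim \ref{r24}(1)) the set $\bigcup\{\cF_\xi:\xi<\lambda_1\}$ has a $<_{D}$-\emph{minimal} upper bound $f^*$ in $\Pi\bar\alpha$ (not the pointwise sup). Then \ref{r24}(2) constructs an $\aleph_1$-complete filter $D_1=D_{f^*,\bar\cF}$ \emph{extending} $D$ such that, since $\lambda_1=\cf(\lambda_1)\ge\hrtg(\cP(Y))$, $f^*$ is a $<_{D_1}$-exact upper bound of $\langle\cF_\xi:\xi<\lambda_1\rangle$, whence $(\prod_{y}f^*(y),<_{D_1})$ has pseudo true cofinality $\lambda_1$; one takes $\bar\alpha'=f^*$ and the witnessing filter is $D_1$, not $D$. (Alternatively, when $\hrtg(\text{Fil}^1_{\aleph_1}(Y))\le\lambda_1$, the paper notes one can find $f$ with $\rk_D(f)=\lambda_1$ and invoke \cite[5.9]{Sh:938}, again at the cost of extending the filter.) Your parts (2) and (3) are then salvageable in spirit --- (2) via AC$_Y$ as you say, and (3) precisely via \ref{r24}(3), which is where AC$_{\cP(Y)}$ and AC$_{<\kappa}$ are used to get an exact upper bound with respect to $D$ itself.
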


\begin{PROOF}{\ref{r22}}
1) Let $D$ be an $\aleph_1$-complete filter on $Y$
such that $\lambda_2 =$ {\rm ps-tcf}$(\Pi \bar\alpha,<_D)$, let
$\langle {\cF}_\alpha:\alpha < \lambda_2\rangle$ exemplify this.  

First assume $\hrtg(\text{Fil}^1_{\aleph_1}(Y)) \le \lambda_1$, clearly $f
\in \cF_\alpha \Rightarrow \rk_D(f) \ge \alpha$ for every
$\alpha < \lambda_2$, hence in particular for $\alpha = \lambda_1$ hence
there is $f \in {}^Y\text{Ord}$ such that rk$_D(f) = \lambda_1$ and
now use \cite[5.9]{Sh:938} but there we change the filter $D$, 
(extend it).  In general, i.e. without this extra
assumption, use \ref{r24}(1),(2) below.

\noindent
2) Easy, too.

\noindent
3) Similarly using \ref{r24}(3) below.
\end{PROOF}

\begin{claim}
\label{r24}
Assume $D \in \text{\rm Fil}^1_\kappa(Y),\kappa > \aleph_0,\cF_\alpha
\subseteq {}^Y\text{\rm Ord}$ non-empty 
for $\alpha < \delta$ and $\bar{\cF} = \langle
\cF_\alpha:\alpha < \delta\rangle$ is $<_D$-increasing, $\delta$ a
limit ordinal.

1) {\rm [DC]} There is $f^* \in \Pi \bar\alpha$ which satisfies $f \in 
\cup\{{\cF}_\alpha:\alpha < \lambda_1\} \Rightarrow f <_D f^*$ but there is no
such $f^{**} \in \Pi \bar\alpha$ satisfying $f^{**} <_D f$.

\noindent
2) [DC + AC$_{< \kappa}$]  
For $f^*$ as above, let $D_1 = D_{f^*,\bar{\cF}} :=
\{Y \backslash A:A = \emptyset$ {\rm mod} $D$ or 
$A \in D^+$ and there is $f^{**} \in
{}^Y\text{\rm Ord}$ such that $f^{**} <_{D + A} f^*$ and
$f \in \cup\{{\cF}_\alpha:\alpha < \lambda_1\} \Rightarrow f
<_{D+A} f^{**}\}$.  Now $D_1$ is a $\kappa$-complete filter and
$\emptyset \notin D_1,D_1$ extends $D$ and if {\rm cf}$(\delta) \ge
\hrtg(\cP(Y))$ then $\langle \cF_\alpha:\alpha < \delta\rangle$ 
witness that $f^*$ is a $<_{D_1}$-exact upper bound of $\bar\cF$ hence
$(\prod\limits_{y \in Y} f^*(y),<_{D_1})$ has 
pseudo-true-cofinality {\rm cf}$(\delta)$.

\noindent
3) [{\rm DC + AC}$_{< \kappa} +$ {\rm AC}$_{\cP(Y)}$]

If {\rm cf}$(\delta) \ge \hrtg(\cP(Y))$ \then \, there 
is $f' \in {}^Y\text{\rm Ord}$ which is an $<_{D}$-exact upper
bound of $\bar\cF$, i.e. $f <_D f' \Rightarrow (\exists \alpha <
\delta)(\exists g \in \cF_\alpha)[f < g$ {\rm mod} $D]$ and $f \in
\bigcup\limits_{\alpha < \delta} \cF_\alpha \Rightarrow f <_{D_1,f'}$.
\end{claim}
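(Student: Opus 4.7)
I would first produce a $<_D$-upper bound of $\bigcup_{\alpha<\delta}\cF_\alpha$ and then minimize it. For each $y\in Y$, the collection $\{f(y):f\in\bigcup_\alpha\cF_\alpha\}$ is a set of ordinals, so defining $g_0(y):=\sup\{f(y)+1:f\in\bigcup_\alpha\cF_\alpha\}$ yields a pointwise upper bound $g_0\in{}^Y\mathrm{Ord}$. Suppose toward contradiction that every $<_D$-upper bound admits a strictly smaller one. Using DC, pick $g_0>_D g_1>_D g_2>_D\cdots$. Each $B_n:=\{y:g_{n+1}(y)<g_n(y)\}$ lies in $D$, and $\aleph_1$-completeness of $D$ (guaranteed by $\kappa>\aleph_0$) gives $B:=\bigcap_n B_n\in D$, which is non-empty. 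Any $y\in B$ furnishes a strictly decreasing $\omega$-sequence of ordinals, a contradiction, so the desired $f^*$ exists.

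\textbf{Part (2).} Let $J$ denote the family of ``bad'' sets, i.e., $A\in J$ iff $A=\emptyset$ mod $D$, or $A\in D^+$ admits a witness $f^{**}$ as in the definition of $D_1$; so $D_1$ is the filter dual to $J$. The verification splits into: (a) $Y\notin J$, because an improvement of $f^*$ on all of $Y$ would be an upper bound strictly $<_D f^*$, contradicting Part (1), whence $\emptyset\notin D_1$; (b) $J$ is downward closed, since an $A$-witness $f^{**}$ remains a witness for any $A'\subseteq A$ with $A'\in D^+$ because $Y\setminus A'\supseteq Y\setminus A$ only enlarges the exceptional set one ignores; (c) $J$ is closed under $<\kappa$-unions, by using AC$_{<\kappa}$ to select witnesses $f^{**}_i$ on each $A_i\in J\cap D^+$ and gluing them along a first-index partition of $A:=\bigcup_i A_i$, where $\kappa$-completeness of $D$ ensures that the exceptional sets (where $f^{**}\ge f^*$ on $A$, or $f\ge f^{**}$ on $A$ for each $f\in\bigcup_\alpha\cF_\alpha$) remain $D$-null, being contained in unions of $<\kappa$ many $D$-null sets. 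Together with $D\subseteq D_1$ (any $A\in D$ has $Y\setminus A\in J$ trivially), this shows $D_1$ is a $\kappa$-complete filter extending $D$ with $\emptyset\notin D_1$.

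Under the hypothesis $\cf(\delta)\ge\hrtg(\cP(Y))$, the pseudo-true-cofinality statement reduces to: for every $g\in\prod_{y\in Y} f^*(y)$ there exist $\alpha<\delta$ and $f\in\cF_\alpha$ with $g<_{D_1} f$. Suppose not; then for every $f\in\bigcup_\alpha\cF_\alpha$, the set $B_f:=\{y:g(y)\ge f(y)\}$ lies outside $J$, hence is $D$-positive and admits no improvement of $f^*$. Fixing any such $f$ and exploiting the $<_D$-increasing structure of $\bar\cF$, one then constructs an explicit improvement of $f^*$ on $B_f$: the function $g$ itself, possibly shifted upward on a $D$-small set, witnesses $g<_{D+B_f} f^*$ and $f'<_{D+B_f} g$ for every $f'\in\bigcup_\alpha\cF_\alpha$, the $\cf(\delta)\ge\hrtg(\cP(Y))$ bound being used to absorb the $\cP(Y)$-many accumulated $D$-small errors. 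This contradicts $B_f\notin J$. Part (3) follows the same pattern but, with AC$_{\cP(Y)}$ in hand, selects a simultaneous system $\langle f^{**}_A:A\in\cP_*\rangle$ (where $\cP_*:=\{A\in D^+:A\in J\}$) and defines $f'\in{}^Y\mathrm{Ord}$ by $f'(y):=\min\bigl(\{f^{**}_A(y):A\in\cP_*,\ y\in A\}\cup\{f^*(y)\}\bigr)$; analogous bookkeeping then upgrades the $D_1$-exactness of $f^*$ to $D$-exactness of $f'$.

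The main obstacle is the exactness argument of Part (2) (inherited also by Part (3)): the candidate improvement $g$ only satisfies $g\ge f$ on $B_f$, not $g>f$, and different members of $\bigcup_\alpha\cF_\alpha$ each contribute their own potential mismatch; it is precisely the hypothesis $\cf(\delta)\ge\hrtg(\cP(Y))$ together with the $<_D$-increasing structure of $\bar\cF$ that permits controlling all these $\cP(Y)$-many adjustments simultaneously and converting $g$ into a uniform strict improvement of $f^*$ on $B_f$.
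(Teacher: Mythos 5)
Parts (1) and the filter-verification portion of part (2) are fine and essentially identical to the paper's argument (pointwise sup plus DC-minimization and the intersection $\bigcap_n A_n\in D$; then disjointification of the bad sets and $\kappa$-completeness of $D$ to glue the $<\kappa$ witnesses). The genuine gap is in your exactness argument for part (2), which is the heart of the claim. You fix a single $f$ with $B_f=\{y:g(y)\ge f(y)\}$ not bad and try to promote $g$ (after a shift on a $D$-small set) to a witness on $B_f$, i.e.\ to arrange $f'<_{D+B_f}g$ for \emph{every} $f'\in\bigcup_\alpha\cF_\alpha$. This cannot work: for $f'\in\cF_\beta$ with $\beta$ large, all you know is that $\{y:g(y)\ge f'(y)\}$ is $D_1$-positive and contained in $B_f$ mod $D$; the complementary set $\{y\in B_f:f'(y)>g(y)\}$ can perfectly well also be $D$-positive (e.g.\ $g$ huge on one $D$-positive half of $B_f$ and small on the other). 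No adjustment of $g$ on a $D$-\emph{small} set repairs a $D$-\emph{positive} failure set, and the hypothesis $\cf(\delta)\ge\hrtg(\cP(Y))$ does not ``absorb'' such errors --- in the paper it is used for something else entirely.

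What is missing is a localization/dichotomy. The paper sets $\cA=\{A\in D_1^+:$ some $f\in\bigcup_\alpha\cF_\alpha$ satisfies $g\le f$ mod $(D+A)\}$ and splits: if $\cA$ is dense in $D_1^+$, then for each $A\in\cA$ the minimal index $\alpha_A$ with a dominating member in $\cF_{\alpha_A}$ is defined without choice, and $\cf(\delta)\ge\hrtg(\cP(Y))\ge\hrtg(\cA)$ is used precisely to get $\alpha(*)=\sup\{\alpha_A+1:A\in\cA\}<\delta$, after which any $f\in\cF_{\alpha(*)}$ dominates $g$ mod $D_1$; if $\cA$ is not dense, one passes to a $D_1$-positive $B$ containing no member of $\cA$, and only on \emph{that} $B$ (not on your $B_f$) does $g$ dominate every $f'$ and hence witness $B$ bad, contradicting $B\in D_1^+$. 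Your single-$f$ argument skips exactly this step of shrinking to a positive set on which no member of the family beats $g$. Part (3) inherits the same problem, and in addition the paper's proof of (3) is quite different from your $\min$-over-witnesses sketch: it uses AC$_{\cP(Y)}$ to choose pairs $(D_A,f_A)$, bounds $|\prod_t\cU_t|$ by $|\cP(Y)|$, and uses a stabilization argument over $\cf(\delta)\ge\hrtg(\cP(Y))$ to pick $\alpha(*)$ and then $g^*(t)=\min(\cU_t\setminus f^*(t))$.
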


\begin{PROOF}{\ref{r24}}
1) If not then by DC we can find $\bar f = \langle f_n:n <
\omega\rangle$ such that:
\mn
\begin{enumerate}
\item[$(a)$]  $f_n \in {}^Y\text{Ord}$
\sn
\item[$(b)$]  $f_{n+1} < f_n$ mod $D$
\sn
\item[$(c)$]  if $f \in \bigcup\limits_{\alpha < \delta} \cF_\alpha$ and
$n < \omega$ then $f < f_n$ mod $D$.
\end{enumerate}
\mn
So $A_n = \{t \in Y:f_{n+1}(t) < f_n(t)\} \in D$ hence $\cap\{A_n:n <
\omega\} \in D$, contradiction.

\noindent
2) First, clearly $D_1 \subseteq \cP(Y)$ and by the assumption
$\emptyset \notin D_1$.  Second, if $f^{**}$ witness $A \in D_1$ and $A
\subseteq B \subseteq Y$ then $f^{**}$ witness $B \in D_1$.

Third, we prove $D_1$ is closed under intersection of $< \kappa$
members, so assume $\zeta < \kappa$ and $\bar A = \langle
A_\varepsilon:\varepsilon < \zeta\rangle$ is a sequence of members of
$D_1$.  Let $A := \cap \{A_\varepsilon:\varepsilon <
\zeta\},B_\varepsilon = Y \backslash A_\varepsilon$ for $\varepsilon <
\zeta$ and $B'_\varepsilon = B_\varepsilon \backslash 
\cup\{B_\xi:\xi < \varepsilon\}$ and $B =
\cup\{B_\varepsilon:\varepsilon< \zeta\}$.  Clearly $B = Y \backslash
A,A \subseteq Y$ and $\langle B'_\varepsilon:\varepsilon <
\zeta\rangle$ is a sequence of pairwise disjoint subsets of $Y$ with
union $B$.  But AC$_\zeta$ holds hence we can find $\langle
f^{**}_\varepsilon:\varepsilon < \zeta\rangle$ such that
$f^{**}_\varepsilon$ witness $A_\varepsilon \in D_1$.  Let $f^{**} \in
{}^Y\text{Ord}$ be defined by $f^{**}(t) = 
f^{**}_\varepsilon(t)$ if $t \in B'_\varepsilon$ or $\varepsilon = 0
\wedge t \in Y \backslash B$; easily $B'_\varepsilon \in D^+ \wedge
f \in \bigcup\limits_{\alpha < \delta}
\cF_\alpha \Rightarrow f < f^{**}$ mod $(D + B'_\varepsilon)$ but
$B = \cup\{B'_\varepsilon:\varepsilon < \zeta\}$ and $D$ is $\kappa$-complete
hence $f \in
\bigcup\limits_{\alpha < \delta} \cF_\alpha \Rightarrow f < f^{**}$
mod$(D+B)$.   
So as $A = Y \backslash B$ clearly $f^{**}$ witness $A = 
\bigcap\limits_{\varepsilon < \zeta} A_\varepsilon \in D_1$
so $D_1$ is indeed $\kappa$-complete.

Lastly, assume cf$(\delta) \ge \hrtg(\cP(Y))$ and
we shall show that $f^*$ is an exact upper bound of
$\bar{\cF}$ modulo $D_1$.  So assume $f^{**} \in {}^Y\text{Ord}$ and
$f^{**} < f^*$ mod $D_1$.  

Let $\mathscr{A} = \{A \in D^+_1$: there is $f \in \bigcup\limits_{\alpha <
\delta} \cF_\alpha$ such that $f^{**} \le f$ mod$(D+A)\}$, yes, not $D_1$!
\bigskip

\noindent
\underline{Case 1}:  For every $B  \in D^+_1$ there is $A \in \cA,A
\subseteq B$.

For every $A \in \cA$ let $\alpha_A = \text{ min}\{\beta$: there
 is $f \in \cF_\beta$ such that $f^{**} \le f \text{ mod}(D+A)\}$.

So the sequence $\langle \alpha_A:A \in \cA\rangle$ is well defined.

Let $\alpha(*) = \sup\{\alpha_A +1:A \in \cA\}$, it is $< \delta$ as
cf$(\delta) \ge \hrtg(\cP(Y)) \ge \hrtg(\cA)$.

Now any $f \in \cF_{\alpha(*)}$ is as required because $\{t \in Y:f^{**}(t)
\ge f(t)\}$ contains no $A \in \cA$ hence is $= \emptyset$ mod $D_1$
by the case assumption.
\bigskip

\noindent
\underline{Case 2}:  $B  \in D^+_1$ and there is no $A \in \cA$ such
that $A \subseteq B$.

So $f \in \bigcup\limits_{\alpha < \delta} \cF_\alpha$ implies $\{t
 \in B:f^{**}(t) \le f(t)\} = \emptyset$ mod $D_1$, i.e. $f < f^{**}$
 mod$(D_1+B)$ so by the definition of $D_1$ we have $Y \backslash B
 \in D_1$, contradiction to the case assumption.

\noindent
3) By \cite[5.12]{Sh:938} \wilog \, $\bar{\cF}$ is $\aleph_0$-continuous.
   For every $A \in D^+$ the assumptions hold even if we replace $D$
   by $D+A$ and so there are $D_1,f^*$ as in part (2), we are allowed
   to use part (2) as we have AC$_{< \kappa}$.  As we are
   assuming AC$_{\cP(Y)}$ there is a sequence $\langle (D_A,f_A):A \in
   D^+\rangle$ such that:
\mn
\begin{enumerate}
\item[$(*)_1$]  $(a) \quad D_A$ is a $\kappa$-complete filter
extending $D+A$
\sn
\item[${{}}$]  $(b) \quad f_A \in {}^Y\text{Ord}$ is a $<_{D_A}$-exact
upper bound of $\bar{\cF}$.
\end{enumerate}
\mn
Recall $|A| \le_{\text{qu}} |B|$ is defined as: $A$ is empty or there is a
function from $B$ onto $A$.  Of course, this implies $\hrtg(A) \le \hrtg(B)$.

Let $\bar\cU = \langle \cU_t:t \in Y\rangle$ be defined by $\cU_t =
\{f_A(t):A \in D^+\} \cup \{\sup\{f(t):f \in \bigcup\limits_{\alpha <
\delta} \cF_\alpha\}\}$ hence $t \in Y \Rightarrow 0 < |\cU_t| 
\le_{\text{qu}} \cP(Y)$
even uniformly (or recall that we have AC$_Y$) so there is a sequence $\langle
h_t:t \in Y\rangle$ such that $h_t$ is a function from $\cP(Y)$ onto
$\cU_t$ hence $|\prod\limits_{t \in Y} \cU_t| \le_{\text{qu}} \cP(Y) \times Y
\le_{\text{qu}} \cP(Y \times Y)$ but
AC$_{\cP(Y)}$ holds  hence $Y$ can be well ordered but \wilog \, $Y$
is infinite hence $|Y \times Y| = Y$, so $|\prod\limits_{t \in Y}
\cU_t| \le_{\text{qu}} |\cP(Y)|$.

Let $\cG = \{g:g \in \prod\limits_{t \in Y} \cU_t$ and not for every $f
\in \bigcup\limits_{\alpha < \delta} \cF_\alpha$ do we have $f < g$ mod
$D\}$, so $|\cG| \le |\prod\limits_{t \in Y} \cU_t| \le_{\text{qu}} 
|\cP(Y \times Y)| = |\cP(Y)|$ hence $\hrtg(\cG) \le \text{ cf}(\delta)$.

Now for every $g \in \cG$ the sequence $\langle\{\{t \in Y:g(t) \le
f(t)\}:f \in \bigcup\limits_{\beta < \alpha} \cF_\beta\}:\alpha <
\delta\rangle$ is a $\subseteq$-increasing sequence of subsets of
$\cP(Y)$, but $\hrtg(\cP(Y)) \le \text{ cf}(\delta)$ hence the
sequence is eventually constant and let $\alpha(g) < \delta$ be
minimal such that
\mn
\begin{enumerate}
\item[$(*)_g$]  $(\forall \beta)[\alpha(g) \le \beta < \delta
\Rightarrow \{\{t \in Y:g(t) \le f(t)\}:f \in \bigcup\limits_{\gamma <
\beta} \cF_\gamma\} = \{\{t \in Y:g(t) \le f(t)\}:f \in
\bigcup\limits_{\gamma < \alpha} \cF_\gamma\}]$.
\end{enumerate}
\mn
But recalling $\hrtg(\cG) \le \text{ cf}(\delta)$, the ordinal
$\alpha(*) := \sup\{\alpha(g):g \in \cG\}$ is $< \delta$.   Now choose
$f^* \in \cF_{\alpha(*)+1}$ and define $g^* \in \prod\limits_{t \in Y}
\cU_t$ by $g^*(t) = \text{ min}(\cU_t \backslash f^*(t))$, well
defined as sup$\{f(t):t \in \bigcup\limits_{\alpha < \delta}
\cF_\alpha\} \in \cU_t$.  It is easy to check that $g^*$ is as required.
\end{PROOF}

\begin{observation}
\label{r25}
Let $D$ be a filter on $Y$.

\noindent
If $D$ is $\kappa$-complete for every $\kappa$ \then \, for every
   $f \in {}^Y\text{\rm Ord}$ and $A \in D^+$ there is $B \subseteq A$
   from $D^+$ such that $f \rest B$ is constant.
\end{observation}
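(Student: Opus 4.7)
The plan is a straightforward pigeonhole argument using the partition of $A$ into level sets of $f$.

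First I would fix $f \in {}^Y\text{Ord}$ and $A \in D^+$, and set $S = \{f(t) : t \in A\}$, which is a set of ordinals (hence a genuine set whose size is bounded by some cardinal, even without choice, via $\hrtg(S)$). For each $\beta \in S$ define
\[
A_\beta = \{t \in A : f(t) = \beta\},
\]
so that $\langle A_\beta : \beta \in S\rangle$ is a partition of $A$ indexed by a set of ordinals.

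The main step is to show that some $A_\beta$ lies in $D^+$; then $B := A_\beta$ is as required since $f \restriction B$ is constant with value $\beta$. I would argue by contradiction: suppose $A_\beta \notin D^+$ for every $\beta \in S$, equivalently $Y \setminus A_\beta \in D$ for each $\beta \in S$. Choose any cardinal $\kappa > \hrtg(S)$; by hypothesis $D$ is $\kappa$-complete, so
\[
\bigcap_{\beta \in S}(Y \setminus A_\beta) \in D.
\]
But this intersection is exactly $Y \setminus \bigcup_{\beta \in S} A_\beta = Y \setminus A$, contradicting $A \in D^+$.

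There is no real obstacle here; the only point requiring any care in the choice-free setting is that the index set $S$ is indeed a set of bounded cardinality so that a sufficiently complete version of the filter applies, and this is immediate since $S$ is a set of ordinals and the hypothesis grants $\kappa$-completeness for \emph{every} $\kappa$.
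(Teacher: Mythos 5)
Your argument is correct and is exactly the intended one: the paper's proof of this observation is simply ``Straight.'', and the level-set partition of $A$ indexed by the set of ordinals $S=\{f(t):t\in A\}$, together with $\kappa$-completeness for some $\kappa>|S|$ (available since $S$ is well-ordered, so no choice is needed to form the definable family $\langle A_\beta:\beta\in S\rangle$), is the standard way to carry it out. Nothing to add.
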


\begin{PROOF}{\ref{r25}}
Straight.
\end{PROOF}
\newpage

\section {Composition and generating sequence of pseudo pcf}

How much choice suffice to show $\lambda = \ptf(\prod\limits_{(i,j) \in Y}
\lambda_{i,j}/D)$ when $\lambda_i$ is the pseudo true equality of
$\prod\limits_{j \in Y_i} \lambda_{i,j}$ for $i \in Z$ where
$Z = \{i:(i,j) \in Y\}$ and $Y_i = \{(i,j):i \in Z,j \in Y_i\}$ 
and $\lambda = \text{ ps-tcf}(\prod\limits_{i \in
Z} \lambda_i,<_E)$?  This is \ref{e3}, the parallel of
\cite[Ch.II,1.10,pg.12]{Sh:g}.

\begin{claim}
\label{e1}
If $\boxplus$ below holds \then \, for some partition $(Y_1,Y_2)$ of $Y$
and club $E$ of $\lambda$ we have
\mn
\begin{enumerate}
\item[$\oplus$]   $(a) \quad$ if $Y_1 \in D^+$ and $f,g \in
\cup\{\cF_\alpha:\alpha \ge \text{\rm min}(E)\}$ \then \,
$f=g$ {\rm mod}$(D + Y_1)$
\sn
\item[${{}}$]   $(b) \quad$ if $Y_2 \in D^+$ then $\langle \cF_\alpha:
\alpha \in E \rangle$ is $<_{D+Y_2}$-increasing
\end{enumerate}
\mn
where
\mn
\begin{enumerate}
\item[$\boxplus$]   $(a) \quad \lambda$ is regular $\ge \hrtg(\cP(Y)))$
\sn
\item[${{}}$]   $(b) \quad \cF_\alpha \subseteq {}^Y\text{\rm Ord}$ for
$\alpha < \lambda$ is non-empty
\sn
\item[${{}}$]   $(c) \quad$ if $\alpha_1 < \alpha_2 < \lambda$ and
$f_\ell \in \cF_{\alpha_\ell}$ for $\ell=1,2$ \then \,
$f_1 \le f_2$ {\rm mod} $D$.
\end{enumerate}
\end{claim}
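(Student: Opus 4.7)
The plan is to isolate $Y_2$ as the ``support of strict growth'' of $\bar\cF$ via a stabilization argument powered by $\lambda\ge\hrtg(\cP(Y))$, and then carve $E$ out as the closure club of a natural function into $\lambda$. For $\alpha<\lambda$ set
\[
W(\alpha) := \bigcup\bigl\{\{t\in Y: f(t)<g(t)\}: \alpha\le\beta_1<\beta_2<\lambda,\ f\in\cF_{\beta_1},\ g\in\cF_{\beta_2}\bigr\}.
\]
The sequence $\langle W(\alpha):\alpha<\lambda\rangle$ is $\subseteq$-decreasing in $\cP(Y)$, so by $\hrtg(\cP(Y))\le\lambda=\cf(\lambda)$ it stabilizes at some $\alpha^*<\lambda$; put $Y_2:=W(\alpha^*)$ and $Y_1:=Y\setminus Y_2$.

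To check (a), take $\min(E):=\alpha^*$. Given $\alpha_1,\alpha_2\ge\alpha^*$ and $f\in\cF_{\alpha_1},g\in\cF_{\alpha_2}$, fix any $\alpha_3>\max(\alpha_1,\alpha_2)$ and any $h\in\cF_{\alpha_3}$ (nonempty by $\boxplus(b)$). By (c), $f,g\le h\mod D$; further $\{t:f(t)<h(t)\}\subseteq W(\alpha_1)\subseteq Y_2$ and $\{t:g(t)<h(t)\}\subseteq W(\alpha_2)\subseteq Y_2$ by the stabilization of $W$. Intersecting with $Y_1$ forces $f=h=g\mod(D+Y_1)$, which yields (a).

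For (b), define
\[
\phi(\alpha) := \sup\bigl\{\beta>\alpha: (\exists f\in\cF_\alpha)(\exists g\in\cF_\beta)\ \{t\in Y_2: f(t)=g(t)\}\in D^+\bigr\}
\]
for $\alpha\in[\alpha^*,\lambda)$, with the convention $\phi(\beta):=0$ for $\beta<\alpha^*$, and let $E:=\{\alpha<\lambda:\alpha\ge\alpha^*\text{ and }(\forall\beta<\alpha)\,\phi(\beta)<\alpha\}$. Once one shows $\phi(\alpha)<\lambda$ for every $\alpha$, the set $E$ is a club of $\lambda$ by the standard closure argument, and for $\alpha_1<\alpha_2\in E$ the inequality $\phi(\alpha_1)<\alpha_2$ forces $\{t\in Y_2:f(t)=g(t)\}=\emptyset\mod D$ for every $f\in\cF_{\alpha_1},g\in\cF_{\alpha_2}$; combined with $\{t:f(t)>g(t)\}=\emptyset\mod D$ from (c), this delivers $f<g\mod(D+Y_2)$, so (b) follows.

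The hardest step will be the bound $\phi(\alpha)<\lambda$. If it fails, pigeonhole on the equality set in $\cP(Y)\cap D^+\cap\cP(Y_2)$ (admissible because $|\cP(Y)|<\hrtg(\cP(Y))\le\lambda=\cf(\lambda)$) to extract a single $A\in D^+$ with $A\subseteq Y_2$ and an unbounded $S\subseteq(\alpha,\lambda)$ such that for every $\beta\in S$ there is a witness $(f_\beta,g_\beta)\in\cF_\alpha\times\cF_\beta$ with $\{t\in Y_2:f_\beta(t)=g_\beta(t)\}=A$. Applying (c) to $g_{\beta_1},g_{\beta_2}$ for $\beta_1<\beta_2$ in $S$ and restricting to $A$ shows that $\{f_\beta\rest A:\beta\in S\}$ is weakly $\le_D$-increasing inside $\{f\rest A:f\in\cF_\alpha\}/D$; the intended contradiction is that $A\subseteq W(\alpha^*)$ forces strict growth past every level at every $t\in A$, which clashes with the near-constancy of the $f_\beta\rest A$ when pushed through the sandwich $f\le f'\le g'\le g$ supplied by (c). Making this sandwich robust when each $\cF_\beta$ contains mutually incomparable members is the technically delicate piece, and it is where the assumption $\lambda\ge\hrtg(\cP(Y))$ does its essential work.
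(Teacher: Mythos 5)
Your choice of $Y_2$ is the wrong set, and this is a fatal gap rather than a repairable technicality. The set $W(\alpha)$ is a pointwise union, so a point $t$ lands in $Y_2=\bigcap_\alpha W(\alpha)$ as soon as, at each level, \emph{some} pair of functions from \emph{some} two later $\cF$'s grows at $t$; this is compatible with no growth at all happening modulo $D$. Concretely, take $Y=\omega$, $D$ the filter of cofinite sets, $\lambda$ regular $\ge\hrtg(\cP(\omega))$, and $\cF_\alpha=\{f^n_\alpha:n<\omega\}$ where $f^n_\alpha(m)=\alpha$ for $m\le n$ and $f^n_\alpha(m)=0$ for $m>n$. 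Clause $\boxplus(c)$ holds because $\{m:f^{n_1}_{\alpha_1}(m)>f^{n_2}_{\alpha_2}(m)\}\subseteq[0,n_1]$ is finite; and every $m$ belongs to every $W(\gamma)$ (witnessed by the pair $f^m_\gamma,f^m_{\gamma+1}$), so your $Y_2$ is all of $\omega$ and lies in $D^+$. Yet any two functions in the whole system agree on a cofinite set (both are eventually $0$), so no club $E$ makes $\langle\cF_\alpha:\alpha\in E\rangle$ a $<_{D+Y_2}$-increasing sequence: clause $\oplus(b)$ fails for your partition and every club, while the correct partition here has $Y_2$ finite, hence $D$-null, and $\oplus(a)$ holding on its complement. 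The same phenomenon is exactly what blocks your self-identified ``hardest step'': the assertion that $A\subseteq W(\alpha^*)$ ``forces strict growth past every level at every $t\in A$'' only produces, for each $t$, \emph{some} witnessing pair, and clause (c) lets you compare that pair with your $f_\beta,g_\beta$ only modulo a $D$-null set which may well contain $t$; so no contradiction is available, and in the example above $\phi(\alpha)=\lambda$ actually occurs.

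The paper avoids this by never leaving the mod-$D$ world: for each $Z\in D^+$ it considers the relation $S_Z$ (``some $f\in\cF_\alpha$, $g\in\cF_\beta$ satisfy $f<g$ mod $(D+Z)$'') together with its ``for all pairs'' companion $S^+_Z$, upgrades the former to the latter by the sandwich $f\le f'<g'\le g$ at the cost of two extra levels, forms the $\aleph_1$-complete ideal $J$ of those $Z$ on which eventual strict mod-$D$ growth occurs, uses $\lambda\ge\hrtg(\cP(Y))$ to bound uniformly both the levels $\alpha(Z)$ past which $J$-positive sets never again see growth and the jump function $f_*$ whose closure points form $E$, and only then defines $Y_1$ as the literal equality set of two particular functions taken at two consecutive points of $E$. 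Your stabilization and pigeonhole arguments are in the right spirit (they parallel the paper's bounding of $\alpha(*)$ and $f_*$), but to salvage the outline you must replace $W(\alpha)$ by a mod-$D$ notion of the support of growth, essentially the paper's ideal $J$; the pointwise version cannot work.
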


\begin{PROOF}{\ref{e1}}
For $Z \in D^+$ let
\mn
\begin{enumerate}
\item[$(*)_1$]   $(a) \quad S_Z = \{(\alpha,\beta):\alpha \le \beta <
\lambda$ and for some $f \in \cF_\alpha$ and $g \in \cF_\beta$ we have

\hskip25pt $f < g$ mod $(D + Z)\}$
\sn
\item[${{}}$]  $(b) \quad S^+_Z = \{(\alpha,\beta):\alpha \le 
\beta < \lambda$ and for every $f \in \cF_\alpha$ and $g \in 
\cF_\beta$ we have

\hskip25pt  $f < g$ mod $(D+Z)\}$.
\end{enumerate}
\mn
Note
\mn
\begin{enumerate}
\item[$(*)_2$]   $(a) \quad$ if $\alpha_1 \le \alpha_2 \le \alpha_3
\le \alpha_4$ and $(\alpha_2,\alpha_3) \in S_Z$ \ then \,
$(\alpha_1,\alpha_4) \in S_Z$
\sn
\item[${{}}$]   $(b) \quad$ similarly for $S^+_Z$
\sn
\item[${{}}$]  $(c) \quad$ if $\alpha_1 \le \alpha_2 \le \alpha_3
\le \alpha_4$ and $(\alpha_1 \ne \alpha_2) \wedge (\alpha_3 \ne
\alpha_4)$ and $(\alpha_2,\alpha_3) \in S_Z$

\hskip25pt then $(\alpha_1,\alpha_4) \in S^+_Z$
\sn
\item[${{}}$]  $(d) \quad S_Z \subseteq S^+_Z$.
\end{enumerate}
\mn
[Why?  By the definitions.]

Let
\mn
\begin{enumerate}
\item[$(*)_3$]   $J := \{Z \subseteq Y:Z \in$ {\rm dual}$(D)$ or $Z
\in D^+$ and $(\forall^\infty \alpha < \lambda)(\exists \beta)
((\alpha,\beta) \in S^+_Z)$.
\end{enumerate}
\mn
Next
\mn
\begin{enumerate}
\item[$(*)_4$]   $(a) \quad J$ is an $\aleph_1$-complete ideal on $Y$
\sn
\item[${{}}$]  $(b) \quad$ if $D$ is $\kappa$-complete then $J$ is
$\kappa$-complete\footnote{note that AC$_\kappa$ holds in the non-trivial
case as AC$_{\cP(Y)}$ holds, see \ref{r25}}
\sn
\item[${{}}$]  $(c) \quad J = \{Z \subseteq Y:Z \in \text{ dual}(D)$
or $Z \in D^+$ and $(\forall^\infty \alpha < \lambda)(\exists \beta)$

\hskip25pt $((\alpha,\beta) \in S_Z)\}$.
\end{enumerate}
\mn
[Why?  For clauses (a),(b) check and for clause (c) recall $(*)_2(c)$.]

Let
\mn
\begin{enumerate}
\item[$(*)_5$]   $(a) \quad$ for $Z \in J^+$ let $\alpha(Z) = \text{
min}\{\alpha < \lambda$: for no $\beta \in (\alpha,\lambda)$ do we
have

\hskip25pt  $(\alpha,\beta) \in S_Z\}$
\sn
\item[${{}}$]   $(b) \quad \alpha(*) = \sup\{\alpha_Z:Z \in J^+\}$
\sn
\item[$(*)_6$]  $(a) \quad$ for $Z \in J^+$ we have $\alpha(Z) <
\lambda$
\sn
\item[${{}}$]  $(b) \quad \alpha(*) < \lambda$.
\end{enumerate}
\mn
[Why?  Clause (a) by the definition of the ideal $J$, 
and clause (b) as $\lambda = \text{ cf}(\lambda) \ge \hrtg(\cP(Y))$.]

Let
\mn
\begin{enumerate}
\item[$(*)_7$]   $(a) \quad$ for $Z \in D^+$ let $f_Z:\lambda
\rightarrow \lambda +1$ be defined by $f_Z(\alpha) =$

\hskip25pt  $\text{Min}\{\beta:(\alpha,\beta) \in S^+_Z$ or $\beta = \lambda\}$
\sn
\item[${{}}$]   $(b) \quad f_*:\lambda \rightarrow \lambda$ be defined
by: $f_*(\alpha) = \sup\{f_Z(\alpha):Z \in D^+ \cap J\}$
\sn
\item[${{}}$]  $(c) \quad E = \{\delta:\delta$ a limit ordinal $<
\lambda$ such that $\alpha < \delta \Rightarrow f_*(\alpha) < \delta\}
\backslash \alpha(*)$.
\end{enumerate}
\mn
Hence
\mn
\begin{enumerate}
\item[$(*)_8$]  $(a) \quad$ if $Z \in D^+ \cap J$ then
$f_Z$ is indeed a function from $\lambda$ to
$\lambda$
\sn
\item[${{}}$]  $(b) \quad f_*$ is indeed a function from $\lambda$ to
$\lambda$
\sn
\item[${{}}$]  $(c) \quad E$ is a club of $\lambda$.
\end{enumerate}
\mn
[Why?  Clause (a) by the definition of $J$ and clause (b) as $\lambda
= \text{ cf}(\lambda) \ge \hrtg(\cP(Y))$ and clause (c) follows from (b).]
\mn
\begin{enumerate}
\item[$(*)_9$]   Let $\alpha_0 = \text{ min}(E),\alpha_1 = \text{
min}(E \backslash (\alpha_0 +1))$ choose $f_0 \in \cF_{\alpha_0},f_1
\in \cF_{\alpha_1}$ and let $Y_1 = \{y \in Y:f_0(y) = f_1(y)\}$ and $Y_2 =
Y \backslash Y_1$
\sn
\item[$(*)_{10}$]   $(Y_1,Y_2,E)$ are as required.
\end{enumerate}
\mn
[Why?  Think.]
\end{PROOF}

\begin{claim}
\label{e2}
[{\rm AC}$_Y$]

We have $\lambda =$ {\rm ps-tcf}$(\bar\alpha_1,<_D)$ \when \,
\mn
\begin{enumerate}
\item[$(a)$]  $\bar\alpha \in {}^Y\text{\rm Ord}$ and $t \in Y
\Rightarrow \cf(\alpha_t) \ge \hrtg(Y)$
\sn
\item[$(b)$]  $E$ is the equivalence relation on $Y$ such that $s E t
\Leftrightarrow \alpha_s = \alpha_t$
\sn
\item[$(c)$]  $\lambda =$ {\rm ps-tcf}$(\Pi\bar\alpha,<_D)$
\sn
\item[$(d)$]  $Y_1 = Y/E$
\sn
\item[$(e)$]  $D_1 = \{Z \subseteq Y/E:\cup\{X:X \in Z\} \in D\}$
\sn
\item[$(f)$]  $\bar\alpha_1 = \langle \alpha_{1,y_1}:y_1 \in Y_1
\rangle$ where $y_1 = y/E \Rightarrow \alpha_{1,y_1} = \alpha_y$.
\end{enumerate}
\end{claim}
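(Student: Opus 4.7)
The strategy is to identify $(\Pi\bar\alpha_1,<_{D_1})$ with a $<_D$-cofinal suborder of $(\Pi\bar\alpha,<_D)$ via the natural ``constant extension'' map, and then to transfer the pseudo-true-cofinality witness through this identification. I set up the embedding $\Phi:\Pi\bar\alpha_1\to\Pi\bar\alpha$ by $\Phi(g)(y)=g(y/E)$; since $\alpha_{1,y/E}=\alpha_y$, the image lies in $\Pi\bar\alpha$, and the defining relation $D_1=\{Z\subseteq Y_1:\bigcup Z\in D\}$ gives directly
\[
g<_{D_1}g'\iff\bigcup\{y_1:g(y_1)<g'(y_1)\}\in D\iff\Phi(g)<_D\Phi(g'),
\]
so $\Phi$ is an order-embedding. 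Moreover the image of $\Phi$ is $<_D$-cofinal in $\Pi\bar\alpha$: for $f\in\Pi\bar\alpha$ and each $E$-class $y_1$, the hypothesis $|y_1|\le|Y|<\hrtg(Y)\le\cf(\alpha_{1,y_1})$ yields $\sup f|_{y_1}<\alpha_{1,y_1}$, so $g(y_1):=\sup f|_{y_1}+1$ defines $g\in\Pi\bar\alpha_1$ with $\Phi(g)>f$ pointwise and hence $f<_D\Phi(g)$.

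Given the witness $\bar\cF=\langle\cF_\alpha:\alpha<\lambda\rangle$ for ps-tcf$(\Pi\bar\alpha,<_D)=\lambda$, I first enlarge each level to the $<_D$-maximal set
\[
\cF^{\max}_\alpha=\{h\in\Pi\bar\alpha:(\forall\beta<\alpha)(\forall f\in\cF_\beta)(f<_D h)\wedge(\forall\beta>\alpha)(\forall f\in\cF_\beta)(h<_D f)\},
\]
so that $\bar\cF^{\max}$ remains a $<_D$-increasing, cofinal witness with $\cF_\alpha\subseteq\cF^{\max}_\alpha$. Then I define
\[
\cF^1_\alpha=\{g\in\Pi\bar\alpha_1:\Phi(g)\in\cF^{\max}_\alpha\}.
\]
The $<_{D_1}$-increasing property of $\bar\cF^1$ is inherited from $\bar\cF^{\max}$ by transfer through $\Phi$, and $<_{D_1}$-cofinality of $\bigcup_\alpha\cF^1_\alpha$ in $\Pi\bar\alpha_1$ follows by combining the image-cofinality of $\Phi$ with the $<_D$-cofinality of $\bigcup_\alpha\cF^{\max}_\alpha$ in $\Pi\bar\alpha$.

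The main obstacle is non-emptiness of $\cF^1_\alpha$ for a cofinal set of $\alpha<\lambda$; re-indexing such a cofinal set by $\lambda$ (using regularity $\lambda\ge\hrtg(\cP(Y))$) then produces a witness of the required length. This is where AC$_Y$ enters: for any $f\in\cF_\alpha$, the $E$-constant function $f^{\ast}(y):=\sup f|_{y/E}+1$ lies in $\Pi\bar\alpha$ (by $|y/E|\le|Y|<\cf(\alpha_y)$) and dominates $f$ pointwise, so $f^{\ast}\in\cF^{\max}_{\beta(f)}$ for some $\beta(f)\ge\alpha$, giving $\cF^1_{\beta(f)}\ne\emptyset$. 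AC$_Y$ is used via the $Y$-indexed family $\{y/E\}_{y\in Y}$ (which, applied to ordinal-valued choices, provides AC for the associated $Y_1$-indexed families) to select such $E$-constant representatives coherently at cofinally many $\beta$'s, yielding the required $\bar\cF^1$ of length $\lambda$ after re-indexing.
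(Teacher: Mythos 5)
Your overall route is the paper's: push each $f\in\cF_\alpha$ to the $E$-constant function $y\mapsto\sup\{f(t):t\in y/E\}$ (your $f^{*}$, the paper's $f^{[*]}$) and then factor through $Y/E$; the embedding $\Phi$ and the $<_D$-cofinality of its image are fine. The gap is the sentence ``$f^{*}$ dominates $f$ pointwise, so $f^{*}\in\cF^{\max}_{\beta(f)}$ for some $\beta(f)\ge\alpha$''. Membership in $\cF^{\max}_{\beta}$ requires \emph{both} that $f^{*}$ be $<_D$-above every member of every level below $\beta$ \emph{and} $<_D$-below every member of every level above $\beta$. Pointwise domination of $f$ gives the first condition only for levels below $\alpha$; cofinality of $\bar\cF$ gives the second condition only from some level $\beta_0$ onward (the least $\beta_0$ such that $f^{*}<_D g$ for every $g$ in every $\cF_\gamma$ with $\gamma\ge\beta_0$). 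On the intermediate levels $\gamma\in[\alpha,\beta_0)$ the function $f^{*}$ can be $<_D$-incomparable with members of $\cF_\gamma$ --- the supremum over an $E$-class uses values of $f$ at points outside the relevant $D$-sets --- and then no $\beta$ satisfies both halves, so $f^{*}$ lies in no $\cF^{\max}_{\beta}$ and the non-emptiness argument for the $\cF^1_\beta$ collapses. (A smaller, fixable issue: $\bar\cF^{\max}$ as defined need not be $<_D$-increasing at successor pairs, since $h_1\in\cF^{\max}_\alpha$ is constrained only by levels other than $\alpha$ and $h_2\in\cF^{\max}_{\alpha+1}$ only by levels other than $\alpha+1$, leaving no level to chain through; restricting to limit indices repairs this.)

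The paper resolves exactly this difficulty with Claim \ref{e1}: it keeps the \emph{whole} pushed-forward sequence $\langle\cF^{[*]}_\alpha:\alpha<\lambda\rangle$, which is a priori only $\le_D$-increasing, and uses $\lambda=\cf(\lambda)\ge\hrtg(\cP(Y))$ to extract a partition $(Y'_1,Y'_2)$ of $Y$ and a club $E^{*}$ of $\lambda$ on which the sequence is eventually constant mod $D+Y'_1$ and strictly $<_D$-increasing mod $D+Y'_2$; the cofinality of the original witness forces $Y'_1=\emptyset$ mod $D$, and one then factors $\langle\cF^{[*]}_\alpha:\alpha\in E^{*}\rangle$ through $E$. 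Some such uniform club argument over the whole sequence, rather than slotting each $f^{*}$ individually into a level, is the missing ingredient in your proof.
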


\begin{remark}
But see \ref{e23} which eliminates $\AC_Y$.
\end{remark}

\begin{PROOF}{\ref{e2}}
Let $\bar{\cF} = \langle \cF_\alpha:\alpha < \lambda\rangle$ witness
$\lambda = \text{ ps-tcf}(\bar\alpha,<_D)$.  For $f \in
\cup\{\cF_\alpha:\alpha < \lambda\}$ let $f^{[*]} \in 
{}^Y\text{Ord}$ be defined by $f^{[*]}(s) = \sup\{f(t):t \in s/E\}$.
Clearly $f^{[*]} \in \Pi \bar\alpha$ as $t \in Y \Rightarrow \text{
cf}(\alpha_t) \ge \hrtg(Y)$ by clause (a) of the assumption.  Let
$\cF^{[*]}_\alpha = \{f^{[*]}:f \in \cF_\alpha\}$ for $\alpha <
\lambda$ so $\langle \cF^{[*]}_\alpha:\alpha < \lambda\rangle$ exists and
$\cF^{[*]}_\alpha \subseteq \Pi\bar\alpha$.  Also $f_1 \in
\cF_{\alpha_1} \wedge f_2 \in \cF_{\alpha_2} \wedge \alpha_1 <
\alpha_2 < \lambda \Rightarrow f_1 \le_D f_2 \Rightarrow f^{[*]}_1
\le_D f^{[*]}_2$ hence $\alpha_1 < \alpha_2 < \lambda \wedge f_2 \in
\cF^{[*]}_1 \wedge f_2 \in \cF^{[*]}_2 \Rightarrow f_1 \le_D f_2$.

Now apply \ref{e1}, getting $(Y'_1,Y'_2)$ as there, but by the choice
of $\bar{\cF}$ necessarily $Y'_1 = \emptyset$ mod $D$.  Hence 
for some club $E$ of $\lambda,\langle
\cF^{[*]}_\alpha:\alpha \in E\rangle$ is $<_D$-increasing cofinal in
$\Pi\bar\alpha$.

Lastly, for $f \in \cup\{\cF^{[*]}_\alpha:\alpha \in E\}$ let
$f^{[**]} \in {}^{(Y_1)}\text{Ord}$ be defined by $f^{[**]}(t/E) =
f(t)$, well defined as $f \rest (t/E)$ is constant.  Let
$\cF^{[**]}_\alpha := \{f^{[**]}:f \in \cF^{[*]}_\alpha\}$ for $\alpha
\in E$.  Easily $\langle \cF^{[**]}_\alpha:\alpha \in E\rangle$
witness the desired conclusions.
\end{PROOF}

\noindent
By the following claims we do not really lose by using $\ga \subseteq
\Reg$ instead $\bar\alpha \in {}^Y \Ord$ as by \ref{e24}, \wilog \,
$\alpha_t = d(\alpha_t)$ (when $\AC_Y$) and by \ref{e23} (or \ref{e4}
of $\AC_Y$). 
\begin{claim}
\label{e23}
Assume $\bar\alpha \in {}^Y${\rm Ord}, $D \in$ {\rm Fil}$(Y)$ and
$\lambda =$ {\rm ps-pcf}$(\Pi \bar\alpha,<_D)$ so $\lambda$ is
regular and $y \in Y \Rightarrow \alpha_y < \lambda$.

If $\langle \cF_\alpha:\alpha < \lambda\rangle$ witness $\lambda =$
{\rm ps-tcf}$(\Pi \bar\alpha,<_D)$ and $y \in Y \Rightarrow \cf(\alpha_y) 
\ge \hrtg(Y)$ and  $\lambda \ge \hrtg(Y)$ \then \, for some $e$:
\mn
\begin{enumerate}
\item[$(a)$]  $e \in \text{\rm eq}(Y) = \{e:e$ an equivalence relation
on $Y\}$
\sn
\item[$(b)$]   the sequence $\cF_e = \langle \cF_{e,\alpha}:\alpha <
\lambda\rangle$ witness {\rm ps-tcf}$(\langle \alpha_{y/e}:y \in
Y/e\rangle,D/e\rangle$ where
\sn
\item[$(c)$]  $\alpha_{y/e} = \alpha_y,D/e = \{A/e:A \in D\}$ where 
$A/e = \{y/e:y \in A\}$ and $\cF_{e,\alpha} = \{f^{[*]}:f \in
\cF_\alpha\},f^{[*]}:Y/e \rightarrow \Ord$ is defined by $f^{[*]}(t/e) =
\sup\{f(s):s \in t/e\}$; noting $\hrtg(Y/e) \le \hrtg(Y)$.
\end{enumerate}
\end{claim}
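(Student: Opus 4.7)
The plan is to take $e$ to be the equivalence relation on $Y$ defined by $s\,e\,t \iff \alpha_s = \alpha_t$, making clause (a) and the assignment $\alpha_{y/e} = \alpha_y$ of clause (c) automatic. Setting $f^{[*]}(y/e) := \sup\{f(s): s \in y/e\}$ and $\cF_{e,\alpha} := \{f^{[*]}: f \in \cF_\alpha\}$ as in the statement, clause (b) reduces to three verifications for $\cF_e$: membership $\cF_{e,\alpha} \subseteq \Pi\langle\alpha_{y/e}\rangle$, cofinality in $<_{D/e}$, and strict $<_{D/e}$-increase.

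Membership is direct: $\{f(s): s \in y/e\} \subseteq \alpha_y$ is the image of a subset of $Y$ under a function, so has Hartogs number at most $\hrtg(Y) \le \cf(\alpha_y)$ and is bounded below $\alpha_y = \alpha_{y/e}$. For cofinality, given $g \in \Pi\langle\alpha_{y/e}\rangle$, define $\tilde g \in \Pi\bar\alpha$ by $\tilde g(t) := g(t/e)$ and use the witness hypothesis on $\bar\cF$ to pick $f \in \cF_\alpha$ with $\tilde g <_D f$. The bad set $B := \{s : g(s/e) \ge f^{[*]}(s/e)\}$ is $e$-saturated and, because $g(s/e) \ge \sup\{f(r): r \in s/e\} \ge f(t)$ for every $t \in s/e$, it lies inside $\{t : \tilde g(t) \ge f(t)\}$, which is null mod $D$. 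Since $B$ is $e$-saturated, $C := Y \setminus B$ lies in $D$ and satisfies $C/e = (Y/e) \setminus (B/e)$, placing $B/e$ in $\mathrm{dual}(D/e)$, which yields $g <_{D/e} f^{[*]}$.

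The main obstacle is strict $<_{D/e}$-increase, because $\sup$ may collapse strict pointwise inequalities: one can have $f_1(t) < f_2(t)$ for every $t \in s/e$ while $\sup\{f_1(t): t \in s/e\} = \sup\{f_2(t): t \in s/e\}$. My plan to handle this is to apply \ref{e1} to $\bar\cF^{[*]}$ on $Y/e$ with respect to $D/e$; the required weak monotonicity hypothesis reduces, via the observation that strict $f_1^{[*]}(s/e) > f_2^{[*]}(s/e)$ forces some $t \in s/e$ with $f_1(t) > f_2(t)$, to verifying that the corresponding $e$-saturated pullback is null. Claim \ref{e1} then yields a partition $(Y'_1, Y'_2)$ of $Y/e$ and a club $E \subseteq \lambda$, and following the pattern of the proof of \ref{e2}, the flat part $Y'_1$ must be null mod $D/e$: otherwise a common upper bound for $\bar\cF^{[*]}$ on $Y'_1$ would, after pulling back to $Y$ by the argument of the cofinality step, produce a common $<_D$-upper bound for a cofinal tail of $\bar\cF$, contradicting that $\bar\cF$ witnesses ps-tcf. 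Hence $\langle \cF_{e,\alpha} : \alpha \in E\rangle$ is strictly $<_{D/e}$-increasing and cofinal, and a standard reindexing finishes the proof.
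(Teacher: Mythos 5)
Your choice of $e$, the membership check, and the cofinality step all match the paper's proof and are correct (in the cofinality step the bad set really is contained in a $D$-null set, because $g$ is already $e$-invariant). The gap is exactly at the step you flag as the main obstacle. To feed the pushforward sequence into Claim \ref{e1} you need its hypothesis $\boxplus(c)$, i.e. weak monotonicity $f_1^{[*]}\le f_2^{[*]} \bmod D/e$ whenever $f_1<_D f_2$, and the verification you sketch does not deliver it: from $f_1^{[*]}(s/e)>f_2^{[*]}(s/e)$ you only get that the class $s/e$ \emph{meets} the $D$-null set $\{t:f_1(t)>f_2(t)\}$, and the union of all classes meeting a null set (its $e$-saturation) need not be null. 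Concretely, take $Y=\omega\times 2$, $D$ the filter generated by $\omega\times\{0\}$, all $\alpha_y$ equal (so $Y$ is a single $e$-class), $f_1$ equal to $0$ on $\omega\times\{0\}$ and $7$ on $\omega\times\{1\}$, and $f_2$ equal to $1$ and $5$ respectively: then $f_1<_D f_2$ but $f_1^{[*]}\equiv 7>5\equiv f_2^{[*]}$, so the pushforward sequence is not even $\le_{D/e}$-increasing and \ref{e1} does not apply to it. A second, independent problem is that \ref{e1} assumes $\lambda\ge\hrtg(\cP(Y/e))$, whereas \ref{e23} only supplies $\lambda\ge\hrtg(Y)$; the point of \ref{e23} (as opposed to \ref{e2}) is precisely to work under these weaker hypotheses, so a route through \ref{e1} is not available.

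The paper sidesteps both problems with an interleaving argument rather than \ref{e1}: define $H(\alpha)$ as the least $\beta>\alpha$ such that some $f_1\in\cF_\alpha$ and $f_2\in\cF_\beta$ satisfy $f_1^{[*]}<f_2 \bmod D$ (such $\beta$ exists because $f_1^{[*]}\in\Pi\bar\alpha$ and $\bar\cF$ is cofinal), thin out to an increasing continuous sequence $\langle\alpha_i:i<\lambda\rangle$ closed under $H$, and keep only those $f^{[*]}$ with $f\in\cF_{\alpha_i}$ that are $<_D$-dominated by some $g\in\cF_{H(\alpha_i)}$. Strict increase of the resulting sets then follows from the sandwich $h_1^{[*]}\le_D g<_D h_2\le h_2^{[*]}$ (the last inequality holding pointwise everywhere), which never requires monotonicity of the $\sup$ operation with respect to $<_D$. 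You should replace the appeal to \ref{e1} by this device; the rest of your argument can stand.
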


\begin{PROOF}{\ref{e23}}
Let $e = \text{ eq}(\bar\alpha) = \{(y_1,y_2):y_1 \in Y,y_2 \in Y$ and
$\alpha_{y_1} = \alpha_{y_2}\}$.  For each $f \in \Pi \bar\alpha$ let
the function $f^{[*]} \in \Pi \bar\alpha$ be defined by $f^{[*]}(y) =
\sup\{f(z):z \in y/e\}$.  Clearly $f^{[*]}$ is a function from
$\prod\limits_{y \in Y} (\alpha_y +1)$ and it belongs to $\Pi
\bar\alpha$ as $y \in Y \Rightarrow \text{ cf}(\alpha_y) \ge
\hrtg(y)$.  Let $H:\lambda \rightarrow \lambda$ be: $H(\alpha) =
\text{ min}\{\beta < \lambda:\beta > \alpha$ and there are $f_1 \in
\cF_\alpha$ and $f_2 \in \cF_\beta$ such that $f^{[*]}_1 < f_2$ mod $D\}$,
well defined as $\cF$ is cofinal in $(\Pi \bar\alpha,<_D)$.  We
choose $\alpha_i < \lambda$ by induction on $i$ by: $\alpha_i =
\cup\{H(\alpha_j) +1:j < i\}$.  So $\alpha_0 = 0$ and $\langle
\alpha_i:i < \lambda\rangle$ is increasing continuous.  Let $\cF'_i =
\{f^{[*]}:f \in g_{\alpha_i}$ and there is $g \in \cF_{H(\alpha_i)} =
\cF_{\alpha_{i+1}-1}$ such that $f < g$ mod $D\}$. 

So
\mn
\begin{enumerate}
\item[$(*)_1$]  $\cF'_i \subseteq \Pi\{f \in \Pi
\bar\alpha:\text{eq}(f)$ refine $\text{eq}(\bar\alpha)\}$.
\end{enumerate}
\mn
[By its choice]
\mn
\begin{enumerate}
\item[$(*)_2$]  $\cF'_i$ is non-empty.
\end{enumerate}
\mn
[Why?  By the choice of $H(\alpha_i)$.]
\mn
\begin{enumerate}
\item[$(*)_3$]  if $i(1) < i(2) < \lambda$ and $h_\ell \in
\cF'_{i_\ell}$ for $\ell=1,2$ then $g_1 < g_2$ mod $D$.
\end{enumerate}
\mn
[Why?  Let $g_1 \in F_{H(\alpha_{i(1)}})$ be such that 
$h^{[*]}_1 \le g_1$ mod $D$, exists by the definition of
$\cF^1_{i(1)}$.  But $H(\alpha_{i(1)}) < 
\alpha_{i(1)+1} \le \alpha_{i(2)}$ hence 
$g_1 \le h_2$ mod $D$ so together we are done.]
\mn
\begin{enumerate}
\item[$(*)_4$]   $\bigcup\limits_{i < \lambda} \cF'_i$ is cofinal in
$(\Pi \bar\alpha,<_D)$.
\end{enumerate}
\mn
[Easy, too.]

Lastly, let $\cF^+_i = \{f/e:e \in \cF'_i\}$ where $f/e \in
{}^{Y/e}\text{Ord}$, is defined by $(f/e)(y/e) = f(y)$, clearly well
defined.
\end{PROOF}

\begin{claim}
\label{e29}
Assume {\rm AC}$_Y$ and $\bar\alpha_\ell = \langle \alpha^\ell_y:y
   \in Y\rangle \in {}^Y${\rm Ord} for $\ell=1,2$. If $y \in Y
   \Rightarrow \cf(\alpha^1_y) = \cf(\alpha^2_y)$
 \then \, $\lambda =$ {\rm ps-tcf}$(\Pi \bar\alpha_1,<_D)$ iff $\lambda =
\ps-\tcf(\Pi \bar\alpha_2,<_D)$.
\end{claim}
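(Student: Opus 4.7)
The plan is to define a translation $T : \Pi\bar\alpha_1 \to \Pi\bar\alpha_2$ induced by aligned cofinal sequences in the factors, transfer a pseudo-true-cofinality witness under $T$, and then repair the loss of strict monotonicity using Claim~\ref{e1}. By symmetry, only one direction of the iff needs to be proved.

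First, using $\AC_Y$, for each $y \in Y$ and each $\ell \in \{1,2\}$ fix a strictly increasing continuous cofinal sequence $\langle \beta^\ell_{y,i} : i < \theta_y\rangle$ in $\alpha^\ell_y$, where $\theta_y := \cf(\alpha^1_y) = \cf(\alpha^2_y)$. For $f \in \Pi\bar\alpha_\ell$ set $i^\ell_f(y) := \min\{i < \theta_y : f(y) \le \beta^\ell_{y,i}\}$, and define $T : \Pi\bar\alpha_1 \to \Pi\bar\alpha_2$ by $T(f)(y) := \beta^2_{y, i^1_f(y)}$; this is well defined since $\beta^2_{y, i^1_f(y)} < \alpha^2_y$.

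Given a witness $\bar\cF^1 = \langle \cF^1_\alpha : \alpha < \lambda\rangle$ for $\lambda = \text{ps-tcf}(\Pi\bar\alpha_1,<_D)$, set $\cF^2_\alpha := \{T(f) : f \in \cF^1_\alpha\}$. Each $\cF^2_\alpha$ is a non-empty subset of $\Pi\bar\alpha_2$, and the sequence is $\le_D$-weakly monotone since $f_1 \le_D f_2 \Rightarrow i^1_{f_1} \le_D i^1_{f_2} \Rightarrow T(f_1) \le_D T(f_2)$. Cofinality in $(\Pi\bar\alpha_2,<_D)$ is verified as follows: for $g \in \Pi\bar\alpha_2$, the element $h(y) := \beta^1_{y,i^2_g(y)+1}$ of $\Pi\bar\alpha_1$ is strictly $<_D$-dominated by some $f \in \cF^1_\alpha$; this forces $i^1_f(y) > i^2_g(y) + 1$ on a $D$-large set, hence $T(f)(y) = \beta^2_{y,i^1_f(y)} > \beta^2_{y,i^2_g(y)+1} > g(y)$ there, so $g <_D T(f) \in \cF^2_\alpha$.

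The main obstacle is the loss of strict $<_D$-increase: two functions $f_1 <_D f_2$ in different $\cF^1_\alpha$'s can fall in the same half-open interval $(\beta^1_{y,i-1},\beta^1_{y,i}]$ on a $D$-large set, causing $T(f_1) = T(f_2)$ there. This is resolved by applying Claim~\ref{e1} to $\bar\cF^2$: one obtains a partition $(Y_1,Y_2)$ of $Y$ and a club $E \subseteq \lambda$ as in $\oplus$ there. The option $Y_1 \in D^+$ with $\bar\cF^2 \restriction E$ constant mod $D + Y_1$ contradicts cofinality: fix $\alpha_0 \in E \setminus \min(E)$ and $T(f_0) \in \cF^2_{\alpha_0}$, and let $g \in \Pi\bar\alpha_2$ pointwise strictly exceed $T(f_0)$ on $Y_1$; then any later $v' \in \cF^2_{\beta'}$, $\beta' \in E$, would have to $<_D$-dominate $g$ while being equal to $T(f_0)$ on $Y_1$ mod $D$, which is impossible. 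Hence $Y_2 \in D^+$ and $\bar\cF^2 \restriction E$ is $<_{D+Y_2}$-increasing; re-indexing $E$ by its order type produces the desired witness for $\lambda = \text{ps-tcf}(\Pi\bar\alpha_2,<_D)$. The residual technical point is verifying $\lambda \ge \hrtg(\cP(Y))$ needed by Claim~\ref{e1}, which follows from the regularity of $\lambda$ and the constraints imposed on $\lambda$ by \ref{r15}(1) given the existence of the original witness $\bar\cF^1$.
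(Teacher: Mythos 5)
The paper itself dismisses this claim with ``Straight.'', so there is no official argument to compare against; your translation-and-repair scheme is a legitimate way to make the statement precise, and most of it is sound: the map $T$ is well defined under $\AC_Y$, it preserves $\le_D$, your cofinality computation (dominating $h(y)=\beta^1_{y,i^2_g(y)+1}$ inside $\Pi\bar\alpha_1$ and pushing forward) is correct, and you are right that strictness genuinely cannot be preserved by any pointwise translation (e.g.\ $\alpha^1_y=\omega\cdot\omega$, $\alpha^2_y=\omega$), so some thinning-out step is unavoidable. The derivation of a contradiction from the alternative $Y_1\in D^+$ of Claim \ref{e1} is also fine.

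The gap is in your last sentence. Claim \ref{e1} requires $\lambda\ge\hrtg(\cP(Y))$ (clause $\boxplus(a)$; its proof uses this to bound suprema indexed by families of subsets of $Y$), and you justify this via \ref{r15}(1). But \ref{r15}(1) yields $\theta\ge\hrtg(Z)$ only under $\AC_Z$ \emph{together with} the hypothesis $\cf(\alpha_t)\ge\hrtg(Z)$ for all $t$; to get $\lambda\ge\hrtg(\cP(Y))$ you would need $\AC_{\cP(Y)}$ and $\cf(\alpha^1_t)\ge\hrtg(\cP(Y))$, and \ref{e29} assumes neither --- it assumes only $\AC_Y$ and places no lower bound at all on the cofinalities $\cf(\alpha^\ell_y)$. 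The inequality can genuinely fail: for $D$ a principal ultrafilter concentrated on $\{y_0\}$ one gets $\lambda=\cf(\alpha^1_{y_0})$, which may be $\aleph_0<\hrtg(\cP(Y))$. So as written the appeal to \ref{e1} is not licensed; your argument establishes the claim only under the additional hypotheses $t\in Y\Rightarrow\cf(\alpha^\ell_t)\ge\hrtg(\cP(Y))$ and $\AC_{\cP(Y)}$ (these are standing assumptions in the neighbouring claims \ref{e2} and \ref{e23}, so this may well be the intended reading, but it must be said explicitly), and for small $\lambda$ a separate argument, or a version of \ref{e1} not needing $\lambda\ge\hrtg(\cP(Y))$, is still required. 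A minor further point: both $h(y)=\beta^1_{y,i^2_g(y)+1}$ and the function $g$ strictly exceeding $T(f_0)$ on $Y_1$ presuppose that every $\alpha^\ell_y$ is a nonzero limit ordinal; successor and zero values should be excluded or handled separately.
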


\begin{proof}
Straight.
\end{proof}

\noindent
Now we come to the heart of the matter
\begin{theorem}
\label{e3}
\underline{The Composition Theorem}
[Assume {\rm AC}$_Z$ and $\kappa \ge \aleph_0$]

We have $\lambda =$ {\rm ps-tcf}$(\prod\limits_{(i,j) \in Y}
\lambda_{i,j},<_D)$ and $D$ is a $\kappa$-complete filter on $Y$ \when \,:
\mn
\begin{enumerate}
\item[$(a)$]  $E$ is a $\kappa$-complete filter on $Z$
\sn
\item[$(b)$]  $\langle \lambda_i:i \in Z\rangle$ is a sequence of
regular cardinals
\sn
\item[$(c)$]  $\lambda =$ {\rm ps-tcf}$(\prod\limits_{i \in Z}
\lambda_i,<_E)$
\sn
\item[$(d)$]  $\bar Y = \langle Y_i:i \in Z\rangle$
\sn
\item[$(e)$]  $\bar D = \langle D_i:i \in Z\rangle$
\sn
\item[$(f)$]  $D_i$ is a $\kappa$-complete filter on $Y_i$
\sn
\item[$(g)$]  $\bar\lambda = \langle \lambda_{i,j}:i \in Z,j \in
Y_i\rangle$ is a sequence of regular cardinals (or just limit ordinals)
\sn
\item[$(h)$]  $\lambda_i =$ {\rm ps-tcf}$(\prod\limits_{j \in Y_i}
\lambda_{i,j},<_{D_i})$
\sn
\item[$(i)$]  $Y = \{(i,j):j \in Y_i$ and $i \in Z\}$
\sn
\item[$(j)$]  $D = \{A \subseteq Y$: for some $B \in E$ we have $i \in
B \Rightarrow \{j:(i,j) \in A\} \in D_i\}$.
\end{enumerate}
\end{theorem}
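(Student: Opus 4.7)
The plan is to follow the classical pcf composition argument: build a witnessing sequence $\langle \cH_\alpha:\alpha<\lambda\rangle$ for $\lambda=\text{ps-tcf}(\prod_{(i,j)\in Y}\lambda_{i,j},<_D)$ by pasting together, for each $\alpha$, elements of the outer witness $\cG_\alpha\subseteq\prod_i\lambda_i$ with fiberwise choices from inner witnesses $\cF^i_\beta\subseteq\prod_j\lambda_{i,j}$. Before anything else I would verify that $D$ (as defined in clause (j)) is a $\kappa$-complete filter on $Y$: being a proper filter is immediate, and for $\kappa$-completeness, given $\zeta<\kappa$ and $\langle A_\xi:\xi<\zeta\rangle$ in $D$, the \emph{canonical} sets $B_\xi:=\{i\in Z:\{j:(i,j)\in A_\xi\}\in D_i\}$ lie in $E$, so $B:=\bigcap_\xi B_\xi\in E$ by $\kappa$-completeness of $E$, and for $i\in B$ the slice $\{j:(i,j)\in\bigcap_\xi A_\xi\}=\bigcap_\xi\{j:(i,j)\in A_\xi\}\in D_i$ by $\kappa$-completeness of $D_i$. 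No choice is needed here because the $B_\xi$ are canonical rather than arbitrary witnesses.

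Next I would fix a witness $\bar\cG=\langle\cG_\alpha:\alpha<\lambda\rangle$ for clause (c), and invoke AC$_Z$ once to select a sequence $\langle\bar\cF^i:i\in Z\rangle$ of witnesses for clause (h), where $\bar\cF^i=\langle\cF^i_\beta:\beta<\lambda_i\rangle$. For $\alpha<\lambda$ set
\[
\cH_\alpha:=\bigl\{\,h\in\prod\nolimits_{(i,j)\in Y}\lambda_{i,j}:\ \exists g\in\cG_\alpha,\ \exists\bar h\in\prod\nolimits_{i\in Z}\cF^i_{g(i)},\ h(i,j)=h_i(j)\,\bigr\}.
\]
The sequence $\langle\cH_\alpha:\alpha<\lambda\rangle$ then exists canonically from the fixed data; each $\cH_\alpha$ is nonempty because for any $g\in\cG_\alpha$, AC$_Z$ yields an element of $\prod_i\cF^i_{g(i)}$. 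Monotonicity is the standard Fubini estimate: if $\alpha_1<\alpha_2$ and $h^\ell\in\cH_{\alpha_\ell}$ arise from $g^\ell\in\cG_{\alpha_\ell}$ and $\bar h^\ell$, then $B=\{i:g^1(i)<g^2(i)\}\in E$, and for $i\in B$ the sets $C_i=\{j:h^1_i(j)<h^2_i(j)\}$ lie in $D_i$ (because $\bar\cF^i$ is $<_{D_i}$-increasing), so $\{(i,j):i\in B,\ j\in C_i\}\in D$ witnesses $h^1<_D h^2$.

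For cofinality, given $f\in\prod_{(i,j)}\lambda_{i,j}$ I would canonically define $g_f\in\prod_i\lambda_i$ by
\[
g_f(i):=\min\bigl\{\beta<\lambda_i:\ \exists h\in\cF^i_\beta,\ f\rest Y_i<_{D_i}h\bigr\};
\]
this is well-defined with no extra choice (we take the minimum), and $g_f(i)<\lambda_i$ exists by cofinality of $\bar\cF^i$ modulo $D_i$. Then cofinality of $\bar\cG$ supplies some $\alpha$ and $g\in\cG_\alpha$ with $g_f<_E g$; pick $\bar h\in\prod_i\cF^i_{g(i)}$ (one more use of AC$_Z$) and form the corresponding $h\in\cH_\alpha$. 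On the $E$-large set $\{i:g_f(i)<g(i)\}$, picking any $h^*\in\cF^i_{g_f(i)}$ gives $f\rest Y_i<_{D_i}h^*<_{D_i}h_i$, so $f\rest Y_i<_{D_i}h_i$, and another Fubini estimate yields $f<_D h$.

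The main obstacle, modest as it is, lies in keeping the choice budget strictly inside AC$_Z$: the definitions of $D$, $g_f$, and $\cH_\alpha$ must all be made canonically (by taking mins, or by unions over all valid witnesses) rather than by arbitrary selections, so that the single pre-selected family $\langle\bar\cF^i:i\in Z\rangle$ and per-$g$ applications of AC$_Z$ suffice. Once that discipline is observed, the argument is the standard Fubini composition familiar from ZFC pcf theory, adapted to the pseudo-true-cofinality setting by working with the nonempty layer sets $\cF^i_\beta$ rather than with single functions.
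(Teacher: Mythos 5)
Your proposal is correct and follows essentially the same route as the paper's proof: a single application of AC$_Z$ to fix the inner witnesses $\langle\bar\cF^i:i\in Z\rangle$, the pasted layer sets $\cH_\alpha$ (the paper's $\cF_\beta$), the Fubini estimate for monotonicity, and the canonical minimum $g_f(i)$ for cofinality, with further AC$_Z$ uses only to realize nonemptiness. The only wording slip is ``picking any $h^*\in\cF^i_{g_f(i)}$'' where the definition of $g_f(i)$ only guarantees \emph{some} such $h^*$ with $f\rest Y_i<_{D_i}h^*$; since one witness suffices for the chain $f\rest Y_i<_{D_i}h^*<_{D_i}h_i$, the argument is unaffected.
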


\begin{PROOF}{\ref{e3}}
\mn
\begin{enumerate}
\item[$(*)_0$]  $D$ is a $\kappa$-complete filter on $Y$.
\end{enumerate}
\mn
[Why?  Straight (and do not need any choice).]

Let $\langle \cF_{i,\alpha}:\alpha < \lambda_i,i \in Z\rangle$ be such
that
\mn
\begin{enumerate}
\item[$(*)_1$]  $(a) \quad \bar{\cF}_i = \langle \cF_{i,\alpha}:\alpha <
\lambda_i\rangle$ witness $\lambda_i =$ {\rm ps-tcf}$(\prod\limits_{j
\in y_i} \lambda_{i,j},<_{D_i})$
\sn
\item[${{}}$]  $(b) \quad \cF_{i,\alpha} \ne \emptyset$.
\end{enumerate}
\mn
[Why?  Exists by clause (h) of the assumption and AC$_Z$, for clause
(b) recall \cite[5.6]{Sh:938}.]

By clause (c) of the assumption let $\bar{\cG}$ be such that
\mn
\begin{enumerate}
\item[$(*)_2$]  $(a) \quad \bar{\cG} = \langle \cG_\beta:
\beta < \lambda \rangle$
witness $\lambda =$ {\rm ps-tcf}$(\prod\limits_{i \in Z}
\lambda_i,<_E)$
\sn
\item[${{}}$]  $(b) \quad \cG_\beta \ne \emptyset$ for $\beta < \lambda$.
\end{enumerate}
\mn
Now for $\beta < \lambda$ let
\mn
\begin{enumerate}
\item[$(*)_3$]  $\cF_\beta := \{f:f \in \prod\limits_{(i,j) \in Y}
\lambda_{i,j}$ and for some $g \in \cG_\beta$ and $\bar h = \langle
h_i:i \in Z  \rangle \in \prod\limits_{i \in Z} \cF_{i,g(i)}$ we have
$(i,j) \in y \Rightarrow f((i,j)) = h_i(j)\}$
\sn
\item[$(*)_4$]  the sequence $\langle \cF_\beta:\beta < \lambda\rangle$
is well defined (so exists)
\sn
\item[$(*)_5$]  if $\beta_1 < \beta_2,f_1 \in \cF_{\beta_1}$ and $f_2
\in \cF_{\beta_2}$ \then \, $f^*_1 <_D f^*_2$.
\end{enumerate}
\mn
[Why?  Let $g_\ell \in \cG_{\beta_\ell}$ and $\bar h_\ell = \langle
h^\ell_i:i \in Z \rangle \in \prod\limits_{i \in Z} \cF_{i,g_\ell(i)}$,
witness $f_\ell \in \cF_{\beta_\ell}$ for $\ell=1,2$.  As $\beta_1 <
\beta_2$ by $(*)_2$ we have $B := \{i \in Z:g_1(i) < g_2(i)\} \in
E$.  For each $i \in B$ we know that $g_1(i) < g_2(i) < 
\lambda_i$ and as $h^1_i \in \cF_{i,g_i(i)},h^2_i \in \cF_{i,g_2(i)}$,
recalling the choice of $\langle \cF_{i,\alpha}:\alpha < \lambda_i
\rangle$, see $(*)_1$, we have $A_i \in D_i$ where for every $i \in Z$
we let $A_i := \{j \in Y_i:h^1_i(j) < h^2_i(j)\}$.  As 
$\bar h_1,\bar h_2$ exists clearly $\langle
A_i:i \in Z\rangle$ exist hence $A = \{(i,j):i \in B$ and $j \in
A_i\}$ is a well defined subset of $Y$ and it belongs to $D$ by the
definition of $D$.

Lastly $(i,j) \in A \Rightarrow f_1((i,j)) < f_2((i,j))$ shown above;
so by the definition of $D$ we are done.]
\mn
\begin{enumerate}
\item[$(*)_6$]   for every $\beta < \lambda$ the set $\cF_\beta$ is
non-empty.
\end{enumerate}
\mn
[Why?  Recall $\cG_\beta \ne \emptyset$ by $(*)_2$ and let $g \in
\cG_\beta$.  As $\langle \cF_{i,g(i)}:i \in Z\rangle$ is a sequence of
non-empty sets, and we are assuming AC$_Z$ there is a sequence
$\langle h_i:i \in Z\rangle \in \prod\limits_{i \in Z} \cF_{i,g(i)}$.  Let $f$
be the function with domain $Y$ defined by $f((i,j)) = h_i(j)$; so
$g,\bar h$ witness $f \in \cF_\beta$, so 
$\cF_\beta \ne \emptyset$ as required.]
\mn
\begin{enumerate}
\item[$(*)_7$]   if $f_* \in \prod\limits_{(i,j) \in Y} \lambda_{i,j}$
\then \, for some $\beta < \lambda$ and $f \in \cF_\beta$ we have
\newline
 $f_* < f$ mod $D$.
\end{enumerate}
\mn
[Why?  We define $\bar f = \langle f^*_i:i \in Z\rangle$ as follows:
$f^*_i$ is the function with domain $Y_i$ such that

\[
j \in Y_i \Rightarrow f^*_i(j) = f((i,j)).
\]

\mn
Clearly $\bar f$ is well defined and for each $i,f^*_i \in \prod_{j
\in Y_i} \lambda_{i,j}$ hence by $(*)_1(a)$ for some $\alpha <
\lambda_i$ and $h \in \cF_{i,\alpha}$ we have $f^*_i < h$ mod $D_i$
and let $\alpha_i$ be the first such $\alpha$ so $\langle \alpha_i:i \in Z
\rangle$ exists.  

By the choice of $\langle \cG_\beta:\beta < \lambda\rangle$ there are
$\beta < \lambda$ and $g \in \cG_\beta$ such that $\langle \alpha_i:i
\in Z\rangle < g$ mod $E$ hence $A := \{i \in Z:\alpha_i < g(i)\}$
belongs to $E$.
So $\langle \cF_{i,g(i)}:i \in Z\rangle$ is a sequence of non-empty sets
hence recalling AC$_Z$ there is a sequence $\bar h = \langle h_i:i \in
Z \rangle \in \prod\limits_{i \in Z} \cF_{i,g(i)}$.  By the property of
$\langle \cF_{i,\alpha}:\alpha < \lambda_i\rangle$, we have $i \in A
\Rightarrow f^*_i < h_i$ mod $D_i$.

Lastly, let $f \in \prod\limits_{(i,j) \in Y} \lambda_{i,j}$ be defined by
$f((i,j)) = h_i(j)$.  Easily $g,\bar h$ witness that $f \in \cF_\beta$,
and by the definition of $D$, recalling $A \in E$ and the choice of
$\bar h$ we have $f_* < f$ mod $D$, so we are done.]

Together we are done proving the theorem.
\end{PROOF}

\begin{conclusion}
\label{e5}
\underline{The pcf closure conclusion}
Assume AC$_{\cP(\ga)}$.  We have $\gc =$ {\rm ps-pcf}$_{\aleph_1-\comp}(\gc)$ 
\when \,:
\mn
\begin{enumerate}
\item[$(a)$]   $\ga$ a set of regular cardinals
\sn
\item[$(b)$]   $\hrtg(\cP(\ga)) < \text{ min}(\ga)$
\sn
\item[$(c)$]   $\gc =$ {\rm ps-pcf}$_{\aleph_1-\comp}(\ga)$.
\end{enumerate}
\end{conclusion}

\begin{PROOF}{\ref{e5}}
Assume $\lambda \in$ {\rm ps-pcf}$_{\aleph_1-\comp}(\gc)$, hence
there is $E$ an $\aleph_1$-complete filter on $\gc$ such that $\lambda
=$ {\rm ps-tcf}$(\Pi \gc,<_E)$.  As we have AC$_{\cP(\ga)}$ by
\ref{r16} (as the $D$ there is unique) 
there is a sequence $\langle D_\theta:\theta \in
\gc\rangle,D_\theta$ an $\aleph_1$-complete filter on $\ga$ such that
$\theta =$ {\rm ps-tcf}$(\Pi \ga,<_{D_\theta})$, also by \ref{r19}
there is a function $h$ from $\cP(\ga)$ onto $\gc$, let $E_1 = \{S
\subseteq \cP(\ga):\{\theta \in \gc:h^{-1}\{\theta\} \subseteq S\} 
\in E\}$.  Similarly to \ref{e2} with
$\cP(Y)$ here standing for $Y$ there, we have $\lambda =$ {\rm ps-tcf}
$(\Pi\{h(A):A \in \cP(\ga)\},<_{E_1})$ and $E_1$ is an
$\aleph_1$-complete filter on $\cP(\ga)$.

Now we apply Theorem \ref{e3} with $E_1,\langle D_{h(\gb)}:\gb \in
\cP(\ga)\rangle,\lambda,\langle h(\gb):\gb \in \cP(\ga)\rangle,\langle
\theta:\theta \in \ga\rangle$ here standing for $E,\langle D_i:i \in
Z\rangle,\lambda,\langle \lambda_i:i \in Z \rangle,\langle
\lambda_{i,j}:j \in Y_i\rangle$ for every $j \in Z$ (constant here).
We get a filter $D_1$ on $Y = \{(\gb,\theta):\gb \in \cP(\ga),\theta
\in \ga\rangle$ such that $\lambda = \ps-\tcf(\Pi\{\theta:(b,\theta) \in
Y\},<_{D_1})$.

Now $|Y| = |\cP(\ga)|$ as $\aleph_0 \le |\ga|$ or $\ga$ finite and all
is trivial so applying \ref{e2} again we get an $\aleph_1$-complete
filter $D$ on $\ga$ such that $\lambda =$ {\rm ps-tcf}$(\Pi \ga,<_D)$,
so we are done.
\end{PROOF}

\begin{theorem}
\label{e7}
Assume {\rm AC}$_{\gc}$ and {\rm AC}$_{\cP(\ga)}$.
\Then \, $\gc =$ {\rm ps-pcf}$_{\aleph_1-\comp}(\gc)$ has 
a closed generating sequence for
$\aleph_1$-complete filters (see below) \when \,:
\mn
\begin{enumerate}
\item[$(a)$]   $\ga$ is a set of regular cardinals
\sn
\item[$(b)$]   $\hrtg(\cP(\ga)) < \text{\rm min}(\ga)$
\sn
\item[$(c)$]   $\gc =$ {\rm ps-pcf}$_{\aleph_1-\comp}(\ga)$.
\end{enumerate}
\end{theorem}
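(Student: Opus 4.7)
The plan is to construct the closed generating sequence $\bar\gb = \langle \gb_\lambda : \lambda \in \gc\rangle$ using the Generator Existence Claim \ref{r18}(2) at the level of $\ga$ (where the hypothesis AC$_{\cP(\ga)}$ is directly applicable), then using AC$_\gc$ to assemble the full sequence, and finally iterating to achieve pcf-closure by means of the Composition Theorem \ref{e3} and the pcf Closure Conclusion \ref{e5}.

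First I would apply \ref{r18}(2) to $Y := \ga$ with $\bar\alpha$ the identity on $\ga$: its size hypothesis $\cf(\theta) \ge \hrtg(\cP(\ga))$ for $\theta \in \ga$ is precisely (b), and AC$_{\cP(\ga)}$ is given. This yields, for each $\lambda \in \gc$, a raw generator $\gb'_\lambda \subseteq \ga$ satisfying conclusions (A)--(C) of \ref{r18}(2). Using AC$_\gc$, pick $\langle \gb'_\lambda : \lambda \in \gc\rangle$ simultaneously, noting that $\gc$ is well-orderable via AC$_{\cP(\ga)}$ and the surjection $\cP(\ga) \twoheadrightarrow \gc$ supplied by \ref{r19}. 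A principal-filter argument confines $\gb'_\lambda \subseteq \ga \cap (\lambda + 1)$: any $\mu \in \gb'_\lambda$ with $\mu > \lambda$ would witness $\mu \in {\rm ps-pcf}_{\aleph_1-\comp}(\ga \rest \gb'_\lambda)$ via the principal $\aleph_1$-complete filter at $\{\mu\}$, contradicting $\gb'_\lambda \in J^{\aleph_1-\comp}_{\le\lambda}[\ga]$.

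Second, to enforce closure I would define $\gb_\lambda$ by transfinite recursion on $\lambda \in \gc$, setting
\[
\gb_\lambda := \gb'_\lambda \cup \bigcup\{\gb_\mu : \mu \in \gc \cap {\rm ps-pcf}_{\aleph_1-\comp}(\gb'_\lambda),\ \mu < \lambda\}.
\]
The Composition Theorem \ref{e3} guarantees ${\rm ps-pcf}_{\aleph_1-\comp}(\gb'_\lambda) \subseteq \gc$: any filter on $\gb'_\lambda$ realizing some $\mu$ composes with the canonical $\aleph_1$-complete filters on $\ga$ supplied by \ref{r16} which witness each $\theta \in \gb'_\lambda$, yielding a filter on $\ga$ with ps-tcf $\mu$. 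Property (A) of \ref{r18}(2) then gives ${\rm ps-pcf}_{\aleph_1-\comp}(\gb'_\lambda) \subseteq \lambda + 1$, so the recursion is well-founded. The pcf Closure Conclusion \ref{e5} ensures that ${\rm ps-pcf}_{\aleph_1-\comp}$ is idempotent on $\gc$, stabilizing the closure. By construction $\mu \in \gc \cap \lambda \cap {\rm ps-pcf}_{\aleph_1-\comp}(\gb_\lambda) \Rightarrow \gb_\mu \subseteq \gb_\lambda$, the closure requirement.

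The main obstacle is verifying that $\gb_\lambda \in J^{\aleph_1-\comp}_{\le\lambda}[\ga]$ persists through the iterated unions: since the $J^{\aleph_1-\comp}$-ideals are only $\aleph_1$-complete, a union of uncountably many sub-generators $\gb_\mu$ need not a priori stay in the ideal. Overcoming this requires a careful application of \ref{e5} to show that ${\rm ps-pcf}_{\aleph_1-\comp}(\gb_\lambda) = {\rm ps-pcf}_{\aleph_1-\comp}(\gb'_\lambda)$ after closure, so no new pcf values above $\lambda$ are introduced. Carrying out this verification using only AC$_\gc$ and AC$_{\cP(\ga)}$ is the technical crux; once accomplished, the sequence $\bar\gb$ satisfies both the generating and the pcf-closure properties required.
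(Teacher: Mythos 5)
There is a genuine gap, and it comes in two parts. First, the object you are building is not what the theorem asks for. The conclusion is that $\gc=\text{ps-pcf}_{\aleph_1-\comp}(\gc)$ has a \emph{closed} generating sequence, i.e.\ (Definition \ref{e10}(1),(3)) a sequence $\langle \gb^*_\lambda:\lambda\in\gc\rangle$ of subsets of $\gc$ generating the ideals $J_{<\lambda}[\gc]$ and satisfying $\gb^*_\lambda=\gc\cap\text{ps-pcf}_{\aleph_1-\comp}(\gb^*_\lambda)$. Your recursion produces subsets of $\ga$ and aims at the property $\mu\in\text{ps-pcf}_{\aleph_1-\comp}(\gb_\lambda)\cap\lambda\Rightarrow\gb_\mu\subseteq\gb_\lambda$, which is (a variant of) \emph{smoothness}, Definition \ref{e10}(3A), not closedness; your $\gb_\lambda$ need not contain $\ga\cap\text{ps-pcf}_{\aleph_1-\comp}(\gb_\lambda)$, since adding the sets $\gb_\mu$ does not add the cardinals $\mu$ themselves or other new pcf values. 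Note that the paper explicitly flags smoothness as an open issue in Remark \ref{e30}, so the harder property you are chasing is not the one being claimed. Second, the obstacle you name at the end --- that a union over possibly uncountably many $\mu$ of the sets $\gb_\mu$ has no reason to remain in the merely $\aleph_1$-complete ideal $J^{\aleph_1-\comp}_{\le\lambda}[\ga]$ --- is not a technicality to be ``carefully verified''; it is precisely the point where this route breaks, and \ref{e5} does not repair it, because \ref{e5} speaks about idempotence of the ps-pcf operator, not about membership of large unions in $J_{\le\lambda}$.

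The paper's proof sidesteps both problems by a single non-recursive step: take a generating sequence $\langle\gb_\lambda:\lambda\in\gc\rangle$ for $\ga$ from \ref{r19}(3) (using $\AC_{\gc}$), and set $\gb^*_\lambda:=\text{ps-pcf}_{\aleph_1-\comp}(\gb_\lambda)\subseteq\gc$. Closedness of $\gb^*_\lambda$ is then immediate from the idempotence supplied by \ref{e5}, with no transfinite recursion and no uncountable unions. The real work is in showing that $\langle\gb^*_\lambda:\lambda\in\gc\rangle$ \emph{generates} for $\gc$: one projects each witness $\bar{\cF}_\lambda$ for $\lambda=\text{ps-tcf}(\Pi\ga,<_{J_{=\lambda}[\ga]})$ into $\Pi\gb^*_\lambda$ via $f\mapsto f^{[\lambda]}$, checks that the projected families are cofinal and increasing modulo the ideal $J_\lambda$ generated by $\{\gb^*_\theta\cap\gb^*_\lambda:\theta\in\gc\cap\lambda\}$, and verifies $J_\lambda\subseteq J^{\aleph_1-\comp}_{<\lambda}[\gb^*_\lambda]$. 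That projection argument, which your proposal does not contain, is the substantive content of the proof; your use of \ref{r18}(2), \ref{r19} and \ref{e5} as ingredients is on target, but the Composition Theorem \ref{e3} is not where the closure comes from here.
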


\begin{definition}
\label{e10}
For a set $\ga$ of regular cardinals.

\noindent
1) We say $\bar{\gb} = \langle \gb_\lambda:\lambda \in
\text{\rm ps-pcf}_{\aleph_1-\text{\rm com}}(\ga)\rangle$ 
is a generating sequence for $\ga$ \when \,:
\mn
\begin{enumerate}
\item[$(\alpha)$]   $\gb_\lambda \subseteq \ga \subseteq \gc \subseteq$
{\rm ps-pcf}$_{\aleph_1-\comp}(\ga)$
\sn
\item[$(\beta)$]   $J_{< \lambda}[\ga]$ is the $\aleph_1$-complete
ideal in $\ga$ generated by $\{\gb_\theta:\theta \in 
\pcf_{\aleph_1-\comp}(\ga)$ and $\theta < \lambda\}$.
\end{enumerate}
\mn
2) We say $\bar{\cF}$ is a witness for $\bar{\gb} = \langle
\gb_\lambda:\lambda \in \gc \subseteq$ 
{\rm ps-pcf}$_{\aleph_1-\comp}(\ga)\rangle$ \when \, :
\mn
\begin{enumerate}
\item[$(\alpha)$]   $\bar{\cF} = \langle \bar{\cF}_\lambda:\lambda \in
\gc\rangle$
\sn
\item[$(\beta)$]   $\bar{\cF}_\lambda = \langle
\cF_{\lambda,\alpha}:\alpha < \lambda \rangle$ witness {\rm ps-tcf}$(\Pi
\ga,<_{J_{=\lambda}[\ga]})$.
\end{enumerate}
\mn
3) Above $\bar{\gb}$ is closed \when \, $\gb_\lambda = \ga \cap$ 
{\rm ps-pcf}$_{\aleph_1-\comp}(\gb_\lambda)$.

\noindent
3A) Above $\bar{\gb}$ is smooth \when \, $\theta \in \gb_\lambda
\Rightarrow \gb_\theta \subseteq \gb_\lambda$.

\noindent
4) We say above is $\bar{\gb}$ full when $\gc = \ga \cap$ 
{\rm ps-pcf}$_{\aleph_1-\comp}(\ga)$.
\end{definition}

\begin{remark}
\label{e11}
1) Note that \ref{r19} gives sufficient conditions for the existence
of $\bar{\gb}$ as in \ref{e10}(1).

\noindent
2) Of course, Definition \ref{e10} is interesting particularly when
   $\ga =$ {\rm ps-pcf}$_{\aleph_1-\text{com}}(\ga)$.
\end{remark}

\begin{PROOF}{\ref{e7}}
\underline{Proof of \ref{e7}}
\mn
\begin{enumerate}
\item[$(*)_1$]   $\gc =$ {\rm ps-pcf}$_{\aleph_1-\text{com}}(\gc)$.
\end{enumerate}
\mn
[Why?  By \ref{e5} using AC$_{\cP(\ga)}$.]
\mn
\begin{enumerate}
\item[$(*)_2$]   there is a generating sequence $\langle
\gb_\lambda:\lambda \in \gc\rangle$ for $\ga$.
\end{enumerate}
\mn
[Why?  By \ref{r19}(3) using also AC$_{\gc}$.]
\mn
\begin{enumerate}
\item[$(*)_3$]   let $\gb^*_\lambda = 
\text{ ps-pcf}_{\aleph_1-\text{com}}(\gb_\lambda)$ for $\lambda \in \gc$.
\sn
\item[$(*)_4$]   $(a) \quad \bar\gb^* = 
\langle \gb^*_\lambda:\lambda \in \gc\rangle$ is well defined
\sn
\item[${{}}$]   $(b) \quad \gb^*_\lambda \subseteq \gc$
\sn
\item[${{}}$]   $(c) \quad \gb^*_\lambda = \text{
ps-pcf}_{\aleph_1-\text{com}}(\gb^*_\lambda)$
\sn
\item[${{}}$]   $(d) \quad \lambda = \text{ max}(\gb^*_\lambda)$. 
\end{enumerate}
\mn
[Why?  First, $\bar{\gb}^*$ is well defined as $\bar{\gb} = \langle
\gb_\lambda:\lambda \in \gc\rangle$ is well defined.  Second,
$\gb^*_\lambda \subseteq \gc$ as $\gb_\lambda \subseteq \ga$ hence
$\gb^*_\lambda = \text{ ps-pcf}_{\aleph_1-\text{com}}(\gb^*_\lambda)
\subseteq \text{ ps-pcf}_{\aleph_1-\text{com}}(\ga) = \gc$.  Third,
$\gb^*_\lambda = \text{ ps-pcf}_{\aleph_1-\text{com}}(\gb^*_\lambda)$
by Conclusion \ref{e5}, it is easy to check that its assumption holds
recalling $\gb_\lambda \subseteq \ga$.  Fourth, $\lambda \in
\gb^*_\lambda$ as $J_{=\lambda}[\ga]$ witness $\gb^*_\lambda =
 \text{ ps-pcf}_{\aleph_1-\text{com}}(\gb_\lambda) \subseteq
\lambda^+$ as $\gb_\lambda \in J_{< \lambda^+}[\ga]$ and lastly,
max$(\gb^*_\lambda) = \lambda$ by $(*)_2$.]

We can now choose $\bar{\cF}$ such that
\mn
\begin{enumerate}
\item[$(*)_5$]   $(a) \quad \bar{\cF} = \langle \bar{\cF}_\lambda:\lambda
\in \gc \rangle$
\sn
\item[${{}}$]   $(b) \quad \bar{\cF}_\lambda = \langle
\cF_{\lambda,\alpha}:\alpha < \lambda\rangle$
\sn
\item[${{}}$]   $(c) \quad \bar{\cF}_\lambda$ witness $\lambda =$ 
{\rm ps-tcf}$(\Pi \ga,<_{J_{=\lambda}[\ga]})$
\sn
\item[${{}}$]   $(d) \quad$ if $\lambda \in \ga,\alpha < \lambda$ and
$f \in \cF_{\lambda,\alpha}$ then $f(\lambda) = \alpha$.
\end{enumerate}
\mn
[Why?  For each $\lambda$ there is such $\bar{\cF}$ as $\lambda =$ 
{\rm ps-tcf}$(\Pi \ga,<_{J_{=\lambda}[\ga]})$.  
But we are assuming AC$_{\gc}$ and for clause (d) it is easy; in fact
it is enough to use AC$_{\cP(\ga)}$ and $h$ as in \ref{e5}, getting
$\langle \bar{\cF}_{\gb}:\gb \in \cP(\ga)\rangle,\bar{\cF}_{\gb}$
witness $h(\gb) = \text{ ps-tcf}(\Pi \ga,<_{J_{=\lambda}}[\ga])$ and
putting $\langle \bar{\cF}_{\gb}:\gb \in h^{-1}\{\lambda\}\rangle$
together for each $\lambda \in \gc$.]
\mn
\begin{enumerate}
\item[$(*)_6$]   $(a) \quad$ for $\lambda \in \gc$ and $f \in \Pi
\gb_\lambda$ let $f^{[\lambda]} \in \Pi \gb^*_\lambda$ be defined by:
$f^{[\lambda]}(\theta) =$

\hskip25pt $\text{min}\{\alpha < \lambda$: for every $g \in
\cF_{\theta,\alpha}$ we have 

\hskip25pt $f \rest \gb_\lambda \le (g \rest
\gb_\lambda)$ mod $J_{=\theta}[\gb_\lambda]\}$
\sn
\item[${{}}$]   $(b) \quad$ for $\lambda \in \gc$ and $\alpha <
\lambda$ let $\cF^{[*]}_{\lambda,\alpha} = \{f^{[\lambda]}:f \in
\cF_{\lambda,\alpha}\}$.
\end{enumerate}
\mn
Now
\mn
\begin{enumerate}
\item[$(*)_7$]   $(a) \quad f^{[\lambda]} \rest \ga \ge f$ for $f \in
\Pi \gb_\lambda,\lambda \in \gc$
\sn
\item[${{}}$]   $(b) \quad \langle \cF^*_{\lambda,\alpha}:\lambda \in
\gc,\alpha < \lambda \rangle$ is well defined (hence exist)
\sn
\item[${{}}$]   $(c) \quad \cF^*_{\lambda,\alpha} \subseteq \Pi \gb^*_\lambda$.
\end{enumerate}
\mn
[Why?  Obvious.]
\mn
\begin{enumerate}
\item[$(*)_8$]   let $J_\lambda$ be the $\aleph_1$-complete ideal on
$\gb^*_\lambda$ generated by $\{\gb^*_\theta \cap \gb^*_\lambda:\theta \in
\gc \cap \lambda\}$
\sn
\item[$(*)_9$]   $J_\lambda \subseteq J^{\aleph_1-\comp}_{<
\lambda} [\gb^*_\lambda]$.
\end{enumerate}
\mn
[Why?  As for $\theta_0,\dotsc,\theta_n \ldots \in \gc \cap \lambda$ we
have ps-pcf$_{\aleph_1-\comp}(\cup\{\gb^*_{\theta_n}:n <
\omega\}) = \cup\{\gb^*_{\theta_n}:n < \omega\} \in 
J^{\aleph_1-\comp}_{<\lambda}$.]
\mn
\begin{enumerate}
\item[$\odot_1$]   if $\lambda \in \gc$ and $\alpha_1 < \alpha_2 <
\lambda$ and $f_\ell \in \cF_{\lambda,\alpha_\ell}$ for $\ell=1,2$ \then
\, $f^{[\lambda]}_1 \le f^{[\lambda]}_2$ mod $J_\lambda$.
\end{enumerate}
\mn
[Why?  Let $\ga_* = \{\theta \in \gb_\lambda:f_1(\sigma) \ge
f_2(\theta)\}$, hence by the assumption on $\langle
\cF_{\lambda,\alpha}:\alpha < \lambda\rangle$ we have $\ga_* 
\in J^{\aleph_1-\comp}_{< \lambda}[\ga]$, hence 
we can find a seqauence $\langle \theta_n:n <
\bold n \le \omega \rangle$ such that $\theta_n \in \gc \cap \lambda$ and
$\ga_* \subseteq \gb_* := \cup\{\gb_{\theta_n}:n < \bold n\}$
hence $\gc_* =$ {\rm ps-pcf}$_{\aleph_1-\text{com}}(\ga_*) \subseteq
\cup\{\gb^*_{\theta_n}:n < \bold n\} \in J_\lambda$.  So it suffices to
prove $f^{[\lambda]}_1 \rest (\gb^*_\lambda \backslash \gc_*) \le
f^{[\lambda]}_2 \rest (\gb^*_\lambda \backslash \gc_*)$, so let
$\theta \in \gb^*_\lambda \backslash \bigcup\limits_{n}
\gb^*_{\theta_n}$ clearly $\theta \le \lambda$, let $\alpha :=
f^{[\lambda]}_2(\theta)$, so $(\forall g \in \cF_{\theta,\alpha})(f_2
\rest \gb_\lambda) \le (g \rest \gb_\lambda)$ mod
$J_{=\theta}[\gb_\lambda])$ but $\ga_* \in 
J^{\aleph_1-\text{comp}}_{=\theta}[\gb_\lambda]$ so
$(f_1 \rest \gb_*) \le (f_2 \rest \gb_*) \le (g \rest \gb_*)$ mod
$J_{=\theta}[\gb_*]$ hence $f^{[\lambda]}_1(\theta) \le \alpha =
f^{[\lambda]}_2(\theta)$.  So we are done.]
\mn
\begin{enumerate}
\item[$\odot_2$]   if $\lambda \in \gc$ and $g \in \Pi \gb^*_\lambda$ 
\then \, for some $\alpha < \lambda$ and $f \in \cF_{\lambda,\alpha}$ we
have $g < f$ mod $J_\lambda$.
\end{enumerate}
\mn
[Why?  We choose $\langle h_\theta:\theta \in \gb^*_\lambda\rangle$
such that $h_\theta \in \cF_{\theta,g(\theta)}$ for each $\theta \in
\gb^*_\lambda$.  Let $h_1 \in \Pi \gb^*_\lambda$ be defined by $h_1(\kappa) =
\sup\{h^{[\lambda]}_\theta(\kappa):\kappa \in \gb_\theta$ and $\theta
\in \gb^*_\lambda\}$ hence there are $\alpha < \lambda$ and $h_2 \in
\cF_{\lambda,\alpha}$ such that $h_1 \le h_2$ mod $J_{=\lambda}[\ga]$.
Now $f := h_2^{[\lambda]} \in \Pi \gb^*_\lambda$ is as required.] 
\mn
\begin{enumerate}
\item[$\odot_3$]    $\cF_{\lambda,\alpha}$ witness $\lambda =$
{\rm ps-tcf}$(\Pi \gb^*_\lambda,<_{J_\lambda})$.
\end{enumerate}
\mn
[Why?  In $(*)_7 + \odot_1 + \odot_2$.]

So
\mn
\begin{enumerate}
\item[$\odot_4$]    $\bar{\gb}^* = \langle \gb^*_\lambda:\lambda \in
\gc \rangle$ is a generating sequence for $\gc$.
\end{enumerate}
\mn
[Why?  Check.]
\end{PROOF}

\begin{remark}
\label{e30}
Clearly $\bar\gb^*$ is closed, but what about smoooth?  Is this
necessary for generalizing \cite{Sh:460}?
\end{remark}

\begin{discussion}
\label{e31}
Naturally the definition now of $\bar{\cF}$ as in \ref{e10}(2) for 
$\Pi{\ga}$ is more involved where $\bar{\cF} = \langle
\bar{\cF}_\lambda:\lambda \in$ {\rm ps-pcf}$_{\kappa-\text{com}}
(\ga)\rangle,\bar{\cF}_\lambda = \langle
\cF_{\lambda,\alpha}:\alpha < \lambda\rangle$ exemplifies
ps-tcf$(\Pi{\ga},J_{=\lambda}(\ga))$. 
\end{discussion}

\begin{claim}
\label{e33}
Assume
\mn
\begin{enumerate}
\item[$(a)$]  $\ga$ a set of regular cardinals
\sn
\item[$(b)$]   $\kappa$ is regular $> \aleph_0$
\sn
\item[$(c)$]   $\gc = \text{\rm ps-pcf}_{\kappa-\text{\rm com}}(\ga)$
\sn
\item[$(d)$]  {\rm min}$(\ga)$ is $\ge \hrtg(\cP(\gc))$ or at least $\ge
\hrtg(\gc)$
\sn
\item[$(e)$]  $\bar{\cF} = \langle \bar{\cF}_\lambda:\lambda \in
\gc\rangle,\bar{\cF}_\lambda = \langle \cF_{\lambda,\alpha}:\alpha <
\lambda\rangle$ witness $\lambda =$ {\rm ps-tcf}$(\Pi \ga,
<^{J^{\kappa-\comp}}_{=\lambda}[\ga])$.
\end{enumerate}
\mn
Then
\mn
\begin{enumerate}
\item[$\boxplus$]  for every $f \in \Pi \ga$ for some $g \in \Pi \gc$,
if $g \le g_1 \in \Pi \gc$ and $\bar h \in
\Pi\{\cF_{\lambda,g_1(\lambda)}:\lambda \in \gc\}$ 
\then \, $(\exists \gd \in [\gc]^{< \kappa}) (f < 
\sup\{h_\lambda:\lambda \in \gd\})$.
\end{enumerate}
\end{claim}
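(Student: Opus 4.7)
The plan is to define a natural threshold function $g \in \Pi \gc$, translate the conclusion into a covering problem on $\ga$, and solve it by transfinite induction on $\gc$ using the generating sequence supplied by \ref{e7}.

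For each $\lambda \in \gc$ set
\[
g(\lambda) := \min\bigl\{\alpha<\lambda : (\forall h \in \cF_{\lambda,\alpha})\ f <_{J^{\kappa-\comp}_{=\lambda}[\ga]} h\bigr\};
\]
this is well defined and $<\lambda$ by cofinality of $\bar\cF_\lambda$ in $(\Pi\ga, <_{J^{\kappa-\comp}_{=\lambda}[\ga]})$, so $g \in \Pi \gc$. Given any $g_1 \ge g$ in $\Pi \gc$ and any $\bar h = \langle h_\lambda : \lambda \in \gc \rangle$ with $h_\lambda \in \cF_{\lambda, g_1(\lambda)}$, the $<_{J^{\kappa-\comp}_{=\lambda}[\ga]}$-increasingness of $\bar\cF_\lambda$ gives $f <_{J^{\kappa-\comp}_{=\lambda}[\ga]} h_\lambda$ for every $\lambda \in \gc$. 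Writing $X_\lambda \subseteq \ga$ for the generator produced by \ref{r18}, this rewrites as $B_\lambda \cap X_\lambda \in J^{\kappa-\comp}_{<\lambda}[\ga]$, where $B_\lambda := \{\theta \in \ga : f(\theta) \ge h_\lambda(\theta)\}$. The desired conclusion then becomes: find $\gd \in [\gc]^{<\kappa}$ with $\bigcap_{\lambda \in \gd} B_\lambda = \emptyset$, since $f(\theta) < \sup\{h_\lambda(\theta) : \lambda \in \gd\}$ amounts to the existence of some $\nu \in \gd$ with $\theta \notin B_\nu$.

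The core step is a covering lemma proved by transfinite induction on $\lambda \in \gc$: \emph{for every $A \in J^{\kappa-\comp}_{\le \lambda}[\ga]$ there is $\gd_A \in [\gc \cap (\lambda+1)]^{<\kappa}$ with $A \cap \bigcap_{\mu \in \gd_A} B_\mu = \emptyset$.} By \ref{e7}, $J^{\kappa-\comp}_{<\lambda}[\ga]$ is the $\kappa$-complete ideal on $\ga$ generated by $\{X_\mu : \mu \in \gc \cap \lambda\}$. Since $A \setminus X_\lambda$ and $B_\lambda \cap X_\lambda$ both lie in $J^{\kappa-\comp}_{<\lambda}[\ga]$, using $\AC_{\cP(\ga)}$ we cover their union by $\bigcup_{\mu \in e} X_\mu$ for some $e \in [\gc \cap \lambda]^{<\kappa}$; apply the induction hypothesis to each $X_\mu \in J^{\kappa-\comp}_{\le \mu}[\ga]$ to get $\gd_\mu$ and set $\gd_A := \{\lambda\} \cup \bigcup_{\mu \in e} \gd_\mu$. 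Regularity of $\kappa$ gives $|\gd_A| < \kappa$, and a case split on whether $\theta \in A$ lies in $X_\lambda$ and in $B_\lambda$ verifies the intersection is empty.

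To finish: if $\gc$ has a maximum $\lambda^*$ then $\ga \in J^{\kappa-\comp}_{\le \lambda^*}[\ga]$ and $\gd := \gd_\ga$ works. Otherwise, \ref{r19} forces $\cf(\otp \gc) = \aleph_0$, so $\ga \in J^{\kappa-\comp}_{<\sup\gc}[\ga]$ is covered by $\bigcup_{\mu \in e_0} X_\mu$ with $e_0 \in [\gc]^{<\kappa}$, and $\gd := \bigcup_{\mu \in e_0} \gd_{X_\mu}$ works. The main obstacle is keeping $|\gd_A|$ strictly below $\kappa$ throughout the transfinite recursion, whose depth may equal $\otp(\gc)$; regularity of $\kappa$ together with the fact that each step of the recursion takes a $<\kappa$-indexed union of $<\kappa$-sized families is what secures this.
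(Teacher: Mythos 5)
Your first half coincides with the paper's proof: the threshold function $g(\lambda)=\alpha_{f,\lambda}=\min\{\alpha<\lambda: f<h \bmod J^{\kappa-\comp}_{=\lambda}[\ga]$ for every $h\in\cF_{\lambda,\alpha}\}$ and the reduction of $\boxplus$ to the covering statement ``$\ga=\bigcup_{\lambda\in\gd}\{\theta\in\ga: f(\theta)<h_\lambda(\theta)\}$ for some $\gd\in[\gc]^{<\kappa}$'' are exactly what the paper does (you are in fact slightly more careful than the paper about passing from $g$ to an arbitrary $g_1\ge g$). Where you diverge is in how the covering statement is established. The paper does it in one stroke by contradiction: if no such $\gd$ exists, the $\kappa$-complete ideal $J$ generated by the sets $\ga_{f,\lambda}=\{\theta:f(\theta)<h_\lambda(\theta)\}$ is proper, hence (via the no-hole machinery \ref{r22}, i.e.\ extending to a $\kappa$-complete ideal with well-defined pseudo true cofinality, using DC) it extends to some $J_1$ with $\lambda_*=\text{ps-tcf}(\Pi\ga,<_{J_1})$ defined; then $\lambda_*\in\gc$, the Canonical Filter Theorem \ref{r16} gives $J^{\kappa-\comp}_{=\lambda_*}[\ga]\subseteq J_1$, so both $\ga_{f,\lambda_*}$ and its complement lie in $J_1$, a contradiction. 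You instead run a transfinite induction along $\gc$ through a generating sequence $\langle X_\lambda:\lambda\in\gc\rangle$.

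This difference is not cost-free in the choiceless setting. Your induction needs (i) that a sequence of generators exists and (ii) that $J^{\kappa-\comp}_{<\lambda}[\ga]$ is \emph{generated} as a $\kappa$-complete ideal by $\{X_\mu:\mu\in\gc\cap\lambda\}$; this is the content of \ref{r19}(3) and \ref{e7}, whose hypotheses ($\AC_{\cP(\ga)}$, $\AC_\alpha$ for $\alpha<\hrtg(\cP(\ga))$, and crucially $\min(\ga)\ge\hrtg(\cP(\ga))$, with \ref{e7} moreover stated only for $\aleph_1$-completeness) go beyond clause (d) of \ref{e33}, which in its weak form gives only $\min(\ga)\ge\hrtg(\gc)$. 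You also invoke $\AC_{\cP(\ga)}$ explicitly, and your recursion silently chooses a cover $e$ and a family $\langle\gd_\mu:\mu\in e\rangle$ at every stage, which again costs choice. The paper's route needs only $\AC_{\gc}$ (to pick $\langle g_\lambda\rangle$) plus DC, which is presumably why it is the one adopted. If you add the hypotheses of \ref{e7} to the claim, your argument is sound --- the recursion does keep $|\gd_A|<\kappa$ by regularity of $\kappa$, since the induction hypothesis bounds each $\gd_\mu$ and each step takes a union of fewer than $\kappa$ such sets --- but as a proof of \ref{e33} as stated it has a genuine dependency gap.
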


\begin{PROOF}{\ref{e33}}
Let $f \in \Pi \ga$.  For each $\lambda \in \text{
ps-pcf}_{\kappa-\text{com}}(\ga)$ let $\alpha_{f,\lambda} = \text{
min}\{\alpha < \lambda:f < g$ mod $J_{= \lambda}[\ga]$ for every $g
\in \cF_{\lambda,\alpha}\}$ so clearly each $\alpha_f$ is well defined
hence $\bar \alpha = \langle \alpha_{f,\lambda}:\lambda \in$ 
{\rm ps-pcf}$_{\kappa-\text{com}}(\ga)\rangle$ exists.   Let $\langle
g_\lambda:\lambda \in \gc \rangle$ be any sequence from $\Pi
\cF_{\lambda,\alpha_{f,\lambda}}$ at least one exists when AC$_{\gc}$.
Let $\ga_{f,\lambda} = \{\theta \in \ga:f(\theta) <
g_\lambda(\theta)\}$ so $\langle \ga_{f,\lambda}:\lambda \in
\gc\rangle$ exists and we claim that for some $\gd \in [\gc]^{<
\kappa}$ we have $\ga = \cup\{a_{f,\lambda}:\lambda \in \gd\}$. 
Otherwise let $J$ be the $\kappa$-complete ideal on $\ga$
generated by $\{\ga_{f,\lambda}:\lambda \in \gc\}$, it is a
$\kappa$-complete ideal.  So the ``no-hole claim", \ref{r22} applicable by our
assumptions there is a $\kappa$-complete ideal $J$ on $\ga$ extending
$J$ such that $\lambda_* = \text{ ps-tcf}(\Pi \ga,<_{J_1})$ is well
defined.  So $\lambda_* \in \gc$ and $\ga_{f,\lambda_*} \in J_1$, easy
contradiction. 
\end{PROOF}
\newpage

\section {Measuring reduced products}

\subsection {On ps-$\bold T_D(g)$} \

Now we consider some ways to measure the size of ${}^\kappa \mu/D$ and
show that they essentially are equal; see Discussion \ref{r38}.

\begin{definition}
\label{r26}  
Let $\bar\alpha = \langle \alpha_y:y \in Y\rangle \in {}^Y\Ord$ be 
such that $t \in Y \Rightarrow \alpha_t > 0$.

\noindent
1) For $D$ a filter on $Y$ let ps-$\bold T_D(\bar\alpha) 
= \sup\{\hrtg(\bold F):\bold F$ is a family 
of non-empty subsets of $\Pi\bar\alpha$ such that
for every ${\cF}_1 \ne {\cF}_2$ from $\bold F$ we have $f_1 \in
\cF_1 \wedge f_2 \in {\cF}_2 \Rightarrow f_1 \ne_D f_2\}$, recalling
$f_1 \ne_D f_2$ means $\{y \in Y:f_1(y) \ne f_2(y)\} \in D$. 

\noindent
2) Let ps-$\bold T_{\kappa-\comp}(\bar\alpha) =
\sup\{\hrtg(\bold F)$: for some $\kappa$-complete filter $D$ on
$Y$, $\bold F$ is as above for $D\}$.

\noindent
3) If we allow $\alpha_t = 0$ just replace $\Pi \bar\alpha$ by $\Pi^*
\bar\alpha := \{f:f \in \prod\limits_{t}(\alpha_t +1)$ and
   $\{t:f(t) = \alpha_t\} = \emptyset$ mod $D\}$.
\end{definition}

\begin{theorem}
\label{r29}
[{\rm DC + AC}$_{\cP(Y)}$]  Assume that $D$ is a $\kappa$-complete 
filter on $Y$ and $\kappa > \aleph_0$ and $g \in {}^Y$({\rm Ord}  
$\backslash \{0\})$, if $g$ is constantly $\alpha$ we may write
$\alpha$.  The following cardinals are equal \underline{or} at 
least $\lambda_1,\lambda_2,\lambda_3$ are
$\text{\rm Fil}^1_\kappa(D)$-almost equal which means: for $\ell_1,\ell_2 \in
\{1,2,3\}$ we have $\lambda_{\ell_1} \le^{\text{\rm sal}}_S
\lambda_{\ell_2}$ which means if $\alpha < \lambda_{\ell_1}$ then $\alpha$ is
included in the union of $S$ sets each of
order type $< \lambda_{\ell_2}$:
\mn
\begin{enumerate}
\item[$(a)$]  $\lambda_1 = \sup\{|\rk_{D_1}(g)|^+:D_1 \in$ 
{\rm Fil}$^1_\kappa(D)\}$
\sn
\item[$(b)$]  $\lambda_2 = \sup\{\lambda^+$: there are $D_1 \in$ 
{\rm Fil}$^1_\kappa(D)$ and a $<_{D_1}$-increasing sequence $\langle
\cF_\alpha:\alpha < \lambda\rangle$ such that $\cF_\alpha \subseteq
\prod\limits_{y \in Y} g(y)$ is non-empty$\}$
\sn
\item[$(c)$]  $\lambda_3 = \sup\{\text{\rm ps}-\bold T_{D_1}(g):
D_1 \in \text{\rm Fil}^1_\kappa(D)\}$.
\end{enumerate}
\end{theorem}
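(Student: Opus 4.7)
The plan is to establish the inequalities among $\lambda_1,\lambda_2,\lambda_3$ in both directions: in the easy directions I expect actual $\le$, while in the hard directions only $\le^{\sal}_S$ with $S=\Fil^1_\kappa(D)$ will be available. I will not try to prove actual equality, since the statement allows for the weaker almost-equality outcome.

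First the two easy inequalities. The inequality $\lambda_2\le\lambda_3$ is immediate from the definitions: any $<_{D_1}$-increasing sequence $\langle\cF_\alpha:\alpha<\lambda\rangle$ of non-empty subsets of $\prod_{y\in Y}g(y)$ yields a family $\bold F=\{\cF_\alpha:\alpha<\lambda\}$ with the required pairwise $\ne_{D_1}$-separation (since $<_{D_1}$ refines $\ne_{D_1}$), so $\hrtg(\bold F)\ge\lambda^+$. For $\lambda_2\le\lambda_1$, using the same witness for $D_1$, the ordinal $\rho_\alpha:=\min\{\rk_{D_1}(f):f\in\cF_\alpha\}$ is well-defined, and since $f_1<_{D_1}f_2$ implies $\rk_{D_1}(f_1)<\rk_{D_1}(f_2)$, the sequence $\langle\rho_\alpha:\alpha<\lambda\rangle$ is strictly increasing below $\rk_{D_1}(g)$; hence $|\rk_{D_1}(g)|^+\ge\lambda^+$. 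No choice beyond what is built into the definition of $\rk_{D_1}$ is used here.

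Now the hard directions $\lambda_1,\lambda_3\le^{\sal}_S\lambda_2$. The strategy is to exploit Claim \ref{r24} on $<_{D_2}$-exact upper bounds together with AC$_{\cP(Y)}$ to convert either rank data or a separated family $\bold F$ into genuine $<_{D_2}$-increasing chains, paying the price of spreading the chain across extensions $D_2\in S$ of a fixed $D_1$. For $\lambda_1\to\lambda_2$: fix $D_1\in S$ and let $\gamma=\rk_{D_1}(g)$; the naive candidate $\cF_\beta=\{f\in\prod g(y):\rk_{D_1}(f)=\beta\}$ fails to be $<_{D_1}$-increasing, but whenever $\rk_{D_1}(f_1)<\rk_{D_1}(f_2)$ the set $A=\{y:f_1(y)<f_2(y)\}$ lies in $D_1^+$ (else $f_2\le_{D_1}f_1$ and so $\rk_{D_1}(f_2)\le\rk_{D_1}(f_1)$). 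I will partition the ordinals below $\gamma$ according to which extension $D_2\supseteq D_1$ in $S$ organises a block of rank-representatives into a $<_{D_2}$-chain, applying Claim \ref{r24} to produce $<_{D_2}$-pseudo-cofinality witnesses and AC$_{\cP(Y)}$ to select the extensions coherently. This covers $\gamma$ by $|S|$-many sets, each of order type $<\lambda_2$. The parallel argument for $\lambda_3\to\lambda_2$ starts from a family $\bold F$, chooses (via AC$_{\cP(Y)}$) a representative from each $\cF\in\bold F$, sorts by minimal rank, and distributes into $<_{D_2}$-chains for the various $D_2\in S$ using the same exact-upper-bound machinery.

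The main obstacle is the asymmetry between the rank function and the order $<_{D_1}$: $\rk_{D_1}$ is monotone on $<_{D_1}$ (which is what gives $\lambda_2\le\lambda_1$ cleanly) but does not reflect it, so one cannot manufacture a $<_{D_1}$-chain from a rank-increasing sequence without passing to filter extensions. This is precisely why equality of $\lambda_1,\lambda_2,\lambda_3$ is not expected and the $\sal$-weakening is used: the covering by $|S|$-many short chains is an artefact of having to re-extend the filter for each successive portion. Claim \ref{r24} and the AC$_{\cP(Y)}$ hypothesis are both essential — the former to produce genuine pseudo-cofinality data within each extension and the latter to select those extensions in a single coherent sequence.
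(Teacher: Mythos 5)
Your easy direction (the paper's Stage B, $\lambda_2\le\lambda_1,\lambda_3$) is correct and matches the paper. The gap is in the hard directions. You correctly identify the obstacle --- that $\rk_{D_1}(f_1)<\rk_{D_1}(f_2)$ only gives $\{y:f_1(y)<f_2(y)\}\in D_1^+$, a set depending on the \emph{pair} $(f_1,f_2)$, so no single extension organises the rank-increasing sequence into a chain --- but your proposed fix (``partition the ordinals below $\gamma$ according to which extension $D_2$ organises a block of rank-representatives into a $<_{D_2}$-chain,'' using Claim \ref{r24} and AC$_{\cP(Y)}$) is not a construction: you never say how an ordinal $\alpha<\gamma$ gets assigned to a $D_2$, and Claim \ref{r24} (exact upper bounds of already-increasing sequences) does not produce such an assignment. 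The missing idea is that each $f\in\Pi g$ carries a \emph{canonical} ideal $J[f,D_1]$ (not depending on any second function), and the facts from \cite[1.11]{Sh:938} that (i) every $\alpha<\rk_{D_1}(g)$ is realized as $\rk_{D_1}(f)$ for some $f$, so $\rk_{D_1}(g)=\bigcup\{X_{D_2}:D_2\in\Fil^1_\kappa(D_1)\}$ where $X_{D_2}$ collects the ranks of those $f$ with $J[f,D_1]=\dual(D_2)$, and (ii) any two such $f$ with the same associated ideal $\dual(D_2)$ and increasing ranks satisfy $f_1<f_2$ mod $D_2$. This is what makes each $X_{D_2}$ the index set of a genuine $<_{D_2}$-increasing (hence $\ne_{D_2}$-separated) chain of order type $<\lambda_2,\lambda_3$, and it is the entire content of Stage A; AC$_{\cP(Y)}$ and Claim \ref{r24} play no role there.

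The same omission affects your $\lambda_3\to\lambda_2$ argument. Sorting the family $\bold F$ ``by minimal rank'' is not enough: the paper's Stage C first fixes, for each class $\cF_\alpha$ and each $D_1$ in the set $\bold D_\alpha=\{\dual(J[f,D]):f\in\cF_\alpha\}$, the minimal rank $\zeta_{D_1,\alpha}$ among members of $\cF_\alpha$ with that associated ideal, and then uses that $\alpha\mapsto\zeta_{D_1,\alpha}$ is \emph{one-to-one} on $X_{D_1}=\{\alpha:D_1\in\bold D_\alpha\}$ (because $\ne_D$-separated functions with the same associated ideal have distinct ranks, again by \cite[1.11]{Sh:938}) to bound $\otp(X_{D_1})$ by $\rk_{D_1}(g)$ and by $\lambda_2$. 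Without the injectivity of rank on each ideal-class, the order-type count that yields $\le^{\sal}_S$ does not go through. So the high-level shape of your plan is right, but the proof as proposed cannot be completed without importing the $J[f,D]$ machinery.
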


\begin{remark}
\label{r30}
1) Recall that for $D$ a $\kappa$-complete filter on $Y$ we let
   Fil$^1_\kappa(D) = \{E:E$ is a $\kappa$-complete filter on $Y$
 extending $D\}$.

\noindent
2) The conclusion gives slightly less than equality of
$\lambda_1,\lambda_1,\lambda_3$.

\noindent
3) See \ref{k1}(6) below, by it $\lambda_2 =$ 
{\rm ps-Depth}$^+({}^\kappa \mu,<_D)$.

\noindent
4) We may replace $\kappa$-complete by $(\le Z)$-complete if
$\aleph_0 \le |Z|$.

\noindent
5) Compare with Definition \ref{k1}.

\noindent
6) Note that those cardinals are $\le \hrtg(\Pi^* g)$.
\end{remark}

\begin{PROOF}{\ref{r29}}
\bigskip

\noindent
\underline{Stage A}:  $\lambda_1 \le^{\text{sal}}_{\text{Fil}^1_\kappa(D)}
 \lambda_2,\lambda_3$.

Why?  Let $\chi < \lambda_1$, so by clause (a) 
there is $D_1 \in \text{ Fil}^1_\kappa(D)$
such that rk$_{D_1}(g) \ge \chi$.
Let $X_{D_2} = \{\alpha < \chi$: some $f \in \prod\limits_{y \in Y}
g(y)$ satisfies $D_2 = \text{ dual}(J[f,D_1])$ and $\alpha = 
\rk_{D_1}(f)\}$, for any
$D_2 \in \text{ Fil}^1_\kappa(D_1)$.  By \cite[1.11(5)]{Sh:938} 
 we have $\chi = \bigcup\{X_{D_2}:D_2 \in$ {\rm Fil}$^1_\kappa(D_1)\}$. 

Now
\mn
\begin{enumerate}
\item[$\odot$]  $D_2 \in$ {\rm Fil}$^1_\kappa(D_1) \Rightarrow 
|\otp(X_{D_2})| < \lambda_2,\lambda_3$; this is enough.
\end{enumerate}
\mn
Why?  Letting $\cF_{D_2,i}= \{f \in {}^Y \mu:
\text{ rk}_{D_1}(f) = i$ and $J[f,D_1] = \text{ dual}(D_2)\}$, by
\cite[1.11(2)]{Sh:938} we have: $i < j \wedge i 
\in X_{D_2} \wedge j \in X_{D_2} \wedge
f \in \cF_{D_2,i} \wedge g \in \cF_{D_2,j} \Rightarrow f < g$ mod
$D_2$ so otp$(X_{D_2}) < \lambda_2,\lambda_3$.
\medskip

\noindent
\underline{Stage B}:  $\lambda_2 \le^{\text{sal}}_{\text{Fil}^1_\kappa(D)}
\lambda_1,\lambda_3$.

Why?  Let $\chi < \lambda_2$ and let 
$D_1$ and $\langle \cF_\alpha:\alpha < \chi\rangle$
exemplify $\chi < \lambda_2$.  
Let $\gamma_\alpha = \text{ min}\{\text{rk}_{D_1}(f):f
\in \cF_\alpha\}$ so easily $\alpha < \beta < \chi \Rightarrow \gamma_\alpha <
\gamma_\beta$ hence rk$_D(g) \ge \chi$.  So $\chi < \lambda_1$ and
as for $\chi < \lambda_3$ this holds by Definition \ref{r26}(2) as $\alpha
< \beta \wedge f \in \cF_\alpha \wedge g \in \cF_\beta \Rightarrow f <
g$ mod $D_1 \Rightarrow f \ne g$ mod $D_1$ as $\chi^+ = \hrtg(\chi)
\le \lambda_3$.
\medskip

\noindent
\underline{Stage C}:  $\lambda_3 \le^{\text{sal}}_{\text{Fil}^1_\kappa(D)} 
\lambda_1,\lambda_2$.

Why?  Let $\chi < \lambda_3$.
Let $\langle \cF_\alpha:\alpha < \chi\rangle$ exemplify $\chi <
\lambda_3$.  For each $\alpha < \chi$ let $\bold D_\alpha = 
\{\text{dual}(J[f,D]):f
\in \cF_\alpha\}$ so a non-empty subset of $\text{Fil}^1_\kappa(Y)$.
Now for every  $D_1 \in \cup\{\bold D_\alpha:\alpha < \lambda\}$ let
$X_{D_1} = \{\alpha < \chi:D_1 \in \bold D_\alpha\}$ and for $\alpha
\in X_{D_1}$ let $\zeta_{D_1,\alpha} = 
\text{ min}\{\text{rk}_D(f):f \in \cF_\alpha$ and $D_1 
= \text{ dual}(J[f,D])\}$ and let $\cF_{D_1,\alpha} = \{f \in
\cF_\alpha:D_1 = J[f,D]$ and rk$_D(f) = \zeta_{D_1,\alpha}\}$ so a
non-empty subset of $\cF_\alpha$ and clearly $\langle
(\zeta_{D_1,\alpha},\cF_{D_1,\alpha}):\alpha \in X_{D_1}\rangle$
exists.

Now
\mn
\begin{enumerate}
\item[$(a)$]   $\alpha \mapsto \zeta_{D_1,\alpha}$ is a
one-to-one function with domain $X_{D_1}$
\sn
\item[$(b)$]  $\chi = \cup\{X_{D_1}:D_1 \in \text{
Fil}^1_\kappa(Y)\}$
\sn
\item[$(c)$]   if $\alpha < \beta$ are from $X_{D_1}$ and
$\zeta_{D_1,\alpha} < \zeta_{D_1,\beta},f \in \cF_{D_1,\alpha},
g \in \cF_{D_2,\beta}$ then $f < g$ mod $D_1$.
\end{enumerate}
\mn
[Why?  For clause (a), if $\alpha \ne \beta \in X_{\zeta_1},f \in
\cF_{D_1,\alpha},g \in \cF_{D_1,\beta}$ then $f \ne g$ mod $D$ hence by
\cite[1.11]{Sh:938} we have $\zeta_{D_1,\alpha} \ne
\zeta_{D_1,\beta}$.  For clause (b), it follows by the choices of
$\bold D_\alpha,X_{D_1}$.  Lastly, clause (c) follows by
\cite[1.11(2)]{Sh:938}.] 

Hence (by clause (c))
\mn
\begin{enumerate}
\item[$(d)$]  otp$(X_{D_1})$ is $< \lambda_2$ and is $\le \text{
rk}_{D_1}(g)$ for $D_1 \in \cup\{\bold D_\alpha:\alpha < \chi\}
\subseteq \text{ Fil}^1_\kappa(D)$.
\end{enumerate}
\mn
Together clauses (b),(d) show that $\chi < \lambda_1,\lambda_2$ so we
are done. 
\end{PROOF}

\begin{observation}
\label{r31}
Assume $D$ is a filter on $Y$ and $\bar\alpha \in {}^Y(\Ord \backslash
\{0\})$.

\noindent
1) $\ps-\bold T_D(\bar\alpha) = \sup\{\lambda^+$: there is a sequence
   $\langle \cF_\alpha:\alpha < \lambda\rangle$ such that $\cF_\alpha
   \subseteq \Pi \bar\alpha,\cF_\alpha \ne \emptyset$ and $\alpha \ne
   \beta \wedge f_1 \in \cF_\alpha \wedge f_2 \in \cF_\beta
   \Rightarrow f_1 \ne_D f_2\}$.
\end{observation}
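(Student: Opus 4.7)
The plan is to establish the two inequalities separately, the essential point being that the pairwise $\ne_D$ condition forces the sets in the family to be genuinely distinct (indeed pairwise disjoint), so that an ordinal indexing and an unordered family carry the same information up to Hartogs.

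First I would show that the supremum in Definition \ref{r26}(1) is at least the supremum in the Observation. Let $\langle \cF_\alpha : \alpha < \lambda\rangle$ be as on the right-hand side and set $\bold F = \{\cF_\alpha : \alpha < \lambda\}$. If $\alpha \ne \beta$, then $\cF_\alpha \ne \cF_\beta$: otherwise pick any $f \in \cF_\alpha = \cF_\beta$, and the condition would force $f \ne_D f$, absurd. So $\alpha \mapsto \cF_\alpha$ is an injection of $\lambda$ into $\bold F$, giving $\hrtg(\bold F) \ge \lambda^+$. Moreover $\bold F$ satisfies the hypotheses of Definition \ref{r26}(1) (the pairwise $\ne_D$ condition is inherited verbatim), so $\ps\text{-}\bold T_D(\bar\alpha) \ge \hrtg(\bold F) \ge \lambda^+$, and taking the supremum over such sequences yields the desired inequality.

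For the reverse direction, I would start with an arbitrary family $\bold F$ as in Definition \ref{r26}(1), and let $\chi < \hrtg(\bold F)$ be any ordinal. By the definition of $\hrtg$, there is an injection $h : \chi \to \bold F$; defining $\cF_\alpha := h(\alpha)$ produces a $\chi$-indexed sequence of distinct members of $\bold F$. The pairwise $\ne_D$ property of $\bold F$ transfers immediately: for $\alpha \ne \beta < \chi$ we have $\cF_\alpha = h(\alpha) \ne h(\beta) = \cF_\beta$, hence $f_1 \in \cF_\alpha, f_2 \in \cF_\beta \Rightarrow f_1 \ne_D f_2$. Thus $\chi^+$ lies below the right-hand supremum. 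Since $\chi < \hrtg(\bold F)$ was arbitrary, $\hrtg(\bold F)$ is bounded by the right-hand supremum, and taking the sup over all admissible $\bold F$ gives $\ps\text{-}\bold T_D(\bar\alpha)$ bounded above by the right-hand side.

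There is no real obstacle here; the only point requiring any care is verifying that distinct elements of the family (or of the sequence) are forced by the $\ne_D$ hypothesis, so that injections of ordinals into the family and indexed sequences really correspond.
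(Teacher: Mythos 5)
Your first direction is fine and is essentially what the paper dismisses with ``clearly''. The problem is in the reverse direction, at the step ``by the definition of $\hrtg$, there is an injection $h:\chi\to\bold F$''. Throughout this paper $\hrtg$ is used in the surjective sense: $\chi<\hrtg(\bold F)$ yields only a function from $\bold F$ \emph{onto} $\chi$ (this is how $\hrtg$ is invoked in the proofs of \ref{r15} and \ref{r16}, and it is what makes statements like ``$|A|\le_{\rm qu}|B|$ implies $\hrtg(A)\le\hrtg(B)$'' in \ref{r24} true). Converting such a surjection into an injection $\chi\to\bold F$ requires choosing one member of each fiber $h^{-1}\{\alpha\}$, i.e.\ an instance of AC over a family indexed by $\chi$ of subsets of $\bold F$ --- and this Observation carries no choice hypotheses at all, unlike most results in the paper. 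Since $\bold F$ is a family of sets of functions with no given well-ordering, there is no canonical way to make these selections, so your sequence $\langle\cF_\alpha:\alpha<\chi\rangle$ is not known to exist.

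The paper's proof repairs exactly this point: given the surjection $h:\bold F\to\lambda$, it sets $\cF'_\alpha=\bigcup\{\cF:\cF\in\bold F,\ h(\cF)=\alpha\}$, a canonical (choice-free) definition. One then checks that $\cF'_\alpha\ne\emptyset$ (each fiber is non-empty and each $\cF\in\bold F$ is non-empty), that $\cF'_\alpha\subseteq\Pi\bar\alpha$, and that the pairwise $\ne_D$ condition survives: if $\alpha\ne\beta$, $f_1\in\cF'_\alpha$, $f_2\in\cF'_\beta$, then $f_1,f_2$ come from members of $\bold F$ lying in disjoint fibers, hence from distinct members of $\bold F$, hence $f_1\ne_D f_2$. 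With that substitution your argument goes through; without it, the ``$\le$'' direction is not established.
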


\begin{proof}
1) Clearly the new definition gives a cardinal $\le \ps-\bold
   T_D(\bar\alpha)$.  For the other inequality assume $\lambda <
   \ps-\bold T_D(\bar\alpha)$ so there is $\bold F$ as there such that
   $\lambda < \hrtg(\bold F)$.  As $\lambda < \hrtg(\bold F)$ there is
   a function $h$ from $\bold F$ onto $\lambda$.  For $\alpha <
   \lambda$ define $\cF'_\alpha = \cup\{\cF:\cF \in \bold F$ and
   $h(\cF) = \alpha\}$.  So $\langle \cF'_\alpha:\alpha <
   \lambda\rangle$ exists and is as required.
\end{proof}
\medskip

\noindent
Concerning Theorem \ref{r29} we may wonder ``when does
$\lambda_1,\lambda_2$ being $S$-almost equal implies they are equal".
\begin{definition}
\label{r32}
1) We say ``the power of $\cU_1$ is $S$-almost smaller than the power of
   $\cU_2$", or write $|\cU_1| \le |\cU_2|$ mod $S$ or $|\cU_1|
   \le^{\alm}_S |\cU_2|$ when: we can find a sequence $\langle
 u_{1,s}:s \in S\rangle$ such that $\cU_1 = \cup\{u_{1,s}:s \in S\}$ and $s
   \in S \Rightarrow |\cU_{1,s}| \le |\cU_2|$.

\noindent
2) We say the power $|\cU_1|,|\cU_2|$ are $S$-almost equal (or
$|\cU_1| = |\cU_2|$ mod $S$ or $|\cU_1| =^{\alm}_S |\cU_2|$)
\when \, $|\cU_1| \le^{\alm}_S |\cU_2| \le^{\alm}_S |\cU_2|$.

\noindent
3) Let $|\cU_1| \le^{\alm}_{<S} |\cU_2|$ be defined naturally.

\noindent
4) In particular this applies to cardinals.

\noindent
5) Let $|\cU_1| <^{\alm}_S |\cU_2|$ means there is a sequence
 $\langle u_{1,s}:s \in S\rangle$ with union $\cU_1$ such that $s \in S
\Rightarrow |\cU_s| < |\cU_2|$.

\noindent
6) Let $|\cU_1| \le^{\sal}_S |\cU_2|$ means that if
$|\cU| < |\cU_1|$ then $|\cU| <^{\alm}_S |\cU_2|$.
\end{definition}

\begin{observation}
\label{r34}
1) If $|\cU_1| \le |\cU_2|$ and $S \ne \emptyset$ \then \, $|\cU_1|
\le^{\alm}_S |\cU_2|$.

\noindent
2) If $\lambda_1 \le \lambda_2$ and $S \ne \emptyset$ then $\lambda_1 
\le^{\sal}_S \lambda_2$.
\end{observation}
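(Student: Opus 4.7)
The plan is to prove both parts by a single trivial ``concentration'' trick: choose a distinguished $s_0\in S$ (which exists since $S\neq\emptyset$), place everything at index $s_0$, and use the empty set at all other indices. This works because the empty set trivially has cardinality below any positive cardinal, and the hypothesis $|\cU_1|\le|\cU_2|$ (resp.\ $\lambda_1\le\lambda_2$) handles the single nontrivial index.

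For part (1), I would fix $s_0\in S$ and set $u_{1,s_0}=\cU_1$ and $u_{1,s}=\emptyset$ for $s\in S\setminus\{s_0\}$. Then $\cU_1=\bigcup\{u_{1,s}:s\in S\}$, the sequence $\langle u_{1,s}:s\in S\rangle$ exists without any choice (it is defined by a simple case split on whether $s=s_0$), and $|u_{1,s}|\le|\cU_1|\le|\cU_2|$ for every $s\in S$. This gives $|\cU_1|\le^{\alm}_S|\cU_2|$ directly from Definition \ref{r32}(1).

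For part (2), I would unwind Definition \ref{r32}(6): I must show that whenever $|\cU|<\lambda_1$, one has $|\cU|<^{\alm}_S\lambda_2$ in the sense of Definition \ref{r32}(5). Given such a $\cU$, note $|\cU|<\lambda_1\le\lambda_2$, so in particular $\lambda_2>0$. Again fix $s_0\in S$ and set $u_{s_0}=\cU$, $u_s=\emptyset$ for $s\neq s_0$; then $\bigcup\{u_s:s\in S\}=\cU$ and $|u_s|<\lambda_2$ for every $s\in S$ (since $|u_{s_0}|=|\cU|<\lambda_1\le\lambda_2$ and $|u_s|=0<\lambda_2$ otherwise). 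If $\lambda_1=0$ the statement is vacuous, so no separate case is needed.

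There is essentially no obstacle here: the observation is recorded so that later applications of $\le^{\alm}_S$ and $\le^{\sal}_S$ can freely be replaced by the ordinary $\le$ relation when convenient, and both parts reduce to the remark that a singleton-supported decomposition always exists once $S\neq\emptyset$. No form of choice is invoked because the decomposition is defined by a single case distinction rather than by picking witnesses over $S$.
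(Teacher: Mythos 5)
Your proof is correct, and the paper itself gives no argument beyond the word ``Immediate,'' so your singleton-concentration decomposition is exactly the obvious verification the author had in mind. Nothing further is needed.
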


\begin{PROOF}{\ref{r34}}
Immediate.
\end{PROOF}

\begin{observation}
\label{r36}
1) The cardinals $\lambda_1,\lambda_2$ are equal \when \, $\lambda_1 
=^{\alm}_S \lambda_2$ and cf$(\lambda_1)$, cf$(\lambda_2) \ge
\hrtg(\cP(S))$.

\noindent
2) The cardinals $\lambda_1,\lambda_2$ are equal \when \, $\lambda_1
   =^{\alm}_S \lambda_2$ and $\lambda_1,\lambda_2$ are limit
cardinals $> \hrtg(\cP(S))$.

\noindent
3) If $\lambda_1 \le^{\alm}_S \lambda_2$ and $\partial =
\hrtg(\cP(S))$ \then \, $\lambda_1 \le^{\alm}_{< \partial} \lambda_2$.

\noindent
4) If $\lambda_1 \le^{\alm}_{< \theta} \lambda_2$ and
   cf$(\lambda_1) \ge \theta$ \then \, $\lambda_1 \le \lambda_2$.

\noindent
5) If $\lambda_1 \le^{\alm}_{< \theta} \lambda_2$ and
$\theta \le \lambda^+_2$ \then \, $\lambda_1 \le \lambda^+_2$.
\end{observation}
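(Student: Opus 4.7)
The plan is to treat the five parts bottom-up: first establish parts (4) and (5) by direct cardinal arithmetic, then handle (3) by a reindexing argument, and finally assemble (1) and (2) from these.

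For (4) and (5) I would unpack $\le^{\alm}_{<\theta}$ to a sequence $\langle u_t : t \in T\rangle$ with $|T| < \theta$, $\lambda_1 = \bigcup_{t \in T} u_t$, and $|u_t| \le \lambda_2$. Since $T$ and $\lambda_2$ are well-orderable, cardinal arithmetic yields $\lambda_1 \le |T| \cdot \lambda_2 = \max(|T|, \lambda_2)$ (assuming $\lambda_2$ infinite, which is harmless). For (4), suppose $\lambda_2 < \lambda_1$ toward contradiction; then $|T| < \theta \le \cf(\lambda_1) \le \lambda_1$, and whichever of $|T|, \lambda_2$ is larger is still strictly below $\lambda_1$, contradicting $\lambda_1 \le \max(|T|, \lambda_2)$. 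For (5), $|T| < \theta \le \lambda_2^+$ forces $|T| \le \lambda_2$, so $\lambda_1 \le \lambda_2 \le \lambda_2^+$.

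For (3), given $\lambda_1 = \bigcup_{s \in S} u_s$ with $|u_s| \le \lambda_2$, I would reindex by the set of distinct pieces $\bold U = \{u_s : s \in S\} \subseteq \cP(\lambda_1)$, with the new covering $\lambda_1 = \bigcup \bold U$. The map $u \mapsto \{s \in S : u_s = u\}$ is an injection $\bold U \hookrightarrow \cP(S)$, so any ordinal that embeds into $\bold U$ is less than $\partial = \hrtg(\cP(S))$. Under the natural reading of Definition \ref{r32}(3) — where the subscript ``$<\partial$'' means the index set has Hartogs number $\le \partial$ — this is exactly $\lambda_1 \le^{\alm}_{<\partial} \lambda_2$. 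This step is where the main subtlety lies: $\bold U$ need not be well-orderable in the ambient ZF-model, and the argument depends on interpreting ``$<\partial$'' as a bound on Hartogs number rather than requiring a well-ordering of $\bold U$.

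For (1), apply (3) to both sides of $\lambda_1 =^{\alm}_S \lambda_2$ to obtain $\lambda_1 \le^{\alm}_{<\partial} \lambda_2$ and $\lambda_2 \le^{\alm}_{<\partial} \lambda_1$; since $\cf(\lambda_\ell) \ge \partial$ for $\ell = 1, 2$, part (4) with $\theta = \partial$ gives $\lambda_1 \le \lambda_2 \le \lambda_1$, hence equality. For (2), use (3) again to reduce to $\le^{\alm}_{<\partial}$ in both directions; if say $\lambda_1 > \lambda_2$, then $\lambda_1 = \bigcup_{\xi<\eta} v_\xi$ with $\eta < \partial < \lambda_1$ and $|v_\xi| \le \lambda_2 < \lambda_1$ gives $\lambda_1 \le \max(\eta, \lambda_2)$, but both $\eta$ and $\lambda_2$ are cardinals below the limit cardinal $\lambda_1$, so $\max(\eta, \lambda_2) < \lambda_1$ (its successor is still below $\lambda_1$), a contradiction; symmetry finishes the argument. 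The main obstacle throughout is (3); once the index-set interpretation is pinned down there, everything else is routine cardinal bookkeeping.
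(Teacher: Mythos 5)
Your parts (1), (2), (4), (5) match the paper's argument in substance, but part (3) --- which you correctly identify as the crux --- has a genuine gap, and it is not merely a matter of how to read Definition \ref{r32}(3). Reindexing the covering by the set $\bold U=\{u_s:s\in S\}$ of distinct pieces gives an index set that injects into $\cP(S)$ but need not be well-orderable, so the most you can conclude is $\hrtg(\bold U)\le\partial$. That weaker conclusion cannot be fed into your own proofs of (4) and (5): there you explicitly use that the index set $T$ is well-orderable of cardinality $<\theta$, both to select for each $\beta<\lambda_1$ a piece containing it (take the least index) and to compute $|T\times\lambda_2|=\max(|T|,\lambda_2)$. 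With a non-well-orderable index there is no canonical injection of $\bigcup\{u:u\in\bold U\}$ into $\bold U\times\lambda_2^+$, and the cardinal arithmetic collapses; so under your ``Hartogs number'' reading of $\le^{\alm}_{<\partial}$, part (4) is no longer available and the derivations of (1) and (2) do not go through.

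The paper's proof of (3) avoids this by transferring the index to a set of \emph{ordinals}: declare $\alpha\sim\beta$ iff $(\forall s\in S)(\alpha\in u_s\equiv\beta\in u_s)$, let $w\subseteq\lambda_1$ be the set of $\sim$-least representatives, and for $\alpha\in w$ let $v_\alpha$ be its class. The map $\alpha\mapsto\{s:\alpha\in u_s\}$ is injective on $w$ into $\cP(S)$, and $w$ is well-ordered (being a set of ordinals), so $|w|<\hrtg(\cP(S))=\partial$; moreover each $v_\alpha$ is contained in some $u_s$ (the $u_s$ cover $\lambda_1$), so $|v_\alpha|\le\lambda_2$. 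This produces a covering indexed by a genuine well-ordered set of size $<\partial$, which is exactly what (4) and (5) consume. Your argument is repaired by replacing $\bold U$ with this $w$; as written, the chain (3) $\Rightarrow$ (4) $\Rightarrow$ (1) does not close.
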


\begin{PROOF}{\ref{r36}}
1) Otherwise, let $\partial = \hrtg(\cP(S))$, 
 \wilog \, $\lambda_2 < \lambda_1$ and by part (3) we have
$\lambda_1 \le^{\alm}_{< \partial} \lambda_2$ and by part (4) we have
   $\lambda_1 \le \lambda_2$ contradiction.

\noindent
2) Otherwise letting $\partial = \hrtg(\cP(S))$ 
 \wilog \, $\lambda_2 < \lambda_1$ and by part (3) we have
$\lambda_1 \le^{\alm}_{< \partial} \lambda_2$ but $\theta <
   \lambda_2$ is assume and $\lambda^+_2 < \lambda_1$ as $\lambda_2$
   is a limit cardinal so together we get contradiction to part (5).

\noindent
3) If $\langle u_s:S \in S\rangle$ witness $\lambda_1
   \le^{\alm}_S \lambda_2$, let $w = \{\alpha < \lambda_1$: for
   no $\beta < \alpha$ do we have $(\forall s \in S)(\alpha \in u_s
   \equiv \beta \in u_s)\}$ so clearly $|w| < \hrtg(\cP(S)) = \theta$ and for 
$\alpha \in w$ let 
$v_\alpha = \{\beta < \lambda_1:(\forall s \in S)(\alpha \in u_s
   \equiv \beta \in u_s)\}$ so $\langle v_\alpha:\alpha \in w \rangle$
witness $\lambda_1 \le^{\text{alm}}_w \lambda_2$ hence $\lambda_1
\le^{\alm}_{< \theta} \lambda_2$.

\noindent
4),5)  Let $\sigma <  \theta$ be such that $\lambda_1
   \le^{\alm}_\sigma \lambda_2$ and let $\langle
   u_\varepsilon:\varepsilon < \sigma\rangle$ witness $\lambda_1
   \le^{\alm}_\sigma \lambda_2$, that is $|u_\varepsilon| \le
   \lambda_2$ for $\varepsilon < \sigma$ and
   $\cup\{u_\varepsilon:\varepsilon < \sigma\} = \lambda_1$.  

For part (4), if $\lambda_2 < \lambda_1$, then we have $\varepsilon < \sigma
   \Rightarrow |u_\varepsilon| < \lambda_1$, but cf$(\lambda_1) >
   \sigma$ hence $|\{\cup\{u_\varepsilon:\varepsilon < \sigma\} <
   \lambda_1|$, contradiction.  

For part (5) for $\varepsilon < \sigma$, let $u'_\varepsilon =
   u_\varepsilon \backslash \cup\{u_\zeta:\zeta < \varepsilon\}$ and
   so otp$(u'_\varepsilon) \le \otp(u_\varepsilon) <
   |u_\varepsilon|^+ \le \lambda^+_2$ so easily $|\lambda_1| =
|\cup\{u_\varepsilon:\varepsilon < \sigma\}| =
   |\cup\{u'_\varepsilon:\varepsilon < \sigma\}| \le \sigma \cdot
\lambda^+_2 \le \lambda^+_2 \cdot \lambda^+_2 = \lambda^+_2$. 
\end{PROOF}

Similarly
\begin{observation}
\label{r37}
1) If $\lambda_1 <^{\text{alm}}_S \lambda_2$ and $\partial =
\hrtg(\cP(S))$ \then \, $\lambda_1 <^{\alm}_{< \partial}
\lambda_2$.

\noindent
2) If $\lambda_1 <^{\alm}_{< \theta} \lambda_2$ and
cf$(\lambda_1) \ge \theta$ \then \, $\lambda_1 < \lambda_2$.

\noindent
3) If $\lambda_1 <^{\alm}_{< \theta} \lambda_2$ and $\theta \le
\lambda^+_2$ \then \, $\lambda_1 \le \lambda_2$.

\noindent
4) If $\lambda_1 \le^{\sal}_S \lambda_2$ and $\partial =
\hrtg(\cP(S))$ \then \, $\lambda_1 <^{\sal}_{< \partial}
\lambda_2$.

\noindent
5) If $\lambda_1 \le^{\sal}_{< \theta} \lambda_2$ and
$\partial \le \lambda^+_2,\theta < \lambda_2$ and cf$(\lambda_2)
\ge \theta$ \then \, $\lambda_1 \le \lambda_2$.

\noindent
6) If $\lambda_1 \le^{\sal}_{< \theta} \lambda_2$ and
$\theta \le \lambda^+_2$ \then \, $\lambda_1 \le \lambda^+_2$.
\end{observation}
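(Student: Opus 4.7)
The strategy is to adapt the six analogous arguments from Observation \ref{r36} almost verbatim, replacing each weak inequality $|u_s| \le \lambda_2$ by the strict version $|u_s| < \lambda_2$ and tracking how the resulting bounds propagate. I will handle parts (1)--(3) first (which are the ``strict'' refinements of \ref{r36}(3)--(5)), then derive the $\sal$-versions (4)--(6) by applying these to each $\mu < \lambda_1$.

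For part (1), let $\langle u_s:s\in S\rangle$ witness $\lambda_1<^{\alm}_S\lambda_2$, so $|u_s|<\lambda_2$ for every $s$ and $\bigcup_s u_s=\lambda_1$. Run exactly the construction in \ref{r36}(3): let $w=\{\alpha<\lambda_1:\text{no }\beta<\alpha\text{ satisfies }(\forall s)(\alpha\in u_s\leftrightarrow\beta\in u_s)\}$ and $v_\alpha=\{\beta<\lambda_1:(\forall s)(\alpha\in u_s\leftrightarrow\beta\in u_s)\}$ for $\alpha\in w$. Then $|w|<\hrtg(\cP(S))=\partial$ and each $v_\alpha$ is contained in any single $u_s$ with $\alpha\in u_s$ (there is such an $s$ since $\bigcup u_s=\lambda_1$), so $|v_\alpha|\le |u_s|<\lambda_2$. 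Hence $\langle v_\alpha:\alpha\in w\rangle$ witnesses $\lambda_1<^{\alm}_{<\partial}\lambda_2$.

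For part (2), fix $\sigma<\theta$ and a witness $\langle u_\varepsilon:\varepsilon<\sigma\rangle$ with $|u_\varepsilon|<\lambda_2$ and $\bigcup u_\varepsilon=\lambda_1$. Suppose toward contradiction that $\lambda_2\le\lambda_1$. Then each $|u_\varepsilon|<\lambda_1$, so the ordinal $\mu:=\sup_\varepsilon|u_\varepsilon|$ is a supremum of $\sigma<\cf(\lambda_1)$ ordinals each less than $\lambda_1$, hence $\mu<\lambda_1$. Consequently $|\lambda_1|=|\bigcup u_\varepsilon|\le\sigma\cdot\mu<\lambda_1$, a contradiction, giving $\lambda_1<\lambda_2$. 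For part (3), imitate \ref{r36}(5): set $u'_\varepsilon=u_\varepsilon\setminus\bigcup_{\zeta<\varepsilon}u_\zeta$, so $\otp(u'_\varepsilon)<|u_\varepsilon|^+\le\lambda_2$ (using $|u_\varepsilon|<\lambda_2$). Since $\sigma<\theta\le\lambda_2^+$ gives $\sigma\le\lambda_2$, the disjoint union computation yields $|\lambda_1|\le\sigma\cdot\lambda_2=\lambda_2$.

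For parts (4)--(6), unwind the definition of $\le^{\sal}$: $\lambda_1\le^{\sal}_S\lambda_2$ says that every $\mu<\lambda_1$ satisfies $\mu<^{\alm}_S\lambda_2$. Part (4) is then immediate by applying part (1) to each such $\mu$. For part (5), suppose $\lambda_1>\lambda_2$; then $\lambda_2<\lambda_1$, so by hypothesis $\lambda_2<^{\alm}_{<\theta}\lambda_2$, and the argument of part (2) (using $\cf(\lambda_2)\ge\theta>\sigma$) gives $|\lambda_2|<\lambda_2$, contradiction. Part (6) is analogous: if $\lambda_1>\lambda_2^+$ then $\lambda_2^+<^{\alm}_{<\theta}\lambda_2$, and part (3) yields $\lambda_2^+\le\lambda_2$, contradiction. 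The only point requiring a sliver of care is the sum-of-cardinals estimate in part (2), where without full $\AC$ one must read ``$\sigma\cdot\mu<\lambda_1$'' as ``the union of $\sigma$ many ordinal subsets of $\lambda_1$, each of cardinality $\le\mu<\lambda_1$, has cardinality $<\lambda_1$''; this is just the standard fact that $\cf(\lambda_1)>\sigma$ precludes cofinal union, applied to the ordinal suprema rather than to cardinalities, and is exactly the move already used in \ref{r36}(4).
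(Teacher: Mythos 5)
Your proposal is correct and is essentially the paper's intended argument: the paper offers no separate proof of Observation \ref{r37}, introducing it only with ``Similarly'' (i.e.\ similarly to Observation \ref{r36}), and your write-up is exactly that adaptation, with the strict bounds $|u_s|<\lambda_2$ propagated through the pattern-collapse argument for (1), the cofinality/union estimates for (2)--(3), and the reduction of the $\le^{\sal}$ clauses (4)--(6) to the $<^{\alm}$ clauses applied to each $\mu<\lambda_1$. The one step you rightly flag --- replacing the cardinal product $\sigma\cdot\mu$ by a canonical surjection from $\sigma\times\sup_\varepsilon\otp(u_\varepsilon)$ onto the union, which needs no choice since the $u_\varepsilon$ are given as a sequence of sets of ordinals --- is handled at the same level of care as in the paper's own proof of \ref{r36}(4),(5).
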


\begin{discussion}
\label{r38}  
1) We like to measure $({}^Y \mu)/D$ in some ways and show
their equivalence, as was done in ZFC.  Natural candidates are:
\medskip

\noindent
\begin{enumerate}
\item[$(A)$]   pp$_D(\mu)$: say of length of increasing sequence
$\bar P$ (not $\bar p$!, i.e. sets) ordered by $<_D$
\smallskip

\noindent
\item[$(B)$]  {\rm pp}$^+_Y(\mu) =
\sup\{\text{pp}^+_D(\mu):D$ an $\aleph_1$-complete filter on $Y\}$
\smallskip

\noindent
\item[$(C)$]  As in  \ref{r26}.
\end{enumerate}
\mn
2) We may measure ${}^Y \mu$ by considering all $\partial$-complete
filters.

\noindent
3) We may be more lenient in defining ``same cardinality".  E.g.
\mn
\begin{enumerate}
\item[$(A)$]  we define when sets have similar powers say by divisions
to ${\cP}({\cP}(Y))$ sets we measure $({}^Y \mu)/\approx_{\cP(\cP(Y))}$
where $\approx_B$ is the following equivalence relation on sets:

$X \approx_B Y$ when we can find sequences $\langle X_b:b \in B
\rangle,\langle Y_b:b \in B\rangle$ 

such that:
\begin{enumerate}
\item[$(a)$]  $X = \cup\{X_b:b \in B\}$
\sn
\item[$(b)$]  $Y = \cup\{Y_b:b \in B\}$
\sn
\item[$(c)$]  $|X_b| = |Y_b|$
\end{enumerate}
\item[$(B)$]   we may demand more: the $\langle X_b:b \in B\rangle$
are pairwise disjoint and the $\langle Y_b:b \in B\rangle$ are
pairwise disjoint
\sn
\item[$(C)$]  we may demand less: e.g.
\mn
\begin{enumerate}
\item[$(c)'$]  $|X_b| \le_* |Y_b| \le_* |X_b|$

and/or
\item[$(c)_*$]  $(\forall b \in B)(\exists c \in B)(|X_b| \le |Y_c|)$
and

$(\forall b \in B)(\exists c \in B)(|Y_b| \le |X_c|)$.
\end{enumerate}
\end{enumerate}
\mn
Note that some of the main results of \cite{Sh:835} can be expressed
this way.
\mn
\begin{enumerate}
\item[$(D)$]  rk-sup$_{Y,\partial}(\mu) =$ {\rm rk-sup}
$\{\rk_D(\mu):D$ is $\partial$-complete filters on $Y\}$
\sn
\item[$(E)$]  for each non-empty $X \subseteq {}^Y \mu$ let

\[
\text{sp}^1_\alpha(X) = \{(D,J):D \text{ an } \aleph_1\text{-complete
filter on } Y,J = J[f,D],\alpha = \text{ rk}_D(f) \text{ and } f \in X\}
\]

\[
\text{sp}_1(X) = \cup\{\text{sp}^1_\alpha(X):\alpha\}
\] 
\smallskip

\noindent
\item[$(F)$]   \und{question}:  If $\{\text{sp}(X_s):s \in S\}$ is constant,
can we bound $J$?
\smallskip

\noindent
\item[$(G)$]  $X,Y$ are called connected \underline{when} 
sp$(X_1),\text{sp}(X_2))$ are non-disjoint or equal.
\end{enumerate}
\mn
4) We hope to prove, at least sometimes $\gamma := 
\Upsilon({}^Y \mu) \le \text{\rm pp}_\kappa(\mu)$
that is we like to immitate \cite{Sh:835} without the choice axioms
on ${}^\omega \mu$.
\mn
So there is $\bar f = \langle f_\alpha:\alpha < \delta\rangle$
witnessing $\gamma < \Upsilon({}^Y \mu)$.  We define $u = u_{\bar f}
= \{\alpha$: there is no $\bar \beta \in {}^\omega \alpha$ such that
$(\forall i t \in Y)(f_\alpha(t) \in \{f_{\beta_n}(t):n < \omega\})$.
You may say that $u_{\bar f}$ is the set of $\alpha < \delta$ such
that $f_\alpha$ is ``really novel".

By DC this is O.K., i.e.
\mn
\begin{enumerate}
\item[$\boxplus_1$]  for every $\alpha < \delta$ there is $\bar \beta
\in {}^\omega(u_{\bar f} \cap \alpha)$ such that $(\forall t \in
Y)(f_\alpha(t)) = \{f_{\beta_n}(t):n < \omega\}$.
\end{enumerate}
\mn
Next for $\alpha \in u_{\bar f}$ we can define $D_{\bar f,\alpha}$,
the $\aleph_1$-complete filter on $Y$ generated by $\Big\{ \{t \in
Y:f_\beta(t) = f_\alpha(t)\}:\beta < \alpha\Big\}$.  So clearly $\alpha
\ne \beta \in u_{\bar f} \wedge D_{\bar f,\alpha} = D_{\bar f,\beta}
\Rightarrow f_\alpha \ne_D f_\beta$.  Now for each pair $\bar D =
(D_1,D_2) \in \text{ Fil}^4_Y$ (i.e. for the $\aleph_1$-complete case)
let $\Lambda_{\bar f,\bar D} = \{\alpha \in u_{\bar f}:D_{\bar
f,\alpha} = D_1$ and $J[f_\alpha,D_1]\} = \text{ dual}(D_2)$.  So
$\gamma$ is the union of $\le \cP(\cP(Y))$-sets (as $|Y| = |Y| \times
|Y|$, well ordered.

So
\medskip

\noindent
\begin{enumerate}
\item[$(*)_1$]  $\gamma \le \hrtg(({}^Y \omega \times {}^\omega(\mu))$
\smallskip

\noindent
\item[$(*)_2$]  $u$ is the union of ${\cP}({\cP}(\kappa))$-sets each 
of cardinality $< \text{ pp}^+_{Y,\aleph_1}(\mu)$
\smallskip

\noindent
\item[$(I)$]   what about $\hrtg({}^\kappa \mu) < 
\text{ ps-pp}_{Y,\aleph_1}(\mu)$?
\end{enumerate}
We are given $\langle {\cF}_\alpha:\alpha < \kappa\rangle \ne
F_\alpha \ne \emptyset,{\cF}_\alpha \subseteq \mu,\alpha \ne \beta
\Rightarrow {\cF}_\alpha \cap {\cF}_\beta = \emptyset$.

Easier: looking modulo a fix filter $D$.
\medskip

\noindent
\begin{enumerate}
\item[$(*)_2$]   for $D \in \text{ Fil}_{Y,\aleph_1}$, let
${\cF}_{\alpha,D} = \{f \in {\cF}_\alpha:\neg(\exists g \in
{\cF}_\alpha)(g <_D f)\}$.
\end{enumerate}
Maybe we have somewhere a bound on the size of ${\cF}_{\alpha,D}$.
\end{discussion}
\bigskip

\subsection {Depth of Reduced Power of Ordinals} \
\bigskip

Our intention has been to generalize a relative of \cite{Sh:460}, but
actually we are closed to
\cite[\S3]{Sh:513} using IND but unlike \cite{Sh:938} rather than with rank
we deal with depth.

\begin{definition}
\label{k1}
1) Let suc$_X(\alpha)$ be the first ordinal $\beta$ such that we
   cannot find a sequence $\langle \cU_x:x \in X\rangle$ of subsets of
   $\beta$, each of order type $< \alpha$ such that $\beta =
   \cup\{\cU_x:x \in X\}$.

\noindent
2) We define suc$^{[\varepsilon]}_X(\alpha)$ by induction on
   $\varepsilon$ naturally: if $\varepsilon = 0$ it is $\alpha$, if
   $\varepsilon = \zeta +1$ it is
   suc$_X(\text{suc}^{[\zeta]}_X(\alpha))$ and if $\varepsilon$ is a
   limit ordinal then it is $\cup\{\text{suc}^{[\zeta]}_X(\alpha):\zeta
   < \varepsilon\}$.

\noindent
3) For a quasi-order $P$ let the pseudo ordinal depth of $P$, denoted by
ps-o-Depth$(P)$ be sup$\{\gamma$: there is 
a $<_P$-increasing sequence $\langle X_\alpha:\alpha < \gamma\rangle$
of non-empty subsets of $P\}$.

\noindent
4) o-Depth$(P)$ is defined 
similarly demanding $|X_\alpha|=1$ for $\alpha < \gamma$.

\noindent
5) Omitting the ``ordinal" means $\gamma$ is replaced by $|\gamma|$;
   similarly in the other variants.

\noindent
6) Let ps-o-Depth$^+(P) = \sup\{\gamma +1$: there is an increasing
   sequence $\langle X_\alpha:\alpha < \gamma\rangle$ of non-empty
   subsets of $P\}$. 
Similarly for the other variants, e.g. without o we use $|\gamma|^+$
instead of $\gamma +1$ in the supremum. 

\noindent
7) For $D$ a filter on $Y$ and $\bar\alpha \in {}^Y$({\rm Ord}
$\backslash \{0\})$ let ps-o-Depth$^+_D(\bar\alpha) =$
   ps-o-Depth$^+(\Pi \bar\alpha,<_D)$.  Similarly for 
the other variants and we may allow $\alpha_t = 0$ as in \ref{r26}(3).

\noindent
8) Let ps-o-depth$^+_D(\bar\alpha)$ be the cardinality of
   ps-o-Depth$^+_D(\bar\alpha)$.  
\end{definition}

\begin{remark}
Note that \ref{r24} can be phrased using this definition.
\end{remark}

\begin{definition}
\label{k4}
0) We say $\bold x$ is a filter $\omega$-sequence \when \,
$\bold x = \langle (Y_n,D_n):n < \omega\rangle = \langle
Y_{\bold x,n},D_{\bold x,n}:n < \omega\rangle$ is such that 
$D_n$ is a filter on $Y_n$ for each $n < \omega$; 
we may omit $Y_n$ as it is $\cup\{Y:Y
\in D\}$ and may write $D$ if $\bigwedge\limits_{n} D_n=D$.

\noindent
1) Let IND$(\bold x),\bold x$ has the independence property,  
means that for every sequence $\bar F =
\langle F_{m,n}:m < n < \omega\rangle$ from alg$(\bold x)$, see below,
there is $\bar t \in \prod\limits_{n < \omega} Y_n$ such that $m <
n < \omega \Rightarrow t_m \notin F_{m,n}(\bar t \rest (m,n])$.  Let
NIND$(\bold x)$ be the negation.

\noindent
2) Let alg$(\bold x)$ be the set of sequence $\langle F_{n,m}:n < m <
 \omega\rangle$ such that $F_{m,n}:\prod\limits_{\ell=m+1}^{n} Y_\ell
   \rightarrow \text{ dual}(D_n)$.

\noindent
3) We say $\bold x$ is $\kappa$-complete \when \, each $D_{\bold x,n}$
   is a $\kappa$-complete filter.
\end{definition}

\begin{theorem}
\label{k6}
Assume {\rm IND}$(\bold x)$ where $\bold x = \langle (Y_n,D_n):
n < \omega\rangle$ is as in Definition \ref{k4},
$D_n$ is $\kappa_n$-complete, $\kappa_n \ge \aleph_1$.

\noindent
1) [{\rm DC + AC}$_{Y_n}$ for $n < \omega$]  
For every ordinal $\zeta$, for infinitely many $n$'s 
{\rm ps-o-Depth}$({}^{(Y_n)}\zeta,<_{D_n}) \le \zeta$.

\noindent
2) [{\rm DC}]  For every ordinal $\zeta$ for infinitely many $n$, 
{\rm o-Depth}$({}^{(Y_n)}\zeta,<_{D_n}) \le \zeta$,
equivalently there is no $<_{D_n}$-increasing sequence of length
$\zeta +1$.
\end{theorem}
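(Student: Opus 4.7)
The plan is to prove both parts by contradiction. Suppose for some ordinal $\zeta$ and some $N < \omega$ that for every $n \geq N$ the relevant depth of $({}^{(Y_n)}\zeta, <_{D_n})$ exceeds $\zeta$; replacing $\bold x$ by its tail from $N$, we may assume $N = 0$. For part (2), DC supplies for each $n < \omega$ a $<_{D_n}$-increasing sequence $\bar f^n = \langle f^n_\alpha : \alpha \leq \zeta\rangle$ in ${}^{(Y_n)}\zeta$. The idea is to manufacture an algebra $\bar F \in \operatorname{alg}(\bold x)$ whose refutation by $\operatorname{IND}(\bold x)$ produces a strictly decreasing $\omega$-chain of ordinals in $\zeta$, the desired contradiction.

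The natural candidate is
\[
F_{m,n}(t_{m+1}, \ldots, t_n) = \bigl\{s \in Y_m : f^m_\zeta(s) \leq f^n_\zeta(t_n)\bigr\},
\]
which depends only on $t_n$. If $\operatorname{IND}(\bold x)$ delivers $\bar t \in \prod_n Y_n$ with $t_m \notin F_{m,n}(\bar t \rest (m, n])$ for all $m < n$, then $f^m_\zeta(t_m) > f^n_\zeta(t_n)$ for all $m < n$, so $\langle f^n_\zeta(t_n) : n < \omega\rangle$ is a strictly descending $\omega$-sequence of ordinals in $\zeta$, contradiction. To legitimize this, one must check $F_{m,n}(\bar t) \in \operatorname{dual}(D_m)$, i.e.\ $\{s \in Y_m : f^m_\zeta(s) > \alpha\} \in D_m$ for every $\alpha < \zeta$ (taking $\alpha = f^n_\zeta(t_n)$). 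This is not free from the bare hypothesis, so the argument proceeds by a preliminary normalization: replace $f^n_\alpha$ by $\max(f^n_\alpha, \alpha)$ for $\alpha < \zeta$ (the $<_{D_n}$-increase between levels $\alpha < \beta < \zeta$ is preserved by case analysis), then use the $\aleph_1$-completeness of $D_n$ to combine $<_{D_n}$-strict-increase with the normalization at the top level to force $f^n_\zeta(s) > \alpha$ on a $D_n$-large set for every $\alpha < \zeta$.

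For part (1), the same strategy applies after using $\operatorname{AC}_{Y_n}$ together with DC to extract representative functions from the nonempty sets $X^n_\alpha \subseteq {}^{(Y_n)}\zeta$ witnessing $\text{ps-o-Depth} > \zeta$: $\operatorname{AC}_{Y_n}$ selects a value $f^n_\alpha(t) \in \{g(t) : g \in X^n_\alpha\}$ pointwise for each $t \in Y_n$, and DC strings the constructions of $\bar f^n$ together across $n < \omega$. The $<_{D_n}$-increase of $\langle X^n_\alpha : \alpha \leq \zeta\rangle$ as sets transfers to any such representative sequence, and the part (2) argument applies.

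The main obstacle is the normalization step and, relatedly, the passage from countably many $D_n$-large sets to a single one at the top level $\alpha = \zeta$: this is routine when $\operatorname{cf}(\zeta) = \aleph_0$ by $\aleph_1$-completeness but requires more care for higher cofinalities. The fallback, if the direct normalization fails, is a more intricate $F_{m,n}$ built from recursively defined ordinals $\alpha^{\bar t}_k$ (starting with $\alpha^{\bar t}_n = \zeta$ and descending via $\alpha^{\bar t}_{k-1} = f^k_{\alpha^{\bar t}_k}(t_k)$) so that $F_{m,n}(\bar t)$ compares inside $f^m_{\alpha^{\bar t}_m}$ rather than against $f^m_\zeta$ directly; this stays automatically in $\operatorname{dual}(D_m)$, and the descent to contradiction is then extracted by tracking how these $\alpha^{\bar t}_k$ propagate as $n$ varies.
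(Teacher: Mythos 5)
Your main construction does not go through, and the failure is structural rather than a matter of ``more care''. For $F_{m,n}(\bar t)=\{s\in Y_m: f^m_\zeta(s)\le f^n_\zeta(t_n)\}$ to lie in dual$(D_m)$ you need $\{s\in Y_m: f^m_\zeta(s)>\alpha\}\in D_m$ for every $\alpha<\zeta$ that can occur as a value $f^n_\zeta(t_n)$, i.e.\ you need $f^m_\zeta$ to be $D_m$-cofinal in $\zeta$. No function from $Y_m$ into $\zeta$ has this property when $\zeta$ is a successor (take $\alpha=\zeta-1$), and none has it when $\cf(\zeta)=\aleph_0$ either: $D_m$ is $\aleph_1$-complete, so intersecting the large sets along a countable cofinal sequence would force $f^m_\zeta\ge\zeta$ on a $D_m$-large (hence non-empty) set. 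The flagship instance of the theorem (Remark \ref{k7}(2), $\zeta=\aleph_\omega$) is exactly of the latter kind, so the direct comparison against top-level functions provably cannot be legitimized. Your normalization does not repair this: replacing $f^n_\alpha$ by $\max(f^n_\alpha,\alpha)$ for $\alpha<\zeta$ does keep the lower levels $<_{D_n}$-increasing among themselves, but the assertion that the normalized $f^n_\alpha$ still lies $<_{D_n}$-below $f^n_\zeta$ is literally the assertion $\{s: f^n_\zeta(s)>\alpha\}\in D_n$ that you are trying to derive --- the step is circular. Separately, your reduction of part (1) to part (2) is invalid: a pointwise selection $f^n_\alpha(t)\in\{g(t):g\in X^n_\alpha\}$ is in general not a member of $X^n_\alpha$, and the $<_{D_n}$-increase of the sets does not transfer to such hybrid functions (two members of $X^n_\alpha$ may interleave, so their pointwise mix can dominate everything in $X^n_\beta$ on a positive set); choosing genuine members $f^n_\alpha\in X^n_\alpha$ for all $\alpha\le\zeta$ would require choice over $\zeta+1$ many non-empty sets, which ${\rm DC}+{\rm AC}_{Y_n}$ does not provide. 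This is precisely why the ``pseudo'' notion exists.

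Your fallback --- the recursively defined ordinals $\alpha^{\bar t}_k$ descending through the levels --- is the correct idea and is in fact what the paper does, but it is the whole proof rather than a patch, and it is left unexecuted. The paper attaches to each $n$ a function $g_n$ on the tree of partial tuples $\bigcup_{k\le n}\prod_{\ell=k}^{n-1}Y_\ell$ with $g_n(\langle\rangle)=\zeta$, requiring each one-step slice $y\mapsto g_n(\langle y\rangle\,{}^\frown\eta)$ to be a member of the family $\cF_{k,g_n(\eta)}$; this is where ${\rm AC}_{Y_n}$ is actually used and is what makes part (1) work, since one only ever needs coordinated choices over a single $Y_k$ at a time rather than over $\zeta+1$ levels. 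One then proves by induction the existence of $D_k$-branching witness subtrees $Z_{n,s}$ on which $g_{n+1,s}<g_n$ pointwise, defines $F_{m,n}(\nu)$ as the set of $s$ for which the extended tuple leaves the subtree (so membership in dual$(D_m)$ is automatic), and extracts the infinite descent from $\langle g_n(\bar t\restriction[m,n)):n\ge m\rangle$ along the branch that ${\rm IND}(\bold x)$ supplies. None of these steps --- the tree functions, the witness subtrees, the verification that the strict inequality propagates down the tree on $D_k$-large sets --- appears in your sketch, so as written the proposal has a genuine gap.
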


\begin{remark}
\label{k7}  
0) Note that the present results are incomparable with
\cite[\S4]{Sh:938} - the loss is using depth instead of rank and
possibly using ``pseudo".

\noindent
1) [Assume AC$_{\aleph_0}$]
If in \ref{k6}, for every $n$ we have rk$_{D_n}(\zeta) 
> \text{ suc}_{\text{Fil}^1_\kappa(D_n)}(\zeta)$ \then \, for some
$D^1_n \in \text{ Fil}^1_{\aleph_1}(Y_n)$ for 
$n < \omega$ we have NIND$(\langle
Y_n,D^1_n):n < \omega\rangle$.  (Why?  By \cite[5.9]{Sh:938}).  
But we do not know much on the $D^1_n$'s.

\noindent
2) This theorem applies to e.g. $\zeta = \aleph_\omega,Y_n = \aleph_n,D_n =
   \text{ dual}(J^{\text{bd}}_{\aleph_n})$.  So even in ZFC, 
it tells us things not covered by \cite[\S3]{Sh:513}.  Note that Depth
   and pcf are closely connected but only for sequences of length $\ge
\hrtg(\cP(Y))$. 

\noindent
3) If we assume {\rm IND}$(\langle Y_{\eta(n)},D_{\eta(n)}:n < \omega\rangle$
   for every increasing $\eta \in {}^\omega \omega$, which is quite
   reasonable \then \, in Theorem \ref{k6} we can strengthen the
   conclusion, replacing ``for infinitely many $n$'s" by 
``for every $n < \omega$ large enough".

\noindent
4) Note that \ref{k6}(2) is complimentary to \cite{Sh:835}.
\end{remark}

\begin{observation}
\label{k8}
If $\bold x$ is a filter $\omega$-sequence and $n_* < \omega$ and
IND$(\bold x \rest [n_*,\omega)$ \then \, IND$(\bold x)$.
\end{observation}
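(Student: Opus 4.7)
The plan is to apply the hypothesis IND$(\bold x \rest [n_*,\omega))$ to pin down the tail coordinates of the desired independent sequence, and then to fill in the initial segment $t_{n_*-1},\ldots,t_0$ by a finite downward recursion of length exactly $n_*$.

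Given any $\bar F = \langle F_{m,n} : m < n < \omega\rangle$ in $\text{alg}(\bold x)$, its truncation $\bar F' = \langle F_{m,n} : n_* \le m < n < \omega\rangle$ visibly belongs to $\text{alg}(\bold x \rest [n_*,\omega))$ by direct inspection of Definition \ref{k4}(2). The hypothesis therefore supplies a tail $\langle t_n : n \ge n_*\rangle \in \prod_{n \ge n_*} Y_n$ such that $t_m \notin F_{m,n}(\bar t \rest (m,n])$ for every $n_* \le m < n < \omega$.

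Next, pick $t_{n_*-1}, t_{n_*-2},\ldots,t_0$ in this decreasing order. When the turn of $t_m$ arrives (with $m < n_*$), every coordinate $t_{m'}$ with $m' > m$ is already determined, either by the tail step just performed or by an earlier stage of the present recursion. Consequently, for each $n > m$, the set $F_{m,n}(\bar t \rest (m,n])$ is a specific element of the corresponding dual filter. Invoking the $\aleph_1$-completeness of the filters in play (which is the standing framework of this subsection, compare the hypothesis $\kappa_n \ge \aleph_1$ in \ref{k6}), the countable union $\bigcup_{n > m} F_{m,n}(\bar t \rest (m,n])$ still lies in the relevant dual filter, so its complement is non-empty, and we may choose $t_m$ in that complement. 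Since the recursion has only $n_*$ many steps, finitely many selections suffice, and no choice principle is invoked.

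The resulting $\bar t = \langle t_n : n < \omega\rangle$ then satisfies $t_m \notin F_{m,n}(\bar t \rest (m,n])$ for all pairs $m < n < \omega$: those with $m \ge n_*$ by the tail application, and those with $m < n_*$ by construction, which is exactly IND$(\bold x)$. The only point that needs careful justification is the $\aleph_1$-completeness invoked for the early filters $D_m$ with $m < n_*$, which is what allows the countable family of forbidden sets to have non-empty joint complement; this is the sole obstacle, and it is absorbed into the standing framework of the subsection.
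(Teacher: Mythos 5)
Your proposal is correct and follows essentially the same route as the paper: truncate $\bar F$ to an element of $\text{alg}(\bold x \rest [n_*,\omega))$, obtain the tail from the hypothesis, and fill in $t_{n_*-1},\dots,t_0$ by downward recursion, using that $Y_m$ is not a countable union of members of $\text{dual}(D_m)$. You also correctly flag the $\aleph_1$-completeness of the early filters as the one point needing justification, which matches the paper's own remark that completeness is used exactly to get rid of $n_*$.
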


\begin{PROOF}{\ref{k8}}
Let $\bar F = \langle F_{n,m}:n < m < \omega\rangle \in \text{
alg}(\bold x)$, so $\langle F_{n,m}:n \in [n_*,\omega)$ and $m \in
(n,\omega)\rangle$ belongs to alg$(\bold x \rest [n_*,\omega)$ hence
by the assumption ``IND$(\bold x \rest [n_*,\omega))$ there is $\bar t
= \langle t_n:n \in [n_*,\omega)\rangle \in \prod\limits_{n \ge n_*}
Y_n$ such that $t_n \notin F_{n,m}(\bar t \rest (n,m))$ when $n_* \le
n < \omega$.  Now by downward induction on $n < n_*$ we choose $t_n
\in Y_n$ such that $t_n \notin F_{n,m}(\langle t_k:k \in
[n+1,m])$ for $m \in [n+1,\omega)$.  This is possible as the countable
union of members of dual$(D_n)$ is not equal to $Y_n$.  We can carry the
induction and $\langle t_n:n < \omega\rangle$ is as required to verify
IND$(\bold x)$.
\end{PROOF}

\begin{PROOF}{\ref{k6}}  
\underline{Proof of Theorem \ref{k6}}

We concentrate on proving part (1), part (2) is easier.

Assume this fails.  So for some $n_* < \omega$ for every $n
\in [n_*,\omega)$ there is a counter-example.  As AC$_{\aleph_0}$
   holds we can find a sequence $\langle \bar{\cF}_n:n \in
   [n_*,\omega)\rangle$ such that:
\mn
\begin{enumerate}
\item[$\odot$]  for $n \in [n_*,\omega)$
\begin{enumerate}
\item[$(a)$]  $\bar{\cF}_n = \langle \cF_{n,\varepsilon}:\varepsilon
\le \zeta\rangle$
\sn
\item[$(b)$]  $\cF_{n,\varepsilon} \subseteq {}^{Y_n} \zeta$ is
non-empty
\sn
\item[$(c)$]  $\bar{\cF}_n$ is a $<_{D_n}$-increasing sequence of sets, i.e. 
$\varepsilon_1 < \varepsilon_2 \le \zeta \wedge f_1 \in
\cF_{n,\varepsilon_1} \wedge f_2 \in \cF_{n,\varepsilon_2} \Rightarrow
f_1 <_{D_n} f_2$.
\end{enumerate}
\end{enumerate}
\mn
Now by AC$_{\aleph_0}$ we can choose $\langle f_n:n \in [n_*,\omega)\rangle$
such that $f_n \in \cF_{n,\zeta}$ for $n \in [n_*,\omega)$.
\mn
\begin{enumerate}
\item[$(*)$]  without loss of generality $n_* = 0$.
\end{enumerate}
\mn
[Why?  As $\bold x \rest [n_*,\omega)$ satisfies the assumptions on
$\bold x$ by \ref{k8}.]

Let
\mn
\begin{enumerate}
\item[$\boxplus_1$]  for $m \le n < \omega$ let $Y^0_{m,n} =
\prod\limits_{\ell=m}^{n-1} Y_\ell$ and for $m,n < \omega$ let
$Y^1_{m,n} := \cup\{Y^0_{k,n}:k \in [m,n]\}$ so $Y^0_{m,n} =
\emptyset = Y^1_{m,n}$ if $m > n$ and $Y^0_{m,n} = \{<>\} = Y^1_{m,n}$
if $m=n$
\sn
\item[$\boxplus_2$]  for $m \le n$ let
$\cG^1_{m,n}$ be the set of functions $g$ such that:
\begin{enumerate}
\item[$(a)$]  $g$ is a function from $Y^1_{m,n}$ into $\zeta + 1$
\sn
\item[$(b)$]  $\langle \rangle \ne \eta \in Y^1_{m,n} \Rightarrow
g(\eta) < \zeta$
\sn
\item[$(c)$]   if $k \in [m,n)$ and 
$\eta \in Y^0_{k+1,n}$ then the sequence $\langle
g(\eta \char 94 \langle y \rangle):y \in Y_k\rangle$ belongs to
$\cF_{k,g(\eta)}$ 
\end{enumerate}
\item[$\boxplus_3$]  $\cG^1_{m,n,\varepsilon} := \{g \in
\cG^1_{m,n}:g(\langle\rangle) = \varepsilon\}$ for $\varepsilon \le
\zeta$ and $m \le n < \omega$.
\end{enumerate}
\mn
Now the sets $\cG^1_{m,n}$ are non-trivial, i.e.
\mn
\begin{enumerate}
\item[$\boxplus_4$]  if $m \le n$ and $\varepsilon \le \zeta$ then
$\cG^1_{m,n,\varepsilon} \ne \emptyset$.
\end{enumerate}
\mn
[Why?  We prove it by induction on $n$; first if $n=m$ this is
trivial.  The unique function $g$ with domain $\{<>\}$ and value
$\varepsilon$.  Next, 
if $m<n$ we choose $f \in \cF_{n-1,\varepsilon}$ hence the
sequence $\langle \cG^1_{m,n-1,f(s)}:s \in Y_{n-1}\rangle$ is well
defined and by the induction hypothesis each set in the sequence is
non-empty.  As AC$_{Y_{n-1}}$ holds there is a sequence $\langle g_s:s
\in Y_{n-1}\rangle$ such that $s \in Y_{n-1} \Rightarrow g_s \in
\cG^1_{m,n-1,f(s)}$.  Now define $g$ as the function with domain $Y^1_{m,n}$:

\[
g(\langle \rangle) = \varepsilon
\]

\[
g(\langle s \rangle \char 94 \nu) = g_s(\nu) \text{ for } \nu \in
Y^1_{m,n-1}.
\]
\mn
It is easy to check that $g \in \cG^1_{m,n,\varepsilon}$ indeed so
$\boxplus_4$ holds.]
\mn
\begin{enumerate}
\item[$\boxplus_5$]  if $g,h \in \cG^1_{m,n}$ and $g(\langle \rangle)
< h(\langle \rangle)$ \then \, there is an $(m,n)$-witness $Z$ for $(h,g)$
which means (just being an $(m,n)$-witness means we omit (d)):
\begin{enumerate}
\item[$(a)$]  $Z \subseteq Y^1_{m,n}$ is closed under initial
segments, i.e. if $\eta \in Y^0_{k,n} \cap Z$ and $m \le k < \ell \le n$
then $\eta \rest [\ell,n) \in Y^0_{\ell,n} \cap Z$
\sn
\item[$(b)$]  $\langle \rangle \in Z$
\sn
\item[$(c)$]  if $\eta \in Z \cap Y^0_{k+1,n},m \le k < n$ then $\{y
\in Y_k:\eta \char 94 \langle y \rangle \in Z\} \in D_k$
\sn
\item[$(d)$]  if $\eta \in Z$ then $g(\eta) < h(\eta)$.
\end{enumerate}
\end{enumerate}
\mn
[Why?  By induction on $n$, similarly to the proof of $\boxplus_4$.]
\mn
\begin{enumerate}
\item[$\boxplus_6$]  $(a) \quad$ we can find $\bar g = \langle g_n:n <
\omega\rangle$ such that $g_n \in \cG^1_{0,n,\zeta}$ for $n < \omega$
\sn
\item[${{}}$]  $(b) \quad$ for $\bar g$ as above
for $n < \omega,s \in Y_n$ let $g_{n+1,s}
\in \cG^1_{m,n}$ be defined by 

\hskip25pt $g_{n+1,s}(\nu) = g_{n+1}(\langle s\rangle \char 94 \nu)$.
\end{enumerate}
\mn
[Why?  Clause (a) by $\boxplus_4$ as AC$_{\aleph_0}$ holds, clause (b)
is obvious by the definitions in $\boxplus_2 + \boxplus_3$.]

We fix $\bar g$ for the rest of the proof
\mn
\begin{enumerate}
\item[$\boxplus_7$]  there is $\left < \langle Z_{n,s}:s \in Y_n \rangle:n
< \omega \right >$ such that $Z_{n,s}$ witness $(g_n,g_{n+1,s})$ 
for $n < \omega,s \in Y_n$.
\end{enumerate}
\mn
[Why?  For a given $n < \omega,s \in Y_n$ we know that
$g_{n+1}(\langle s \rangle) < \zeta = g_n(\langle \rangle)$
hence $Z_{n,s}$ as required exists by $\boxplus_5$.  By AC$_{Y_n}$ for each $n$
a sequence $\langle Z_{n,s}:s \in Y_n \rangle$ as required exists, and
by AC$_{\aleph_0}$ we are done.]
\mn

\begin{enumerate}
\item[$\boxplus_8$]  $Z_n := \{\langle s \rangle \char 94 \nu:s \in
Y_{n-1},\nu \in Z_{n-1,s}\}$ is a $(0,n)$-witness
\sn
\item[$\boxplus_9$]  there is $\bar F$ such that:
\begin{enumerate}
\item[$(a)$]  $\bar F = \langle F_{m,n}:m < n < \omega \rangle$
\sn
\item[$(b)$]   $F_{m,n}:Y^1_{m+1,n} \rightarrow \text{ dual}(D_m)$
\sn
\item[$(c)$]   $F_{m,n}(\nu)$ is $\{s \in Y_m:\nu \char 94 \langle s
\rangle \notin Z_{n-1}\}$ when $\nu \in Z_n$ and is $\emptyset$ otherwise.
\end{enumerate}
\end{enumerate}
\mn
[Why?  Check.]
\mn
\begin{enumerate}
\item[$\boxplus_{10}$]  $\bar F$ witness IND$(\langle (Y_n,D_n):n
<\omega\rangle)$ fail.
\end{enumerate}
\mn
[Why?  Clearly $\bar F = \langle F_{m,n}:m < n < \omega \rangle$ has
the right form.

So toward contradiction assume $\bar t = \langle t_n:n < \omega\rangle
\in \prod\limits_{n < \omega} Y_n$ is such that
\mn
\begin{enumerate}
\item[$(*)_1$]  $m < n < \omega \Rightarrow t_m \notin F_{n,n}(\bar t \rest
(m,n])$.
\end{enumerate}
\mn
Now
\mn
\begin{enumerate}
\item[$(*)_2$]  $\bar t \rest [m,n) \in Z_n$ for $m \le n < \omega$.
\end{enumerate}
\mn
[Why?  By $(*)_1$ recalling $\boxtimes_9(c)$.]
\mn
\begin{enumerate}
\item[$(*)_3$]  $g_{n+1}(\bar t \rest [m,n]) < g_n(\bar t \rest
[m,n))$.
\end{enumerate}
\mn
[Why?  Note that $Z_{n,t_n}$ is a witness for $(g_n,g_{n+1,t_n})$ by
$(*)_2 + \boxplus_7$.  So by $\boxplus_5$ we have $\eta \in
Z_{n,t_n} \Rightarrow g_{n+1,t_n}(\eta) < g_n(\eta)$.  But $m < n
\Rightarrow \bar t \rest [m,n] \in Z_n \Rightarrow \bar t \rest [m,n) \in
Z_{n,t_n}$ hence $g_{n+1}(\bar t \rest [m,n]) = g_{n+1,t_n}(\bar t \rest
[m,n)) < g_n(\bar t \rest [m,n))$ as required.]  

So for each $m < \omega$ the
sequence $\langle g_n(\bar t \rest [m,n):n < \omega\rangle$ is a
decreasing sequence of ordinals, contradiction.  Hence there is no
$\bar t$ as above, so indeed $\boxplus_{10}$ holds. But
$\boxplus_{10}$ contradicts an assumption, so we are done.
\end{PROOF}

\begin{remark}
\label{k10}
1) Note that in \ref{k6} there were no use of completeness demands
except of $\aleph_1$-complete when we get rid of $n^*$, still natural to assume
$\aleph_1$-completeness because: 
if $D'_n$ is the $\aleph_1$-completion of $D_n$ then 
IND$(\langle D'_n:n < \omega\rangle)$ is equivalent to IND$(D_n:n <
\omega)$.

\noindent
2) Recall that by \cite{Sh:513}, iff pp$(\aleph_\omega) >
\aleph_{\omega_1}$ \then \, for every $\lambda > \aleph_\omega$ for
infinitely many $n < \omega$ we have 
$(\forall \mu_1 < \lambda)(\text{cf}(\mu) =
\aleph_n \Rightarrow \text{ pp}(\mu) \le \lambda)$.
\end{remark}

\begin{claim}
\label{k13}
{\rm [DC]}  For $\bold x = \langle Y_n,D_n:n < \omega\rangle$ with each
$D_n$ being an $\aleph_1$-complete filter on $Y_n$, each of 
the following is a sufficient condition for {\rm IND}$(\bold x)$, 
letting $Y(<n) =  \prod\limits_{m<n} \, Y_m$
\mn
\begin{enumerate}
\item[$(a)$]  $\bullet \quad D_n$ is a $(\le (\prod\limits_{m<n}
  D_m)^{Y(<n)})$-complete ultrafilter
\sn
\item[$(b)$]  $\bullet \quad D_n$ is a $(\le(\prod\limits_{m<n}
(D_m)^{Y(<n)})$-complete filter
\sn
\item[${{}}$]  $\bullet \quad$ for each $n$ in the 
following game $\Game_{\bold x,n}$
the non-empty player has a

\hskip25pt  winning strategy.  A play last
$\omega$-moves.  In the $k$-th move the 

\hskip25pt  empty player chooses $A_k \in D_n$ and $\langle X_t:t \in
(\sum\limits_{m<n} D_m)^{Y(<n)} \rangle$ 

\hskip25pt  a partition  of $A_k$ and the non-empty player chooses
$t_k \in A_k$.  

\hskip25pt  In the end
the non-empty player wins the play

\hskip25pt  if $\bigcap\limits_{k<\omega} X_{t_k}$ is non-empty
\sn
\item[$(c)$]  like (a) but in the second part the non-empty player
instead $t_k$ chooses $S_k \subseteq
(\sum\limits_{m<n} D_n)^{Y(<n)}$ satisfying $|S_k| \le_X |S|$
and every $D_{\bold x,n}$ is $(\le S)$-complete, $S$ is infinite
\sn
\item[$(d)$]  if $m < n < \omega$ then $D_m$ is $(\le
\prod\limits_{k=m+1}^{n} Y_k)$-complete\footnote{so the $Y_k$'s are
not well ordered!  If $\alpha < \hrtg(Y_n) \Rightarrow D$ is
$|\alpha|^+$-complete then $\alpha^{Y_m}/D_n \cong \alpha$.  If
$\alpha$ is counterexample $D$ project onto a uniform
$\aleph_1$-complete filter on some $\mu \le \alpha$.}
\end{enumerate}
\end{claim}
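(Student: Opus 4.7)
Proof plan: I would aim to show that given any $\bar F = \langle F_{m,n}:m<n<\omega\rangle \in \mathrm{alg}(\bold x)$ (so each $F_{m,n}:\prod_{\ell=m+1}^{n}Y_\ell \to \mathrm{dual}(D_m)$) there is $\bar t \in \prod_{n<\omega}Y_n$ with $t_m \notin F_{m,n}(\bar t \rest (m,n])$ for every $m<n$. Clauses (a)--(d) give four sufficient conditions of increasing subtlety, and I would present them in reverse order, (d) first, because (d) is essentially a direct computation while (a)--(c) require a uniform tree/thinning argument.

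For clause (d), fix $m$. For each $n>m$ the family $\{F_{m,n}(\bar s):\bar s \in \prod_{\ell=m+1}^{n}Y_\ell\}$ has index set $\prod_{\ell=m+1}^n Y_\ell$, so by the hypothesized $(\le \prod_{\ell=m+1}^n Y_\ell)$-completeness of $D_m$ the union $B_{m,n}:=\bigcup_{\bar s}F_{m,n}(\bar s)$ is still in $\mathrm{dual}(D_m)$. Since $D_m$ is $\aleph_1$-complete the countable union $B_m := \bigcup_{n>m}B_{m,n}$ also lies in $\mathrm{dual}(D_m)$, hence $Y_m\setminus B_m \in D_m$ is non-empty. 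Using DC choose $t_m \in Y_m\setminus B_m$ for every $m<\omega$ simultaneously; any such $\bar t$ witnesses $\mathrm{IND}(\bold x)$, because $F_{m,n}(\bar t \rest (m,n])\subseteq B_{m,n}\subseteq B_m$ and $t_m \notin B_m$ by construction.

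Clauses (a)--(c) share a common scheme of proof: instead of loading completeness onto each $D_m$, we load it onto $D_n$ and thin $Y_n$ inductively so that at each stage the ``future'' dependence of $\bar F$ on coordinates $>n$ becomes harmless. Concretely, at stage $n$ each $t \in Y_n$ is assigned a ``colour'' coding, for every $m<n$ and every $k>n$ and $\bar s\in\prod_{\ell=n+1}^{k}Y_\ell$, the pattern of membership of $t_m$ in $F_{m,k}(\bar t \rest (m,n),t,\bar s)$; the colour set is bounded above by the index set built into the hypothesis. Under (a) the strong $(\le(\prod_{m<n}D_m)^{Y(<n)})$-complete ultrafilter $D_n$ collapses this colour set to a single value on some $A_n \in D_n$; under (b) the winning strategy of the non-empty player in $\Game_{\bold x,n}$, combined with DC over the $\omega$ moves of the game, chooses parts whose intersection is non-empty and thereby produces $t_n$ with the same homogeneity effect; under (c) the $(\le S)$-complete filter $D_n$ plays the same role as in (a) but indexed by $S$. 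Applying DC across $n<\omega$ threads these choices into a single $\bar t$ lying in each $A_n$ and avoiding every forbidden set.

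The main obstacle, and the reason (d) separates cleanly from (a)--(c), is precisely the future-dependence of $F_{m,n}(\bar t \rest (m,n])$ on coordinates not yet chosen: a naive level-by-level construction of $\bar t$ cannot even begin, because the set we would need $t_m$ to avoid is not yet determined at stage $m$. In (d) this is sidestepped by summing over all possible futures at once, which is legal only because the completeness of $D_m$ is strong enough. In (a)--(c) the fix is to ``commute'' future dependence with present choice by thinning $Y_n$ to a colour-homogeneous subset; verifying that the partition induced by $\bar F$ on $Y_n$ is actually indexed by the set against which $D_n$ is complete is the central combinatorial bookkeeping and is tailored differently in each of (a), (b), (c), with the game in (b) supplying the abstract version of this bookkeeping when no clean ultrafilter or cardinal bound is available.
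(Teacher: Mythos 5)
Your treatment of clause (d) is correct and essentially complete: the family $\langle Y_m\setminus F_{m,n}(\bar s):\bar s\in\prod_{\ell=m+1}^{n}Y_\ell\rangle$ is definable from the given $F_{m,n}$ without choice, the hypothesized completeness of $D_m$ puts $B_{m,n}=\bigcup_{\bar s}F_{m,n}(\bar s)$ into $\dual(D_m)$, $\aleph_1$-completeness absorbs the union over $n>m$, and $\AC_{\aleph_0}$ (available from DC) selects the $t_m$'s. Since the paper's own proof is the single word ``Straight'', the only question is whether your argument stands on its own; for (d) it does.

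For clauses (a)--(c) there is a genuine gap, in two places. First, the colour you attach to $t\in Y_n$ records the membership pattern of $t_m$ in $F_{m,k}(\bar t\rest(m,n)\,{}^\frown\langle t\rangle\,{}^\frown\bar s)$ over all $k>n$ and all future tuples $\bar s\in\prod_{\ell=n+1}^{k}Y_\ell$; that colour set is indexed by the \emph{later} $Y_\ell$'s, whereas the completeness in (a) is over $(\prod_{m<n}D_m)^{Y(<n)}$, which involves only the \emph{earlier} $Y_m$'s and $D_m$'s, so the partition you want to homogenize is not one the hypothesis allows you to homogenize. Second, and more fundamentally, homogeneity of $Y_n$ cannot by itself produce a good $\bar t$: if every $t$ in the homogeneous set satisfies $t_m\in F_{m,n}(t_{m+1},\dots,t_{n-1},t)$, there is no contradiction, because $t_m$ was fixed before $t_{m+1},\dots,t_n$ and nothing in your construction guarantees it could have been chosen to avoid this. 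The missing idea is the Fubini-type lemma that drives the ultrafilter case: for $\bar s\in\prod_{\ell=m+1}^{n-1}Y_\ell$ the set $F^{-}_{m,n}(\bar s)=\{y\in Y_m:\{t\in Y_n:y\in F_{m,n}(\bar s{}^\frown\langle t\rangle)\}\in D_n\}$ again lies in $\dual(D_m)$, since otherwise intersecting the $Y_m$-indexed (hence, by the completeness of $D_n$, permissible) family of witnessing $D_n$-sets over its elements yields a single $t$ with $F^{-}_{m,n}(\bar s)\subseteq F_{m,n}(\bar s{}^\frown\langle t\rangle)$, contradicting nullity; and because $D_n$ is an ultrafilter, $y\notin F^{-}_{m,n}(\bar s)$ yields a $D_n$-large set of $t$ with $y\notin F_{m,n}(\bar s{}^\frown\langle t\rangle)$. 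Iterating this reduction from the top coordinate downward replaces each $F_{m,k}$ by a fixed member of $\dual(D_m)$, after which a forward induction (choosing $t_n$ outside the countable union of fully reduced null sets and inside the countably many $D_n$-large sets supplied by the partial reductions, using DC and $\aleph_1$-completeness) proves (a). For (b) and (c) the ultrafilter dichotomy is unavailable and the game is exactly what must replace the step ``$y\notin F^{-}_{m,n}(\bar s)$ gives a large set of good $t$''; your sketch asserts that the winning strategy ``produces $t_n$ with the same homogeneity effect'' but does not say how, so those clauses remain unproved as written.
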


\begin{proof}
Straight.
\end{proof}
\bigskip

\subsection {Bounds on the Depth} \
\bigskip

We continue \ref{r29}.  We try to get a bound for singulars of
uncountable cofinality say for the depth, recalling that depth, rank
and ps-$T_D$ are closely related.  

\begin{hypothesis}
\label{c1}
$D$ an $\aleph_1$-complete filter on a set $Y$.
\end{hypothesis}

\begin{remark}
Some results do not need the $\aleph_1$-completeness.
\end{remark}

\begin{claim}
\label{c2}
Assume $\bar\alpha \in {}^Y\text{\rm Ord}$.

\noindent
1) {\rm [DC]} (No-hole-Depth)  If $\zeta + 1 \le 
\text{\rm ps-o-Depth}^+_D(\bar\alpha)$ \then \, for some $\bar\beta  \in
   {}^Y\text{\rm Ord}$, we have $\bar\beta \le \bar\alpha$ {\rm mod} $D$ and
$\zeta +1 = \text{\rm ps-o-Depth}^+(\bar\beta)$.

\noindent
2) In Definition \ref{r26} we may allow $\cF_\varepsilon \subseteq
   {}^Y\Ord$ such that $g \in \cF_\varepsilon \Rightarrow g < f \mod D$.

\noindent
3) If $\bar\beta \in {}^Y\text{\rm Ord}$ and $\bar\alpha = \bar\beta$
   {\rm mod} $D$ \then \, {\rm ps-o-Depth}$^+(\bar\alpha) = \text{\rm
   ps-o-Depth}^+(\bar\beta)$.

\noindent
4) If $\{y \in Y:\alpha_y = 0\} \in D^+$ \then \, {\rm
   ps-o-Depth}$^+(\bar\alpha)=1$.

\noindent
5) Similarly for the other  versions of depth from Definition
   \ref{k1}.
\end{claim}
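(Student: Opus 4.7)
The plan is to treat Part~1 (No-hole-Depth) as the heart of the proof and derive Parts~2--5 as short checks. Throughout Part~1, assume $\{y \in Y : \alpha_y = 0\} = \emptyset$ mod $D$, since otherwise Part~4 trivializes the situation with $\text{ps-o-Depth}^+_D(\bar\alpha) = 1 < \zeta + 1$, making the hypothesis of Part~1 vacuous.

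For Part~1, begin with a witnessing $<_D$-increasing sequence $\bar{\cF} = \langle \cF_\alpha : \alpha < \zeta\rangle$ of non-empty subsets of $\Pi\bar\alpha$, obtained by truncating a longer sequence if necessary from the hypothesis $\zeta + 1 \le \text{ps-o-Depth}^+_D(\bar\alpha)$. Apply Claim~\ref{r24}(1), which uses only DC, to obtain $f^* \in \Pi\bar\alpha$ that $<_D$-bounds $\bigcup_\alpha \cF_\alpha$ and is minimal such: no $f^{**} \in \Pi\bar\alpha$ with $f^{**} <_D f^*$ has the same bounding property. Set $\bar\beta := f^*$; then $\bar\beta < \bar\alpha$ pointwise, hence $\bar\beta \le \bar\alpha$ mod $D$. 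For the lower bound $\text{ps-o-Depth}^+(\bar\beta) \ge \zeta + 1$, replace each $f \in \cF_\alpha$ by $f'$ equal to $f$ on $A_f := \{y : f(y) < \bar\beta(y)\} \in D$ and to $0$ elsewhere; since $\cF_0 \ne \emptyset$ forces $\bar\beta > 0$ mod $D$, the modified $f'$ lies in $\Pi\bar\beta$ with $f' = f$ mod $D$, so the modified family witnesses depth $\ge \zeta + 1$ in $\Pi\bar\beta$.

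The opposite inequality $\text{ps-o-Depth}^+(\bar\beta) \le \zeta + 1$ is the main obstacle. Assume for contradiction that $\bar{\cG} = \langle \cG_\xi : \xi \le \zeta\rangle$ is a $<_D$-increasing sequence of non-empty subsets of $\Pi\bar\beta$. The plan is to extract from $\bar{\cG}$ a common $<_D$-upper bound of $\bigcup_\alpha \cF_\alpha$ strictly below $f^*$ mod $D$, contradicting minimality of $f^*$. Form the pointwise join sequence $\cH_\alpha := \{f \vee g : f \in \cF_\alpha,\ g \in \cG_\alpha\}$ for $\alpha < \zeta$, readily $<_D$-increasing in $\Pi\bar\alpha$; fix $g^\dagger \in \cG_\zeta$ (so $g^\dagger < f^*$ pointwise, because $g^\dagger \in \Pi\bar\beta$); and combine $g^\dagger$ with a second application of Claim~\ref{r24}(1) to $\bar{\cH}$ to produce a common $<_D$-bound of $\bar{\cF}$ strictly below $f^*$ mod $D$. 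The delicacy of this join-and-extract step is the central technical hurdle, and is where a careful tracking of the minimality of $f^*$ against the $<_D$-strict separation of $g^\dagger$ from $f^*$ must be executed.

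Parts~2--5 are short. For Part~2, given $g \in {}^Y\text{Ord}$ with $g <_D \bar\alpha$, define $g'(y) := g(y)$ when $g(y) < \alpha_y$ and $g'(y) := 0$ otherwise; then $g' = g$ mod $D$ and $g' \in \Pi\bar\alpha$, so this pointwise replacement converts a relaxed witnessing family into a standard one, preserving all $<_D$-comparisons and hence the supremum. Part~3 holds because $\bar\alpha = \bar\beta$ mod $D$ means $\Pi\bar\alpha$ and $\Pi\bar\beta$ differ only on a $D$-null set, across which functions may be freely modified without affecting $<_D$-comparisons or non-emptiness, so witnessing families transfer in either direction. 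For Part~4, observe that $\{y : f(y) < \alpha_y\}$ is disjoint from $B := \{y : \alpha_y = 0\}$, so when $B \in D^+$ no $g \in {}^Y\text{Ord}$ satisfies $g <_D \bar\alpha$; hence the only non-empty $<_D$-increasing witnessing family is the empty one, yielding $\text{ps-o-Depth}^+(\bar\alpha) = 1$. Part~5 goes through verbatim for o-Depth, Depth, and Depth$^+$, since the arguments above depend only on the abstract $<_D$-ordering and on Claim~\ref{r24}(1), not on features particular to the ps-o-variant.
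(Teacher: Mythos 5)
Your Parts 2--5 and the lower bound in Part 1 are fine, but the core of your Part 1 --- taking $\bar\beta := f^*$ where $f^*$ is the $<_D$-minimal upper bound of $\bigcup_{\alpha<\zeta}\cF_\alpha$ supplied by Claim \ref{r24}(1) --- does not work, and the ``join-and-extract'' step you leave as the central hurdle cannot be completed, because the statement it is meant to establish is false. Minimality of $f^*$ as an upper bound of the particular family $\bigcup_\alpha\cF_\alpha$ gives no control over $<_D$-increasing chains in $\Pi f^*$ that have nothing to do with $\bar{\cF}$. Concretely, take $\zeta=1$, $\bar\alpha$ constantly $10$, and $\cF_0=\{f\}$ with $f$ constantly $1$. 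Then $f^*=f+1$ (constantly $2$) is a $<_D$-minimal upper bound of $\cF_0$, since no $f^{**}$ satisfies $f<_D f^{**}<_D f+1$; yet $\Pi f^*={}^Y 2$ contains the $<_D$-increasing $2$-chain $\langle\{\text{const }0\},\{\text{const }1\}\rangle$, so $\text{ps-o-Depth}^+(f^*)=3\neq\zeta+1=2$. (The correct $\bar\beta$ here is the constant function $1$ itself.) No amount of care in combining $g^\dagger$ with a second application of \ref{r24}(1) will repair this, since $f^*$ is already the wrong object.

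The paper's argument avoids upper bounds altogether: using DC together with the $\aleph_1$-completeness of $D$ (an infinite strictly $<_D$-decreasing sequence of ordinal-valued functions is impossible), one may assume $\bar\alpha$ is $<_D$-minimal among all candidates $\bar\alpha'\le_D\bar\alpha$ with $\text{ps-o-Depth}^+(\bar\alpha')\ge\zeta+1$. If this minimal $\bar\alpha$ had depth${}^{+}$ strictly greater than $\zeta+1$, there would be a $<_D$-increasing sequence $\langle\cF_\varepsilon:\varepsilon\le\zeta\rangle$ of non-empty subsets of $\Pi\bar\alpha$, and any $\bar\beta\in\cF_\zeta$ satisfies $\bar\beta<_D\bar\alpha$ while $\langle\cF_\varepsilon:\varepsilon<\zeta\rangle$ (together with your Part 2) witnesses $\text{ps-o-Depth}^+(\bar\beta)\ge\zeta+1$, contradicting minimality. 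You should replace your construction by this minimization over the candidates $\bar\beta$ themselves rather than over upper bounds of one fixed witnessing family.
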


\begin{PROOF}{\ref{c2}}
1) By DC \wilog \, there is no $\bar\beta <_D \bar\alpha$ such that
   $\zeta +1 \le \text{ ps-o-Depth}^+(\bar\beta)$.  Without loss of
   generality $\bar\alpha$ itself fails the desired conclusion hence
   $\zeta +1 < \text{ ps-o-Depth}^+(\beta)$.  By parts (3),(4) \wilog
   \, $y \in Y \Rightarrow \alpha_y > 0$.  As $\zeta + 1 < \text{
   ps-o-Depth}^+(\bar\alpha)$ there is a $<_D$-increasing sequence $\langle
   \cF_\varepsilon:\varepsilon < \zeta +1 \rangle$ with
   $\cF_\varepsilon$ a non-empty subset of $\Pi\bar\alpha$.  Now any
   $\bar\beta \in \cF_\zeta$ is as required: $\zeta +1 \le \text{
   ps-o-Depth}^+(\bar\beta)$ as witnessed by $\langle
   \cF_\varepsilon:\varepsilon < \zeta \rangle$, recalling part (2);
   contradicting the extra assumption on $\bar\alpha$ (being $<_D$-minimal
   such that...).

\noindent
2) Let $\cF'_\varepsilon = \{f^{[\bar\alpha]}:f \in \cF_\varepsilon\}$
   where $f^{[\bar\alpha]}(y)$ is $f(y)$ if $f(y) < \alpha_y$ and is
   zero otherwise.

\noindent
3),4) Obvious.

\noindent
5) Similarly.
\end{PROOF}

\begin{claim}
\label{c4}
[{\rm DC + AC}$_Y$]  If 
$\bar\alpha,\bar\beta \in {}^Y\text{\rm Ord}$ and $D$ is a
filter on $Y$ and $y \in Y \Rightarrow |\alpha_y| = |\beta_y|$
\then \, {\rm ps}-$\bold T_D(\bar\alpha) = \text{\rm ps}-\bold
T_D(\bar\beta)$.   
\end{claim}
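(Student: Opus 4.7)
My plan is to transfer witnessing families for ps-$\bold T_D(\bar\alpha)$ to witnessing families for ps-$\bold T_D(\bar\beta)$ via a coordinatewise bijection, and then invoke symmetry. The role of AC$_Y$ is precisely to let me pick the coordinatewise bijections simultaneously.

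First, using the assumption $y \in Y \Rightarrow |\alpha_y| = |\beta_y|$ together with AC$_Y$, I fix a sequence $\langle h_y:y \in Y\rangle$ with $h_y$ a bijection from $\alpha_y$ onto $\beta_y$. Define $\Phi:\Pi\bar\alpha \to \Pi\bar\beta$ by $\Phi(f)(y) = h_y(f(y))$ for $y \in Y$. Then $\Phi$ is a bijection, with inverse $\Phi^{-1}(g)(y)=h_y^{-1}(g(y))$, and since each $h_y$ is injective we have for all $f_1,f_2 \in \Pi\bar\alpha$
\[
\{y \in Y:f_1(y) \ne f_2(y)\} = \{y \in Y:\Phi(f_1)(y) \ne \Phi(f_2)(y)\},
\]
so $f_1 \ne_D f_2 \Leftrightarrow \Phi(f_1) \ne_D \Phi(f_2)$.

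Next, given any family $\bold F$ of non-empty subsets of $\Pi\bar\alpha$ witnessing $\hrtg(\bold F) \le \ps-\bold T_D(\bar\alpha)$ as in Definition \ref{r26}(1), I set $\bold F' = \{\Phi[\cF]:\cF \in \bold F\}$ where $\Phi[\cF] = \{\Phi(f):f \in \cF\}$. Since $\Phi$ is a bijection, each $\Phi[\cF]$ is a non-empty subset of $\Pi\bar\beta$, the map $\cF \mapsto \Phi[\cF]$ is a bijection from $\bold F$ onto $\bold F'$, and the $\ne_D$-disjointness condition is preserved by the displayed equivalence above. Hence $\bold F'$ satisfies the requirements of Definition \ref{r26}(1) for $\bar\beta$ and $\hrtg(\bold F') = \hrtg(\bold F)$, giving $\ps-\bold T_D(\bar\alpha) \le \ps-\bold T_D(\bar\beta)$.

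Finally, applying the same argument with the roles of $\bar\alpha$ and $\bar\beta$ interchanged (using the sequence $\langle h_y^{-1}:y \in Y\rangle$, which exists without extra choice once the $h_y$'s have been fixed) yields the reverse inequality, hence equality. The only non-trivial step is the initial simultaneous choice of $\langle h_y:y \in Y\rangle$; everything else is a direct unwinding of Definition \ref{r26}(1).
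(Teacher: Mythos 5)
Your proof is correct and is exactly the routine argument the paper has in mind: the paper's own proof is just the word ``Straight.'', and your coordinatewise-bijection transfer (with AC$_Y$ used precisely to choose the sequence $\langle h_y:y\in Y\rangle$) is the intended unwinding of Definition \ref{r26}(1). Nothing is missing.
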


\begin{proof}
Straight.
\end{proof}

Assuming full choice the following is a version of Galvin-Hajnal theorem.
\begin{theorem}
\label{c7}
[{\rm DC + AC}$_Y$]
Assume $\alpha(1) < \alpha(2) < \lambda^+$, {\rm
ps-o-Depth}$^+(\lambda) \le \lambda^{+\alpha(1)}$ and $\xi =
\hrtg({}^Y \alpha(2)/D)$.  \Then \, {\rm ps-o-Depth}$^+_D(\lambda^{+
\alpha(2)}) < \lambda^{+(\alpha(1)+\xi)}$.
\end{theorem}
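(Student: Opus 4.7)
The plan is to adapt the Galvin--Hajnal norm method to the pseudo-depth setting, proceeding by induction. Toward a contradiction, suppose ps-o-Depth$^+_D(\lambda^{+\alpha(2)}) \ge \lambda^{+(\alpha(1)+\xi)}$, and fix a witnessing $<_D$-increasing sequence $\langle \cF_\varepsilon : \varepsilon < \lambda^{+(\alpha(1)+\xi)}\rangle$ of non-empty subsets of ${}^Y(\lambda^{+\alpha(2)})$. By the No-Hole-Depth Claim \ref{c2}(1) I may shrink $\bar\alpha$ to a single $\bar\beta \in {}^Y\lambda^{+\alpha(2)}$ so that every $\cF_\varepsilon$ lies in $\Pi\bar\beta$; using AC$_Y$ and DC, pick representatives $f_\varepsilon \in \cF_\varepsilon$.

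The Galvin--Hajnal-style step attaches to each $f_\varepsilon$ the norm function $\gamma_\varepsilon \in {}^Y\alpha(2)$ defined by letting $\gamma_\varepsilon(y) < \alpha(2)$ be the unique $\gamma$ with $\lambda^{+\gamma} \le f_\varepsilon(y) < \lambda^{+\gamma+1}$ (and $0$ when $f_\varepsilon(y) < \lambda$); well-definedness uses $f_\varepsilon \le \bar\beta < \lambda^{+\alpha(2)}$. Now consider the sequence $\langle \gamma_\varepsilon/{=_D} : \varepsilon < \lambda^{+(\alpha(1)+\xi)}\rangle$ of classes in ${}^Y\alpha(2)/{=_D}$: since $\xi = \hrtg({}^Y\alpha(2)/D)$, one cannot inject $\xi$ into this quotient, and since the sequence has length well beyond $\xi$, a pigeonhole argument (made precise via \ref{r36} and the regularity of $\lambda^{+(\alpha(1)+\xi)}$) yields a single $\gamma^* \in {}^Y\alpha(2)$ and a subset $\Xi \subseteq \lambda^{+(\alpha(1)+\xi)}$ of order type $\ge \lambda^{+(\alpha(1)+\xi')}$ for some $\xi' < \xi$, on which $\gamma_\varepsilon =_D \gamma^*$. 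On a $D$-set, the restricted sequence $\langle \cF_\varepsilon : \varepsilon \in \Xi\rangle$ then lives in the product $\prod_y \lambda^{+\gamma^*(y)+1}$, which is strictly below the $\alpha(2)$-th level coordinatewise since $\gamma^*(y)+1 \le \alpha(2)$ with $\gamma^*(y) < \alpha(2)$.

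To close the loop I would strengthen the inductive statement to bound ps-o-Depth$^+_D$ of arbitrary products $\prod_y \lambda^{+\rho(y)+1}$ with $\rho \in {}^Y\alpha(2)$, by induction on the $<_D$-rank of $\rho$ (the true Galvin--Hajnal analog of $\alpha(2)$). The base case $\rho =_D \mathbf{0}$ reduces, after peeling off via \ref{e2}/\ref{e3}, to the hypothesis ps-o-Depth$^+(\lambda) \le \lambda^{+\alpha(1)}$; the successor/limit cases use the pigeonhole above, where each drop in rank costs only a factor bounded by $\xi$, so the exponent grows within the budget $\alpha(1)+\xi$. Applying the strengthened inductive bound to $\gamma^*$, one bounds the depth of $\prod_y \lambda^{+\gamma^*(y)+1}$ by some $\lambda^{+(\alpha(1)+\xi'')}$ with $\xi'' < \xi$, contradicting $|\Xi| \ge \lambda^{+(\alpha(1)+\xi')}$.

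The principal obstacle will be the pigeonhole step in the absence of full choice: the bound $\xi$ is only a Hartogs number, so one must replace the cardinality comparison by an injection/no-injection argument, and then extract a long subsequence of constant $\gamma_\varepsilon/{=_D}$ using \ref{r36}(3)--(4) to convert the $S$-almost-smaller-than relation on the sequence into a genuine length bound. A secondary difficulty is that the inductive hypothesis must be applied to the non-constant product $\prod_y \lambda^{+\gamma^*(y)+1}$ rather than to a constant $\lambda^{+\alpha(2')}$; this is precisely what forces the reformulation in terms of a $<_D$-rank on ${}^Y\alpha(2)$, and uses AC$_Y$ both when picking the $f_\varepsilon$ and when invoking \ref{c4} to see that the bound depends only on $\gamma^*/{=_D}$.
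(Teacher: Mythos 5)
Your plan diverges from the paper's and contains two load-bearing steps that fail under DC + AC$_Y$. First, ``pick representatives $f_\varepsilon\in\cF_\varepsilon$'' is an instance of AC for a family indexed by $\lambda^{+(\alpha(1)+\xi)}$, which neither DC nor AC$_Y$ supplies; avoiding exactly this selection is why the theorem is stated for \emph{pseudo} depth and why the witnesses are sets rather than single functions. (The later claim that $\langle\cF_\varepsilon:\varepsilon\in\Xi\rangle$ ``lives in'' $\prod_{y}\lambda^{+\gamma^*(y)+1}$ compounds the error: only your chosen representatives are controlled by $\gamma^*$, not the other members of the sets $\cF_\varepsilon$.) Second, the pigeonhole step does not produce what you need. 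The image of $\varepsilon\mapsto\gamma_\varepsilon/{=_D}$ is a well-orderable subset of ${}^Y\alpha(2)/D$ and hence does have order type $<\xi$, but $\lambda^{+(\alpha(1)+\xi)}$ need not be regular (for $\xi$ a limit it is a limit cardinal of small cofinality), so no single fiber need have order type $\ge\lambda^{+(\alpha(1)+\xi')}$; and even if one did, the final contradiction requires the $\xi'$ coming from the pigeonhole to dominate the $\xi''$ coming from the inductive bound, which nothing arranges. The Galvin--Hajnal induction itself is only sketched: \ref{e2} and \ref{e3} concern pseudo true cofinality, not depth, and the assertion that each drop in $<_D$-rank ``costs only a factor bounded by $\xi$'' is exactly the content that would have to be proved.

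The paper's proof needs neither representatives nor a rank induction. For each cardinal $\mu$ with $\lambda^{+\alpha(1)}<\mu\le\lambda^{+(\alpha(1)+\xi)}$ it forms the \emph{canonically defined} set $\cF_\mu$ of all $f\in{}^Y\{\lambda^{+\alpha}:\alpha<\alpha(2)\}$ whose exact pseudo-depth is $\mu$. These sets are pairwise disjoint and invariant under $=_D$ (by \ref{c2}(3)), and the No-Hole-Depth Claim \ref{c2}(1), together with \ref{c4} (used to replace an arbitrary $\bar\beta$ by the function sending $y$ to the least $\lambda^{+\gamma}\ge|\beta_y|$ without changing the relevant invariants), shows that the set of $\mu$ with $\cF_\mu\ne\emptyset$ is an initial segment of this interval containing every such $\mu$ below $\text{ps-o-Depth}^+_D(\lambda^{+\alpha(2)})$. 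A well-ordered family of pairwise disjoint non-empty $=_D$-invariant subsets of ${}^Y\alpha(2)$ is precisely what $\xi=\hrtg({}^Y\alpha(2)/D)$ bounds, so that initial segment has order type $<\xi$, which is the conclusion. If you want to repair your write-up, replace the representative selection and the pigeonhole by this classification by exact depth; the Galvin--Hajnal rank is not needed here.
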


\begin{PROOF}{\ref{c7}}
Let $\Lambda = \{\mu:\lambda^{+\alpha(1)} < \mu \le 
\lambda^{\alpha(1)+ \xi}\}$ and for every $\mu \in \Lambda$ let
\mn
\begin{enumerate}
\item[$(*)_1$]  $\cF_\mu = \cF(\mu) = 
\{f:f \in {}^Y\{\lambda^{+ \alpha}:\alpha <
\alpha(2)\}$ and $\mu = \text{ ps-Depth}^+_D(f)\}$
\sn
\item[$(*)_2$]  obviously $\langle \cF_\mu:\mu \in \Lambda\rangle$ is
a sequence of pairwise disjoint subsets of ${}^Y\alpha(2)$ closed
under equality modulo $D$.
\end{enumerate}
\mn
By the no-hole-depth claim \ref{c2}(1) above we have
\mn
\begin{enumerate}
\item[$(*)_3$]  if $\mu_1 < \mu_2$ are from $\Lambda$ and $f_{\mu_2} \in
\cF_2$ \then \, for some $f_1 \in \cF_{\mu_1}$ we have $f_1 < f_2$ mod $D$
\sn
\item[$(*)_4$]  $\xi > \sup\{\zeta+1:\cF(\lambda^{+(\alpha+\zeta)}) \ne
\emptyset\}$ implies the conclusion.
\end{enumerate}
\mn
Lastly, as $\xi = \hrtg({}^{Y_{\alpha(2)}}/D)$ we are done.
\end{PROOF}

\begin{remark}
0) Compare this with conclusion \ref{r21}.

\noindent
1) We may like to lower $\xi$ to ps-Depth$^+_D(\alpha(2))$, toward
   this let
\mn
\begin{enumerate}
\item[$(*)_1$]   $\cF'_\mu = \{f \in \cF_\mu:$ there is no $g \in
\cF_\mu$ such that $g < f$ mod $D\}$.
\end{enumerate}
\mn
By DC
\mn
\begin{enumerate}
\item[$(*)_2$]   if $f \in \cF_\mu$ then there is $g \in \cF'_\mu$ 
such that $g \le_D f$ mod $D$.
\end{enumerate}
\mn
2) Still the sequence of those $\cF'_\mu$ are not $<_D$-increasing.
\end{remark}

Instead of counting cardinals we can count regular cardinals.
\begin{theorem}
\label{c13}
[{\rm DC+AC}$_Y$]
The number of regular cardinals in the interval

\noindent
$(\lambda^{+\alpha(1)},\text{\rm
ps-depth}^+_{Y,\kappa)}(\lambda^{+\alpha(2)})$ is at most 
$\hrtg({}^Y\alpha(2)/D)$ \when \,:
\mn
\begin{enumerate}
\item[$(a)$]  $\alpha(1) < \alpha(2) < \lambda^+$
\sn
\item[$(b)$]  $\kappa > \aleph_0$
\sn
\item[$(c)$]  $D$ is a $\kappa$-complete filter on $Y$
\sn
\item[$(d)$]  $\lambda^{+\alpha(1)} = 
\text{\rm ps-Depth}_{(Y,\kappa)}(\lambda)$.
\end{enumerate}
\end{theorem}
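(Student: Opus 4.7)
The plan is to mimic the proof of Theorem \ref{c7} almost verbatim, but restrict attention to regular cardinals only. Set $\Delta := \ps\text{-depth}^+_{(Y,\kappa)}(\lambda^{+\alpha(2)})$, let $\Lambda_{\mathrm{reg}} := \{\mu : \mu \text{ regular},\ \lambda^{+\alpha(1)} < \mu < \Delta\}$, and $\xi := \hrtg({}^Y\alpha(2)/D)$. The goal is to inject $\Lambda_{\mathrm{reg}}$ (which is well-ordered by cardinal order) into ${}^Y\alpha(2)/D$, yielding $|\Lambda_{\mathrm{reg}}| < \xi$ as required.

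For each $\mu \in \Lambda_{\mathrm{reg}}$, regularity together with $\mu < \Delta$ produces a $<_D$-increasing $\mu$-sequence of nonempty subsets of ${}^Y\lambda^{+\alpha(2)}$, so $\mu + 1 \le \ps\text{-o-}\Depth^+_D(\langle \lambda^{+\alpha(2)}\rangle_{y \in Y})$. By the no-hole depth claim \ref{c2}(1) (which uses DC), one obtains $\bar\beta^\mu \le_D \langle \lambda^{+\alpha(2)}\rangle$ with $\ps\text{-o-}\Depth^+(\bar\beta^\mu) = \mu + 1$. Identifying $\{\lambda^{+\alpha} : \alpha < \alpha(2)\}$ with $\alpha(2)$ in the obvious way, set
\[
\cF_\mu := \{f \in {}^Y\{\lambda^{+\alpha} : \alpha < \alpha(2)\} : \ps\text{-}\Depth^+_D(f) = \mu + 1\},
\]
exactly parallel to Theorem \ref{c7}. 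A quantization of $\bar\beta^\mu$ (rounding each value up to the next $\lambda^{+\alpha}$) shows $\cF_\mu \neq \emptyset$; here assumption (d) is crucial, since the rounding error at each coordinate has depth $\leq \lambda^{+\alpha(1)} < \mu$, so the quantized function retains depth $\mu + 1$ modulo $D$.

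The sequence $\langle \cF_\mu : \mu \in \Lambda_{\mathrm{reg}}\rangle$ consists of pairwise disjoint sets, each closed under $=_D$, and the no-hole claim supplies the strict comparison: for $\mu_1 < \mu_2$ in $\Lambda_{\mathrm{reg}}$ and any $f_2 \in \cF_{\mu_2}$, some $f_1 \in \cF_{\mu_1}$ satisfies $f_1 <_D f_2$. Hence $\{\cF_\mu/=_D : \mu \in \Lambda_{\mathrm{reg}}\}$ is a pairwise disjoint family of nonempty subsets of ${}^Y\alpha(2)/D$, with $\mu \mapsto \cF_\mu/=_D$ injective. Invoking AC$_Y$ together with DC, as in the parent proof of \ref{c7}, to extract representatives along the well-ordered index set using the strict $<_D$-comparison yields an injection $\Lambda_{\mathrm{reg}} \hookrightarrow {}^Y\alpha(2)/D$, proving $|\Lambda_{\mathrm{reg}}| < \xi$.

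The main obstacle is precisely this extraction step: pairwise disjoint nonempty subsets of a set $A$ directly yield only an injection into $\cP(A)$, so a naive count gives $|\Lambda_{\mathrm{reg}}| < \hrtg(\cP({}^Y\alpha(2)/D))$, which is much weaker than the desired $< \hrtg({}^Y\alpha(2)/D)$. Bridging this gap is the content of the AC$_Y$ hypothesis together with DC, and should exploit the linear $<_D$-structure across the family $\langle \cF_\mu\rangle$ — exactly as is handled (somewhat telegraphically) in the proof of \ref{c7}. A secondary technical care is needed in the quantization step to confirm that the resulting function has depth \emph{exactly} $\mu + 1$ rather than merely $\geq \mu + 1$; this is where hypothesis (d) earns its keep, bounding the depth of the rounding error strictly below $\mu$.
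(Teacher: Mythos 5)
The paper's own ``proof'' of \ref{c13} is the single line ``Straight, using the No-Hole Claim \ref{r22}'', so the intended route goes through the pseudo-pcf no-hole claim: each regular $\mu$ in the interval is represented as a pseudo true cofinality of some $\prod_{y}\lambda^{+\beta_y}$, where \ref{r22}(2) together with \ref{e29} lets one round the witnessing $\bar\alpha'$ to a function into $\{\lambda^{+\alpha}:\alpha<\alpha(2)\}$, because ps-tcf is governed by cofinalities. You instead go through the depth no-hole claim \ref{c2}(1), mimicking \ref{c7}, and your route has a genuine gap exactly at the quantization step. You need each $\cF_\mu$ to be nonempty, i.e.\ a function with values in $\{\lambda^{+\alpha}:\alpha<\alpha(2)\}$ of pseudo depth exactly $\mu+1$; but rounding $\bar\beta^\mu$ up coordinatewise changes the depth in a way that hypothesis (d) does not control. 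Clause (d) bounds $\ps$-$\Depth_{(Y,\kappa)}(\lambda)$, i.e.\ the depth of ${}^Y\lambda$, whereas the ``rounding error'' at a coordinate $y$ with $\beta^\mu_y\in(\lambda^{+\alpha},\lambda^{+(\alpha+1)}]$ ranges over an interval of cardinality $\lambda^{+\alpha}$, which for $\alpha>0$ is not covered by (d); moreover depth, unlike rank, is an order-theoretic invariant and is not subadditive under such coordinatewise decompositions without further argument (compare \ref{c4}, which asserts invariance under coordinatewise change of equal cardinality only for $\ps$-$\bold T_D$, not for depth). As it stands, the assertion that ``the quantized function retains depth $\mu+1$'' is unsupported.

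Conversely, the step you single out as the ``main obstacle'' is not one. Throughout the paper $\hrtg$ is monotone under surjections (this is stated and used explicitly in the proofs of \ref{r24}, \ref{r19} and \ref{r21}); hence a family $\langle \cF_\mu/{=_D}:\mu\in\Lambda_{\mathrm{reg}}\rangle$ of pairwise disjoint nonempty subsets of ${}^Y\alpha(2)/D$, indexed by the well-ordered set $\Lambda_{\mathrm{reg}}$, immediately yields a surjection from ${}^Y\alpha(2)/D$ onto $\Lambda_{\mathrm{reg}}$ (send each class to the index of the unique piece containing it, everything else to the least index), whence $|\Lambda_{\mathrm{reg}}|<\hrtg({}^Y\alpha(2)/D)$ with no appeal to $\AC_Y$ or DC at that point. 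So the choice hypotheses are not needed where you invoke them, and they do not repair the quantization step where they would be needed. Finally, $\ps$-$\mathrm{depth}^+_{(Y,\kappa)}$ is a supremum over $\kappa$-complete filters on $Y$, so the increasing sequence witnessing $\mu<\Delta$ is $<_{D_1}$-increasing for some $D_1$ that need not be the $D$ of clause (c); your argument silently uses the fixed $D$ throughout, and this mismatch also has to be addressed.
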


\begin{PROOF}{\ref{c13}}
Straight, using the No-Hole Claim \ref{r22}.
\end{PROOF}
\bigskip

\subsection {Concluding Remarks} \
\bigskip

Those are comments to \cite{Sh:938}.

\begin{definition}
We ay $(\Pi \bar\alpha,<_{D_*})$ has weak $\kappa$-true cofinality
$\delta$, omitting $\kappa$ means $\kappa = \aleph_0$, if there is
some witness or $(\bbD,\bar f)$ which means:
\mn
\begin{enumerate}
\item[$(a)$]  $\bbD \subseteq \{D:D$ an $\kappa$-complete filter on
  $Y$ extending $D\}$
\sn
\item[$(b)$]  $D_* = \cap\{D:D \in \bbD\}$
\sn
\item[$(c)$]  $\bar{\cF} = \langle \cF_{D,\alpha}:D \in bbD,\alpha <
  \partial\rangle$
\sn
\item[$(d)$]  $\langle \cF_{D,\alpha}:\alpha < \delta\rangle$ witness
  $(\Pi \bar\alpha,<_D)$ has pseudo-true-cofinality $\delta$.
\end{enumerate}
\end{definition}

\begin{definition}
$\delta = \wtcf_\kappa(\Pi \bar\alpha,<_{D_*})$ means $(\Pi
  \bar\alpha,<_{D_*})$ has weak $\kappa$-true cofinality $\delta$ and
  $\delta$ is minimal (hence a regular cardinal).
\end{definition}

\begin{discussion}
1) Why do not ask $\delta$ to be regular always?  We may consider a
   sequence of $\delta$'s and as in $\id - \cf_\kappa(\bar\alpha)$ in
   \cite{Sh:F1078}.

\noindent
2) Can we $(\ZF + \DC + \AC_{\beth_\omega})$ prove \cite{Sh:460},
   using $p T_D(\bar\alpha)$?  Use \cite[\S1]{Sh:460}.

\noindent
3) Can we generalize the proof of \cite[\S1]{Sh:824} using $\ps-\bold
   T_D(f)$?  We get $\lambda$ is $\ps-\bold T_D(\prod\limits_{i <
   \kappa} \lambda_i),\kappa< \mu$ as witnessed by $\langle
   \cF^+_\alpha:\alpha < \lambda\rangle$, but toward contradiction we
   have $D_n \in \Fil^1_{\kappa^+_n}(\kappa_{n+1})$.
\end{discussion}

\begin{remark}
For $D \in \Fil^1_\kappa(Y),\ps - \bold T_D(f)$ is closely related to
$\sup\{\ps - \bold T_{D_1}(f)$.  $D_1$ is a filter on some $\theta <
\hrtg(Y)$ such that $D_2 \le_{\RK} D$ so natural to define $\ps -
\bold T_{\bbD}$.
\end{remark}

\begin{definition}
1) Assume $\bbD_1$ is a set of filters and let $\pry(\bbD_1)$ be

\begin{equation*}
\begin{array}{clcr}
\{D_2: & \text{ for some } D_1 \in \bbD_2,\mu < \hrtg(\Dom(D_2))
\text{ and} \\
  &h:\Dom(D_1) \rightarrow \mu \text{ we have } D_2 = h(D_1)\}.
\end{array}
\end{equation*}

\mn
2) Let $\ps - \bold T_{\bbD}(\bar\alpha) = \sup\{\ps - \bold
T_D(\bar\alpha):D \in \bbD\}$.

\noindent
3) FILL.
\end{definition}

\begin{claim}
Let $\bbD_1$ be a set of $\aleph_1$-complete filters, $\bbD_2 =
\pry(\bbD_1)$.  Then the following cardinals are $S$-almost equivalent
where $S = \Fil^1_{\aleph_1}(\bbD_1) =
\cup\{\Fil^1_{\aleph_1}(D_1):D_1 \in \bbD_1\}$:
\mn
\begin{enumerate}
\item[$(a)$]  $\ps - \bold T_{\bbD_1}(\bar\alpha)$
\sn
\item[$(b)$]  $\ps - \bold T_{\bbD_1}(\bar\alpha)$
\sn
\item[$(c)$]  FILL.
\end{enumerate}
\end{claim}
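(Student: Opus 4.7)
The plan is to extend the three-way comparison of Theorem \ref{r29} from a single filter $D$ to a set $\bbD_1$ of $\aleph_1$-complete filters, using the $\le^{\alm}_S$ framework from Definition \ref{r32} together with the Rudin--Keisler projection that defines $\bbD_2 = \pry(\bbD_1)$. The three quantities to compare should be (a) the supremum of $\ps-\bold T_{D_1}(\bar\alpha)$ over $D_1 \in \bbD_1$, (b) the corresponding supremum over $\bbD_2$ (with $\bar\alpha$ read along the projection), and (c) a rank/depth quantity such as $\sup\{\rk_{D'}(\bar\alpha) : D' \in S\}$, where $S = \Fil^1_{\aleph_1}(\bbD_1)$. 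Observation \ref{r31} reformulates $\ps-\bold T$ as a supremum over pairwise $\ne_D$-incompatible sequences, so all three cardinals can be expressed in uniform language.

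I would first dispose of the easy direction. If $D_1 \in \bbD_1$, $h : \Dom(D_1) \to \mu$ with $\mu < \hrtg(\Dom(D_1))$, and $D_2 = h(D_1) \in \bbD_2$, then pullback along $h$ sends any pairwise $\ne_{D_2}$-incompatible family witnessing $\ps-\bold T_{D_2}$ to a pairwise $\ne_{D_1}$-incompatible family witnessing $\ps-\bold T_{D_1}$ of the $\bar\alpha \circ h$-version. This gives (b) $\le$ (a) essentially by definition.

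The harder direction is (a) $\le^{\alm}_S$ (b), (c), which I would handle by mimicking Stages A and C in the proof of Theorem \ref{r29}. Given a witnessing family $\bold F$ for $\ps-\bold T_{D_1}(\bar\alpha)$, for each $\cF \in \bold F$ pick (using $\AC_{\cP(Y)}$ of an appropriate set) a representative $f_\cF \in \cF$ and attach to it the pair $(D_\cF, \zeta_\cF) = (\operatorname{dual}(J[f_\cF, D_1]), \rk_{D_1}(f_\cF))$, exactly as in Stage C. By \cite[1.11]{Sh:938} distinct members of $\bold F$ with the same $D_\cF$ give distinct ranks, so splitting $\bold F$ according to the parameter $D_\cF \in \Fil^1_{\aleph_1}(D_1) \subseteq S$ yields an $\le^{\alm}_S$-bound in terms of the rank, hence in terms of (c). To close the loop into (b), observe that the assignment $y \mapsto f_\cF(y)$ itself is a Rudin--Keisler projection: let $h_\cF : \Dom(D_1) \to \sup_y \alpha_y$ be (a suitable truncation of) $f_\cF$ and set $D'_\cF = h_\cF(D_1) \in \bbD_2$; then the values $\zeta_\cF$ for varying $\cF$ with fixed $D_\cF$ give an increasing sequence in $\Pi \bar\alpha$ modulo $D'_\cF$, transporting a $\ps-\bold T$-type witness for $\bbD_1$ into a depth-type witness for $\bbD_2$.

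The main obstacle will be bookkeeping across the different domains appearing in $\bbD_2$: a single $\bar\alpha$ is not literally a function on $\Dom(D_2)$, so the "pull-back" reading of $\bar\alpha$ along $h$ must be made precise, and one must check that the resulting partition into pieces indexed by $S$ really witnesses $\le^{\alm}_S$-equality rather than ordinary equality. The results \ref{r34}--\ref{r37} are designed exactly to absorb such partitioning losses, so once the indexing scheme via $(D_\cF, \zeta_\cF)$ is set up and the projection $h_\cF$ is shown to land in $\bbD_2$, the equivalences between (a), (b), (c) modulo $S$ follow by arguments parallel to Stages A, B, C of \ref{r29}, with the case-analysis now carried out filter-by-filter inside $\bbD_1$.
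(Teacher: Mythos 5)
There is nothing in the paper to compare your proposal against: this claim is an unfinished stub in the ``Concluding Remarks'' subsection. Items $(a)$ and $(b)$ are literally identical in the source (both read $\ps-\bold T_{\bbD_1}(\bar\alpha)$, evidently a typo for $\bbD_2$ in one of them), item $(c)$ is the placeholder ``FILL'', and no proof is given. So the most I can do is assess your reconstruction on its own terms. Your guess at the intended statement --- that $(b)$ should be $\ps-\bold T_{\bbD_2}(\bar\alpha)$ and $(c)$ a rank or depth quantity over $S$ --- is consistent with the preceding remark in the paper (which motivates $\pry(\bbD_1)$ via Rudin--Keisler projections) and with the pattern of Theorem \ref{r29} and of \ref{n13}, where the same list $(a)$--$(d)$ of $S$-almost-equal cardinals includes $\ps-\bold T_{\bbD}$ and $\sup\{\rk_D(\bar\alpha)+1\}$. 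Your overall strategy of redoing Stages A--C of \ref{r29} filter-by-filter, indexing by the pair $(D_\cF,\zeta_\cF)=(\dual(J[f_\cF,D_1]),\rk_{D_1}(f_\cF))$ and absorbing the partition loss via \ref{r34}--\ref{r37}, is the natural one and almost certainly what was intended.

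Two soft spots remain in your sketch that you should not paper over. First, $\ps-\bold T_{D_2}(\bar\alpha)$ is not defined when $D_2$ lives on some $\mu<\hrtg(\Dom(D_1))$ while $\bar\alpha$ is indexed by $\Dom(D_1)$; the pullback direction $(b)\le(a)$ only makes sense after you commit to a specific transported sequence (say $\langle\sup\{\alpha_y:h(y)=z\}:z<\mu\rangle$, which needs the cofinality hypotheses of \ref{e2}/\ref{e23} to stay inside $\Pi\bar\alpha$), and a ``suitable truncation'' is not yet an argument. Second, in your closing step the ordinals $\zeta_\cF$ are ranks, not elements of $\Pi\bar\alpha$; what \cite[1.11(2)]{Sh:938} gives you is that the representatives $f_\cF$ with a fixed $D_\cF$ are $<_{D_\cF}$-linearly ordered by rank, and turning that into a witness for $\ps-\bold T$ over $\bbD_2$ via the map $h_\cF$ requires checking that $h_\cF(D_1)$ actually belongs to $\bbD_2$ (i.e.\ that the range can be taken below $\hrtg(\Dom(D_1))$) and that incompatibility mod $D_1$ descends to incompatibility mod $h_\cF(D_1)$. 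Neither point is fatal, but both must be filled in before the claim --- once it is itself completed --- can be considered proved.
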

\newpage

\section {On RGCH with little choice}

If we assume $\Ax_4$ the answer seems yes. We like to say that just
assuming DC; so if we have enough cases of IND, we use
\cite[\S(3B)]{Sh:955} if not, assume for every $\kappa$ we have $\bold
p$ more or less as in \cite[3.1]{Sh:938}, i.e. omitting the ranks such
that $(\forall \lambda)\Ax^\theta_{\lambda,\mu_0,\aleph_0}$ all $D \in
\bbD_{\bold p}$ are $\mu_1$-complete.  We try to repeat.

So trying to immitate, e.g. \cite{Sh:829} in the main case we have
$\bold d \in \bbD_{\bold p},\bar\alpha \in {}^{Y(\bold d)}\alpha$.
\Wilog \, $(\forall t \in Y_{\bold d})(\alpha_t,\bold i_1)$ is as
required, using the induction hypothesis.

For $s \in Y_{\bold d}$, using $c \ell:[\alpha]^{< \mu(\bold P)}
\rightarrow [\alpha]^{< \mu(\bold P)}$ which exists by
$\Ax^0_\lambda,\ldots$ we have $\big < \langle f^t_{\bold e,\bold
  y,\beta}:\beta < \alpha_t\rangle:\bold e \in \bbD_{\ge \bold
  i_1},\beta,\bold y \in \Fil^4_{\kappa(\bold i_1,\bold
  p)}(D_{e_x})\big >$ such that every: if $\bold d \in \bbD_{\ge \bold
  i_1},s \in Y_{\bold d},f \in {}^{Y[\bold e]}(\alpha_s)$ then for
some set $\langle (y_i,\beta_i):i < \zeta_p < \kappa(\bold i,\bold
p)\rangle,\bigwedge\limits_{t \in Y_{\bold e}} \, \bigvee\limits_{i}
f(s) = f_{\bold e,\bold y_i,\beta_i}(s)$.

Why?  Given $(\bold e,f)$ if there is no such sequence, we can find a
filter $\kappa(\bold i_1,\bold p)$-complete filter on $Y_{\bold e}$
such that...

But we need more: given $\bar f = \langle f_s:s \in Y_{\bold
  s}\rangle,f_s \in {}^{Y[\bold e]}\alpha_s$ and we like to consider
  all $f_s$ simultaneously, say find $\langle (y_{s,i},\beta_{s,i}):s
  \in Y_s,i < i_s\rangle$ as above.

If we have $\bold d \in \bbD_{\bold p} \Rightarrow \AC_{Y_{\bold d}}$
this can be done.  So the status of $\Ax^0_\lambda$ change: given
$\bold p$ we say?  If $(\forall x A^0_y,\ldots)$ fix.  If not, then
for some $\lambda(*)$ we have $\bold i < \cf(\mu) \Rightarrow \neg
\Ax^0_{\lambda,\kappa(\bold i,\bold p),\aleph_1}$ (can determine the
other cases).

We get
\mn
\begin{enumerate}
\item[$(*)_1$]  if $\partial < \mu_{\bold p}$ then $I = [\lambda]^{<
  \partial}$ and $D_n = \dual(I)$ then $\IND(\langle I,D:n\rangle)$.
\end{enumerate}

\begin{question}
Can we use $\langle ([\lambda]^{\kappa(i,\bold
  p)},I_{\lambda,\kappa(\bold i,p)}):n < \omega\rangle$?

Can we avoid using $\langle \AC_{\kappa(i,\bold p)}:i <
\cf(\mu)\rangle$?  Given $\bar f = \langle f_\alpha:s \in Y_{\bold
  d}\rangle$ we can consider $Y_* = Y_{\bold d} \times Y_{\bold e}$
and for every sequence $\bold x = \langle (\bold y_s,f_s):s \in
Y_{\bold d}\rangle,f_s \in {}^{Y[\bold e]}(\alpha_s)$ let $A_{\bold x}
= \{(s,t)(Y_{\bold d} \times Y_{\bold e}):f_s(t) = f_s(t)\}$.

Now we may look at ($R$ not too large)

\[
D^* = \{Z \subseteq Y_*: \text{ there is } \langle \bold x_r:r \in P\rangle
\text{ such that } Y \backslash Z \subseteq 
\bigcup\limits_{r \in R} A_{\bold x_r}\}.
\]

\mn
So $D_R$ is a $\kappa_p(i_1,\bold p)$-complete filter.

Let $D^*_{R,s}$ be the projection of $D^*_R$ to $\{s\} \in 
Y_{\bold e}$.  Clearly it is the filter defined by $(\alpha_s,f_s)$.
\end{question}

\noindent
Recall \cite[2.2]{Sh:835}.
\begin{definition}
\label{n2}
We say $\Ax^0_{\alpha,\delta}$ when some $c \ell$ exemplifies it which
means:
\mn
\begin{enumerate}
\item[$(a)$]  $c \ell:[\alpha]^{< \kappa} \rightarrow [\alpha]^{<
  \mu}$
\sn
\item[$(b)$]  $u \subseteq c \ell(u)$
\sn
\item[$(c)$]  $u_1 \subseteq u_2 \Rightarrow c \ell(u_1) \subseteq c
  \ell(u_2)$
\sn
\item[$(d)$]  there is no sequence $\langle \alpha_n:n < \omega\rangle
  \in {}^\omega \alpha$ such that $\alpha_n \notin c \ell\{\alpha_k:k
  > n\}$.
\end{enumerate}
\end{definition}

\begin{definition}
\label{n4}
We say $\bold x$ is a filter system (like \cite[3.1]{Sh:938}, add
$\kappa_{\bold p,\bold d},\Rep_{\kappa(\bold d,\bold p)}(D_{\bold p})$
but no $\rk$
\mn
\begin{enumerate}
\item[$(a)$]    $\mu$ is singular
\sn
\item[$(b)$]   each $\bold d \in \bbD$ is (or just we can compute
from it) a pair $(Y,D) = (Y_{\bold d},D_{\bold d}) = (Y[\bold
d],D_{\bold d}) = (Y_{\bold p,\bold d},D_{\bold p,\bold d})$ such that:
\sn
\begin{enumerate}
\item[$(\alpha)$]   $\theta(Y_{\bold d}) < \mu$, on $\theta(-)$
see Definition \ref{z0.15}
\smallskip

\noindent
\item[$(\beta)$]   $D_{\bold d}$ is a filter on $Y_{\bold d}$
\end{enumerate}
\sn
\item[$(c)$]   $(\alpha) \quad \kappa_\iota = \kappa_{\bold p,\iota} =
  \kappa(\bold i,\bold p)$ is a cardinal $< \mu$
\sn
\item[${{}}$]  $(\beta) \quad i_1 < i_2 \Rightarrow \kappa_{\bold
  p,i_1} < \kappa_{\bold p,i_2}$
\sn
\item[${{}}$]  $(\gamma) \quad (\forall \sigma < \mu)[\exists i <
  \cf(\mu))(\sigma < kappa_{\bold p,i}$
\sn
\item[${{}}$]  $(\delta) \quad$ if $\bold d \in \bbD_{\ge i}$ then
  $D_{\bold d}$ is $\kappa_{\bold p,i}$-complete
\sn
\item[${{}}$]  $(\varepsilon) \quad \mu$ is strong limit
\sn
\item[$(d)$]   $(\alpha) \quad \Sigma$ is a function with
domain $\bbD$ such that $\Sigma({\bold d}) \subseteq \bbD$
\sn
\item[${{}}$]   $(\beta) \quad$ if ${\bold d} \in \bbD$
and $\bold e \in \Sigma(\bold d)$ then $Y_{\bold e} = 
Y_{\bold d}$ [natural to add $D_{\bold d} \subseteq D_{\bold e}$, 

\hskip25pt this is not demanded but see \ref{m4.9}(2)]
\sn
\item[$(e)$]   $(\alpha) \quad \bold j$ is a function from $\bbD$
onto cf$(\mu)$
\sn
\item[${{}}$]  $(\beta) \quad$ let $\bbD_{\ge i} =
\{\bold d \in \bbD:\bold j(\bold d) \ge i\}$ and $\bbD_i = 
\bbD_{\ge i} \backslash \bbD_{i+1}$
\sn
\item[${{}}$]   $(\gamma) \quad \bold e \in \Sigma(\bold d)
\Rightarrow \bold j(\bold e) \ge \bold j(\bold d)$
\sn
\item[$(f)$]   for every $\sigma < \mu$ for some 
$i < \text{ cf}(\mu)$, if $\bold d \in \bbD_{\ge i}$, \und{then}
$\bold d$ is $(\bold p,\le \sigma)$-complete where
\sn
\item[$(g)$]  $\bold p$ is complete when $\bbD_{\ge i} =
  \{(\kappa,D):\kappa \in [\kappa_{\bold p,i},\mu),D$ a $\kappa_{\bold
  p,i}$-complete filter on $\kappa\}$.
\end{enumerate}
\end{definition}

\begin{definition}
\label{n8}
Let $\Ax^0_{\alpha,\bold p}$ means that: there is $c \ ell$ satisfying
(a)-(c) of \ref{m2} as:
\mn
\begin{enumerate}
\item[$(d)$]  if $\bold d \in \bbD$ and $u \in [\alpha]^{<
  \hrtg(Y[\bold d])}$ \then \, $|c \ell(u)| < \kappa (\bold d,\bold
  p)$.
\end{enumerate}
\mn
FILL
\end{definition}

\begin{claim}
\label{n10}
Assume $\Ax^0_{\alpha,\mu,\le \kappa},D$ a filter on $Y$ and
$\bar\alpha \in {}^Y(\alpha_* +1)$.

Then $\ps-0-\Depth_D(\bar\alpha) \le_S 0-\Depth_D(\bar\alpha)$.

Why?
\end{claim}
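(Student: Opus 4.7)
The plan is to use the closure operator $c\ell$ provided by $\Ax^0_{\alpha,\mu,\le\kappa}$ to convert any witness $\langle \cF_\varepsilon : \varepsilon < \gamma\rangle$ to $\gamma < \ps\text{-}0\text{-}\Depth_D(\bar\alpha)$ into a family of genuine (single-function) $<_D$-increasing sequences, indexed by a small set $S$ of closure-types, each of length at most $0\text{-}\Depth_D(\bar\alpha)$. This is precisely the $S$-almost inequality of Definition~\ref{r32}.

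First I would fix $c\ell$ as guaranteed by $\Ax^0_{\alpha,\mu,\le\kappa}$. Given a $<_D$-increasing sequence $\langle \cF_\varepsilon : \varepsilon < \gamma\rangle$ of non-empty subsets of $\Pi\bar\alpha$ witnessing $\gamma < \ps\text{-}0\text{-}\Depth_D(\bar\alpha)$, assign to each $f \in \bigcup_\varepsilon \cF_\varepsilon$ its type $t(f) = c\ell(\rng(f))$, a subset of $\alpha_*+1$ of cardinality $<\mu$. The collection of possible types is our index $S$. For each $s \in S$ set $\cF_\varepsilon^s = \{f \in \cF_\varepsilon : t(f) = s\}$ and $\gamma_s = \{\varepsilon < \gamma : \cF_\varepsilon^s \ne \emptyset\}$; non-emptiness of each $\cF_\varepsilon$ gives $\gamma = \bigcup_{s\in S} \gamma_s$. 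For $\varepsilon \in \gamma_s$ define the canonical $f^s_\varepsilon \in {}^Y\Ord$ by the pointwise minimum $f^s_\varepsilon(y) = \min\{f(y) : f \in \cF_\varepsilon^s\}$, which needs no choice since all values lie in the set of ordinals $s$.

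The aim is then to show that, after possibly refining the partition by a second type parameter (e.g.\ also recording $c\ell(\rng(f^s_\varepsilon))$), the sequence $\langle f^s_\varepsilon : \varepsilon \in \gamma_s\rangle$ is genuinely $<_D$-increasing, whence $|\gamma_s| \le 0\text{-}\Depth_D(\bar\alpha)$; summing over $s \in S$ yields the conclusion. The main obstacle will be verifying strict $<_D$-increase of the canonical sequences: a priori the pointwise minimum over $\cF_\varepsilon^s$ can behave erratically as $\varepsilon$ grows, because the witnesses to $<_D$-increase between $\cF_\varepsilon$ and $\cF_{\varepsilon'}$ supplied by the hypothesis need not respect the type~$s$. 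I expect the fix to invoke clause~(d) of $\Ax^0$ (no infinite $c\ell$-descending $\omega$-sequence): any persistent failure of strict increase, iterated $\omega$ times using DC (which is part of our ambient hypotheses via \ref{r18d}), should produce an infinite $c\ell$-descending sequence in $\alpha_*+1$, contradicting the axiom. Controlling $|S|$ and performing this bookkeeping is of the same flavour as the arguments in \ref{r29} and \ref{r36} and should be routine once the above refinement is in place.
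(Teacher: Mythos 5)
Your overall strategy --- use the closure operation from $\Ax^0$ to stratify the set-witness $\langle \cF_\varepsilon:\varepsilon<\gamma\rangle$ into type classes and extract a genuine $<_D$-increasing sequence of single functions from each class --- is the same as the paper's, but two of your concrete choices create genuine gaps. First, your index set $S$ is the collection of closure-types $c\ell(\rng(f))$, a family of subsets of $\alpha_*+1$; its size grows with $\alpha_*$, which defeats the point of the $\le_S$ statement. The paper instead represents $f$ by a pair $\bold y=(A,h)$ with $A\in D$ and $h\in{}^Y\kappa$, where $h(t)=\otp\big(f(t)\cap c\ell\{f(s):s\in A\}\big)$ records only the \emph{position} of $f(t)$ inside the ($<\kappa$-sized) closure of the range of $f\rest A$; thus $S=\Rep_\kappa(D)$ has size $|D\times{}^Y\kappa|$, depending only on $Y$, $D$ and the bound $\kappa$ on closures, not on $\alpha_*$. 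Second, and more seriously, your canonical representative $f^s_\varepsilon(y)=\min\{f(y):f\in\cF^s_\varepsilon\}$ has no reason to yield a $<_D$-increasing sequence, as you acknowledge; and the repair you sketch (iterate failures via DC to contradict clause (d) of $\Ax^0$) is not how this is resolved, nor do I see how it could be: a failure of $f^s_{\varepsilon}<_D f^s_{\varepsilon'}$ only tells you that on a $D$-positive set some member of $\cF^s_{\varepsilon'}$ takes small values, and there is no mechanism turning $\omega$ many such failures into a sequence $\langle\alpha_n:n<\omega\rangle$ with $\alpha_n\notin c\ell\{\alpha_k:k>n\}$.

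The paper avoids taking minima altogether: the representation $(A,h)$ is designed so that any two $f_1,f_2\in\cF_\beta$ represented by the same $\bold y$ satisfy $f_1\rest A_{\bold y}=f_2\rest A_{\bold y}$ (each $f_i(t)$ is the $h(t)$-th element of the common closure set), so there is a \emph{unique} canonical $g_{\bold y,\beta}$, namely $f\rest A_{\bold y}$ extended by $0$ off $A_{\bold y}$. Increasingness of $\langle g_{\bold y,\beta}:\beta\in\cU_{\bold y}\rangle$ is then immediate from the increasingness of the original sequence of sets, since for $\beta_1<\beta_2$ any representatives satisfy $f_1<_Df_2$ and $A_{\bold y}\in D$. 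To close the gap you should replace the pointwise-minimum construction by a representation fine enough to force uniqueness on a $D$-large set, and re-index the decomposition by such representations rather than by the closure sets themselves.
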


\begin{proof}
Let $c \ell$ witness $Ax^0_{\alpha_*}$, and assume $u \in [\alpha]^{<
  \hrtg(Y)} \Rightarrow c \ell(u)$.  Let $\kappa = \sup\{|c
  \ell(u)|^+:u \in [\alpha_*]^{< \hrtg(Y)}\}$.  For transparency as $0
  \notin \Rang(\bar\alpha)$, assume $\beta_* <
  \ps-0-\Depth_D(\bar\alpha)$, so there is a sequence $\langle
  \cF_\beta:\beta < \beta_*\rangle$ witnessing it so $f \in\bold
  F_\beta \Rightarrow f < \bar\alpha$.

For each $\beta < \beta_*$ so $\cF_\beta \subseteq \Pi \bar\alpha,f
\in \cF := \cup\{\cF_\beta:\beta < \beta_*\}$, there is $y \in
\Rep_\kappa(D)$ which represents $f$ which means:
\mn
\begin{enumerate}
\item[$(*)_{f,\bold y}$]  $(a) \quad \bold y \equiv (Y,D,A,h)$
\sn
\item[${{}}$]  $(b) \quad$ if $B \in D$ and $B \subseteq A$ then $c
  \ell\{f(t):t \in \beta\} = c \ell\{f(t):t \in A\}$
\sn
\item[${{}}$]  $(c) \quad h$ is a function with domain $A_{\bold y}$
  such that: $h(t) = \otp(f(t) \cap c \ell\{f(s):s \in A_{\bold
  y}\})$ so $< \mu$
\sn
\begin{enumerate}
\item[$\boxplus$]  if $f_1,f_2 \in \cF_\beta$ are represented by
  $\bold y$ then $f_1 \rest A_{\bold y} = f_2 \rest A_{\bold y}$.
\end{enumerate}
\end{enumerate}
Now
\mn
\begin{enumerate}
\item[$\boxplus$]  $|\Rep_\kappa(D)| = |D \times {}^Y \kappa|$
\sn
\item[$\boxplus$]  for $\bold y \in \Rep_\kappa(D)$ let $\cU_{\bold y}
  = \{\beta < \beta_*$: there is $f \in \cF_\beta$ represented by
  $\bold y\}$
\sn
\item[$\boxplus$]  $\langle \cU_{\bold y}:y \in \Rep(D,\kappa)$ is
  well defined
\sn
\item[$\boxplus$]   $\beta_* = \cup\{\cU_{\bold y}:\bold y \in
 \Rep(D,\kappa)$
\sn
\item[$\boxplus$]  for $\bold y \in \Rep_\kappa(D)$ and $\alpha \in 
\cU_{\bold y}$ let $g_{\bold y,\beta}$ is the unique member of $\Pi
\bar\alpha$ such that: if $f \in \cF_\beta$ is represented by $\bold
y$ then $g_{\bold y,\beta} \rest A_{\bold y} = f \rest A_{\bold y}$
and $g_{\bold y,\beta}(t) = 0$ for $t \in Y \backslash A_{\bold y}$
\sn
\item[$\boxplus$]   $\langle g_{\bold y,\beta}:\beta \in \cU_{\bold
  y}\rangle$ is $<_D$-increasing sequence in $\Pi\bar\alpha$.
\end{enumerate}
\end{proof}

\begin{claim}
\label{n13}
Assume $D_*$ is a $\kappa$-complete filter on $Y,\kappa \ge \aleph_1$
and $\Ax^0_{\lambda,\le \gamma,\le \kappa}$ so $\gamma$ acts as an
ordinal and $\mu = \chi$ and $S = \Fil^4_\kappa(D_*,\gamma)$, so
$\partial$ fixes the order type of $c \ell(\{f(s):s \in Y\})$ and
$\bbD = \{\dual(J[f,D]):f \in {}^Y \Ord\}$.

The following cardinals are $S$-almost equal for $\bar\alpha \in {}^Y
\Ord$
\mn
\begin{enumerate}
\item[$(a)$]  $0-\Depth^+_{\bbD}(\bar\alpha)$
\sn
\item[$(b)$]  $\ps-0-\Depth(\bar\alpha)$
\sn
\item[$(c)$]  $\ps-\bold T_{\bbD}(\bar\alpha)$
\sn
\item[$(d)$]  $\cup \sup\{\rk_D(\bar\alpha)+1:D \in \bbD\}$.
\end{enumerate}
\end{claim}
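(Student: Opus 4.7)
The plan is to establish the cyclic chain of inequalities
\[
(d) \le (a) \le (b) \le (c) \le^{\sal}_S (d),
\]
which, combined with the transitivity of $\le^{\sal}_S$ and Observation \ref{r34}, shows that all four cardinals are pairwise $S$-almost equal. The first three links are direct: for $(d) \le (a)$, fix $D \in \bbD$ and use DC on the well-foundedness of $<_D$ to build a $<_D$-increasing sequence of singletons of length $\rk_D(\bar\alpha)+1$ inside $\Pi\bar\alpha$, yielding $\rk_D(\bar\alpha)+1 \le 0\text{-}\Depth^+_D(\bar\alpha) \le (a)$; $(a) \le (b)$ is immediate since singletons are non-empty sets; $(b) \le (c)$ holds because a $<_D$-increasing sequence of non-empty subsets of $\Pi\bar\alpha$ is in particular pairwise $\ne_D$-separated, contributing to $\ps\text{-}\bold T_D(\bar\alpha) \le (c)$.

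The nontrivial link $(c) \le^{\sal}_S (d)$ adapts Stage C of the proof of Theorem \ref{r29}. Given $\chi < \ps\text{-}\bold T_\bbD(\bar\alpha)$, invoke Observation \ref{r31} to obtain a pairwise $\ne_D$-separated family $\langle \cF_\beta : \beta < \chi\rangle$ of non-empty subsets of $\Pi\bar\alpha$ for some $D \in \bbD$. For each $D_1 \in S = \Fil^4_\kappa(D_*, \gamma)$, set
\[
X_{D_1} = \{\beta < \chi : (\exists f \in \cF_\beta)\, \dual(J[f, D_*]) = D_1\},
\]
\[
\zeta_{D_1, \beta} = \min\{\rk_{D_*}(f) : f \in \cF_\beta,\ \dual(J[f, D_*]) = D_1\}.
\]
By \cite[1.11(2),(5)]{Sh:938}, each $\zeta_{D_1, \beta}$ is a well-defined ordinal, $\chi = \bigcup_{D_1 \in S} X_{D_1}$, and the map $\beta \mapsto \zeta_{D_1, \beta}$ is injective on $X_{D_1}$ with image in the ordinal $\rk_{D_1}(\bar\alpha)+1 \le (d)$. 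Hence each $|X_{D_1}|$ is strictly less than $(d)$, and the decomposition of $\chi$ along the index set $S$ witnesses $\chi \le^{\sal}_S (d)$.

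The role of $\Ax^0_{\lambda,\le\gamma,\le\kappa}$ is twofold: it provides the closure operator $c\ell$ that underlies the representation machinery of Claim \ref{n10}, and it guarantees that the dual filters $\dual(J[f, D_*])$ arising from functions $f \in \Pi\bar\alpha$ do live in the parameter set $S = \Fil^4_\kappa(D_*, \gamma)$, which is what makes the $|S|$-indexed decomposition of $\chi$ honest. The core content of the injection step in $(c) \le^{\sal}_S (d)$ is that the $\ne_D$-separation of the $\cF_\beta$'s upgrades, via the rank calculus of \cite[1.11(2)]{Sh:938}, to ``distinct ranks modulo $D_*$'' within each filter class $X_{D_1}$.

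The main obstacle will be the absence of full AC: one cannot uniformly pick a single $f_\beta \in \cF_\beta$, which is why the $\min$ is taken in the definition of $\zeta_{D_1,\beta}$, and why Observation \ref{r31} is invoked to re-parametrize $\bold F$ as a genuine $\chi$-indexed family of sets. The subtle bookkeeping — making sure the rank comparisons go through against $D_*$ rather than $D$, and that the representation-type data supplied by $\Ax^0$ is compatible with the $\Fil^4$ parametrization — is the delicate part, essentially a careful merge of the proof of Claim \ref{n10} with Stage C of Theorem \ref{r29}.
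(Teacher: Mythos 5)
There is nothing in the paper to compare against: the proof of Claim \ref{n13} is literally ``FILL'', so the claim is left unproved in this draft. Judged on its own terms, your proposal has the right overall shape (a cyclic chain of $S$-almost inequalities, with the nontrivial link handled as in Stage C of Theorem \ref{r29}), and the links $(a)\le(b)\le(c)$ and $(c)\le^{\sal}_S(d)$ are fine.

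The genuine gap is the link $(d)\le(a)$. You propose to ``use DC on the well-foundedness of $<_D$ to build a $<_D$-increasing sequence of \emph{singletons} of length $\rk_D(\bar\alpha)+1$''. DC only produces $\omega$-sequences; to get a transfinite increasing sequence of singletons you would need a choice function on the (typically uncountably many) nonempty sets $\{f\in\Pi\bar\alpha:\rk_D(f)=\beta\}$ for $\beta<\rk_D(\bar\alpha)$, which is exactly the kind of choice the paper is avoiding --- this is the whole reason the ``pseudo'' (set-valued) notions exist. Moreover, even passing to sets, \cite[1.11(5)]{Sh:938} only decomposes $\rk_D(\bar\alpha)$ into pieces $X_{D_2}$ indexed by $S$, each of which is $<_{D_2}$-increasing as a sequence of sets; across different $D_2$ there is no linear order, so what one gets is $(d)\le^{\sal}_S(b)$ (Stage A of \ref{r29}), not $(d)\le(a)$. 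To close the cycle back down to the singleton depth $(a)$ you must instead prove $(b)\le^{\sal}_S(a)$, and that is precisely where the hypothesis $\Ax^0_{\lambda,\le\gamma,\le\kappa}$ and the representation machinery of Claim \ref{n10} are essential (a representative $\bold y\in\Rep_\kappa(D)$ determines $f\rest A_{\bold y}$ uniquely, so within each $\cU_{\bold y}$ one gets an honest singleton-valued increasing sequence $\langle g_{\bold y,\beta}\rangle$ without choice). Your proposal mentions $\Ax^0$ only as background and never uses it in an inequality, so as written the cycle does not close and item $(a)$ is not actually tied to the others.
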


\begin{proof}
FILL.
\end{proof}

\begin{theorem}
\label{m8}
Let $\bold p = (\bbD,\mu,\ldots)$ be a filter system and $(\forall
alpha)(\forall^\infty,< \cf(\mu))(\Ax^0_{\lambda,\kappa_{\bold
    p,i+1},\aleph_1})$.

Assume further $\AC_{\kappa(i,\bold p)}$ for $i < \cf(\mu)$.  For
$\bold d \in \bbD_{\bold p}$ let obey??

For every $\alpha$ (question: or $\lambda$?) such that
$\Ax^0_{\alpha,\kappa(\bold p),\aleph_1}$ there is $i < \cf(\mu_{\bold
  p})$ such that: if $\bold d \in \bbD_{\ge i}$ then the following as
$\Rep_{\kappa(\bar{\bold d},\bold p)}(D_{\bold d})$-almost equal
\mn
\begin{enumerate}
\item[$(a)$]  $\alpha$
\sn
\item[$(b)$]  $0-\Depth_{D_{\bold d}}(\alpha)$
\sn
\item[$(c)$]  $\ps-0-\Depth_{D_{\bold d}}(\bar\alpha)$
\sn
\item[$(d)$]  $\ps-\bold T_{D_{\bold d}}(\alpha)$
\sn
\item[$(e)$]  $\rk_{D_{\bold d}}(\alpha)$.
\end{enumerate}
\end{theorem}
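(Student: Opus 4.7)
The plan is to combine the representation technique of Claims \ref{n10} and \ref{n13} with the Galvin--Hajnal style depth bound of Theorem \ref{c7}, using the hypothesis $\Ax^0_{\alpha,\kappa(\bold p),\aleph_1}$ to obtain a closure operator with small values. Choose $i < \cf(\mu_{\bold p})$ large enough that for every $\bold d \in \bbD_{\ge i}$ the axiom $\Ax^0_{\alpha,\kappa(\bold d,\bold p),\aleph_1}$ supplies a map $c\ell_{\bold d}\colon [\alpha]^{<\hrtg(Y_{\bold d})} \to [\alpha]^{<\kappa(\bold d,\bold p)}$ satisfying clauses (a)--(d) of Definition \ref{n2}; such $i$ exists by the assumption $(\forall\alpha)(\forall^\infty i < \cf(\mu))\Ax^0_{\alpha,\kappa_{\bold p,i+1},\aleph_1}$ of the theorem. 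Fix such $\bold d$.

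The easy inequalities go:  (a) $\le$ (b) by looking at the sequence of constant functions $\langle \beta : \beta < \alpha\rangle$, which is $<_{D_{\bold d}}$-increasing; (b) $\le$ (c) since singletons are non-empty sets in the sense of Definition \ref{k1}(6); (c) $\le$ (d) because in a $<_{D_{\bold d}}$-increasing sequence of non-empty sets $\langle \cF_\beta : \beta < \gamma\rangle$ any cross-section gives pairwise $\ne_{D_{\bold d}}$ functions, so $\ps$-$\bold T$ dominates $\ps$-$0$-$\Depth$; and (b), (e) are linked by the argument of Theorem \ref{r29} (Stages A, B) adapted to the single filter $D_{\bold d}$ and partitioned via $\Fil^1_{\aleph_1}(D_{\bold d})$, where one direction uses that the minimum of $\rk_{D_{\bold d}}(f)$ over $f \in \cF_\beta$ is strictly increasing in $\beta$, and the other uses that $\rk_{D_{\bold d}}$ is realised by a $<_{D_{\bold d}}$-increasing chain of singletons.

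The principal step is to show that each of (b)--(e) is $\le^{\sal}_{\Rep_{\kappa(\bold d,\bold p)}(D_{\bold d})} \alpha$. Given any witness (family $\bold F$ or $<_{D_{\bold d}}$-increasing sequence $\langle \cF_\beta\rangle$) of length $\beta_* > \alpha$, we associate to each function $f$ appearing in the witness a representative $\bold y_f = (Y_{\bold d}, D_{\bold d}, A_f, h_f) \in \Rep_{\kappa(\bold d,\bold p)}(D_{\bold d})$ exactly as in the proof of \ref{n10}: choose $A_f \in D_{\bold d}$ so that $c\ell_{\bold d}\{f(t):t\in B\} = c\ell_{\bold d}\{f(t):t\in A_f\}$ for every $B \in D_{\bold d}$ with $B \subseteq A_f$ (possible by clause (d) of Definition \ref{n2} which precludes infinite descent), and set $h_f(t) = \otp(f(t) \cap c\ell_{\bold d}\{f(s):s\in A_f\})$; the use of $\AC_{\kappa(\bold d,\bold p)}$ together with DC is what lets us select these $A_f,h_f$ coherently across $\beta_*$. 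The key fact, which is essentially $\boxplus$ of \ref{n10}, is that any two $f_1,f_2$ sharing the same $\bold y$ satisfy $f_1 \rest A_{\bold y} = f_2 \rest A_{\bold y}$, so they are equal modulo $D_{\bold d} + A_{\bold y}$ and in particular $=_{D_{\bold d}}$-equivalent whenever $A_{\bold y} \in D_{\bold d}$. Therefore the fibre $\cU_{\bold y} := \{\beta < \beta_*: \text{some } f \in \cF_\beta \text{ is represented by } \bold y\}$ has order type bounded by $\alpha$ (via the canonical map $\beta \mapsto h_f(t_0)$ for a fixed $t_0 \in A_{\bold y}$, combined with a Galvin--Hajnal style counting of the residual values as in Theorem \ref{c7}), and since $\beta_* = \bigcup \{\cU_{\bold y} : \bold y \in \Rep_{\kappa(\bold d,\bold p)}(D_{\bold d})\}$, this gives the desired $\le^{\sal}_{\Rep_{\kappa(\bold d,\bold p)}(D_{\bold d})}$ bound.

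The main obstacle will be the last step, the uniform bound on the fibre $\cU_{\bold y}$ by $\alpha$: one needs to convert the local information ``the value $h_f(t)$ lies in $\otp(c\ell_{\bold d}\{f(s):s\in A_{\bold y}\}) < \kappa(\bold d,\bold p)$'' into a global bound on the length of a $<_{D_{\bold d}+A_{\bold y}}$-increasing (resp.~$\ne_{D_{\bold d}+A_{\bold y}}$) chain inside the fibre, which requires iterating the representation argument $\hrtg$-many times exactly as in Theorem \ref{c7}. The pseudo versions (sets rather than elements) add a second layer where one must first select an element per $\cF_\beta$ using $\AC_{\kappa(\bold d,\bold p)}$, then run the representation on the selections; verifying that this selection does not inflate the class $S$ beyond $\Rep_{\kappa(\bold d,\bold p)}(D_{\bold d})$ is the technical heart of the argument.
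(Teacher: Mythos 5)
Your plan diverges from the paper's. The paper proves \ref{m8} by induction on $\alpha$ (Cases 1--4), and in the hard case it composes \emph{two} members of the filter system: it fixes $\bold d$ and $\bar\alpha\in{}^{Y[\bold d]}\alpha$ with a witness $\langle g^*_\varepsilon:\varepsilon<\alpha\rangle$, takes a second $\bold e\in\bbD_{\ge\bold i_1}$ of much higher completeness, forms the composed functions $f_{\beta,s}(t)=g^*_{f_\beta(t)}(s)$, assigns to each $\beta$ the ordinal $\xi_\beta=\rk_{\bold d}(\langle\rk_{D_{\bold e}}(f_{\beta,s}):s\in Y_{\bold d}\rangle)\le\alpha$, and bounds each fibre $u_\xi$ by $\hrtg(\Fil^1_{\aleph_1}(D_{\bold d})\times\Fil^1_{\aleph_1}(D_{\bold e}))$ using the invariants $\bold x^1_\beta,\dots,\bold x^4_\beta$ and the inequality $\hrtg(D_{\bold d})<\comp(D_{\bold e})$. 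You instead work with the single filter $D_{\bold d}$, partition the witness by representatives $\bold y\in\Rep_{\kappa(\bold d,\bold p)}(D_{\bold d})$ as in \ref{n10}, and try to bound each fibre $\cU_{\bold y}$ by $\alpha$. The shape of the conclusion is compatible with that, but the mechanism you give for the fibre bound does not work.

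Concretely: the fact $\boxplus$ of \ref{n10} says that two functions in the \emph{same} $\cF_\beta$ represented by the same $\bold y$ agree on $A_{\bold y}$; its role there is only to collapse each $\cF_\beta$ to a single $g_{\bold y,\beta}$, i.e.\ to reduce pseudo-depth to depth. If, as you assert, \emph{any} two functions sharing $\bold y$ agreed on $A_{\bold y}\in D_{\bold d}$, then a $<_{D_{\bold d}}$-increasing witness could meet each fibre at most once and you would conclude $0\text{-}\Depth\le^{\alm}_S 1$, which is absurd; so across distinct $\beta$'s the closure sets $c\ell\{f(s):s\in A_{\bold y}\}$ differ and the functions do not agree. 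Moreover your proposed embedding $\beta\mapsto h_f(t_0)$ of $\cU_{\bold y}$ into $\alpha$ is constant on the fibre, since $h$ is a component of $\bold y$. So the entire content of the theorem --- that a $<_{D_{\bold d}}$-increasing chain indexed by a fixed representative has order type at most $\alpha$ --- is left unproved, and the appeal to ``iterating \ref{c7}'' cannot supply it: \ref{c7} is a relative bound needing a base case, and for a single filter one cannot bound $0\text{-}\Depth_D(\alpha)$ by $\alpha$ even in ZFC (e.g.\ $\prod_n\aleph_n$ modulo finite sets has depth above $\aleph_\omega$). The leverage in the paper comes precisely from the induction on $\alpha$ and from the second, more complete filter $\bold e$, neither of which appears in your argument.
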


\begin{remark}
%\label{}
1) For (b),(c) their being almost equal we already know, see xxx.

\noindent
2) Use $\rk_{\bold d}$ or $\rk_{D_{\bold d}}$?  Presently, $\rk_d$.
\end{remark}

\begin{proof}

\noindent
\underline{Case 1}:  $\alpha < \mu$

Obvious.
\medskip

\noindent
\underline{Case 2}: $\alpha < \mu^+$

Easy.
\medskip

\noindent
\underline{Case 3}:  $\alpha \ge \mu^+$ and for $\bold d \in \bbD$ and
$\bar\alpha \in {}^{Y[\bold d]}\alpha$ do we have $\alpha <
\ps-0-\Depth(\bar\alpha)$.

Easy by the definitions.
\medskip

\noindent
\underline{Case 4}:  as ab there are $\bold d \in \bbD$ and
$\bar\alpha \in {}^{Y[\bold d]}\alpha$.

Choose $\langle g^*_\varepsilon:\varepsilon < \alpha\rangle$ witness
$\alpha < 0-\Depth^+_{D_{\bold d}}(\alpha)$ or more: such that
$J[g^*_\varepsilon,D_{\bold d}]$ is constant; $D_2$ the dual.

For $s \in Y_{\bold d}$ clearly $\bold i(s) = \min\{i < \cf(\mu)$: for
$\alpha_t,i$ is as required in the claim$\}$.  Clearly $\bold i(s) <
\cf(\mu)$ is well defined by the induction hypothesis
\mn
\begin{enumerate}
\item[$(*)$]  \wilog \, for some $\bold i_0,A = \{s \in Y_{\bold
  d}:\bold i(s) = \bold i-)\} \in D_{\bold d}$.
\end{enumerate}
\mn
[Why?  See Definition \ref{m4}, clause $(*)$?]

We choose $\bold i_1 \in (\bold i_0,\cf(\mu)))$ such that
\mn
\begin{enumerate}
\item[$(*)$]  FILL.
\end{enumerate}
\mn
Now let $\bold e \in \bbD_{\ge \bold i_1}$ and $\beta_* <
0-\Depth^+(\alpha)$ and let $\langle f_\beta:\beta < \beta_*\rangle$
witness this.  Define $\langle f_{\beta,s}:\beta < \beta_*,s \in
Y_{\bold d}\rangle$ with $f_{\beta,s}$ the function from $Y_{\bold e}$
into $\alpha_*$ defined by $f_{\beta,s}(t) = g_{f_\beta(t)}(s)$.

let $\langle \xi_{\beta,s}:\beta < \beta_*,s \in Y_{\bold d}\rangle$
be defined by
\mn
\begin{enumerate}
\item[$\bullet$]  $\xi_{\beta,s} = \rk_{D_{\bold e}}(f_{\beta,s})$.
\end{enumerate}
\mn
Now
\mn
\begin{enumerate}
\item[$(*)$]  $\xi_{\beta,s} < \alpha_s$.
\end{enumerate}
\mn
Lastly, let $\langle \xi_\beta:\beta < \beta_*\rangle$ be defined by
\mn
\begin{enumerate}
\item[$\bullet$]  $\xi_\beta = \rk_{\bold d}(\bar\xi_\beta)$ where
  $\bar\xi_\beta = (\xi_{\beta,s}:s \in Y_{\bold d})$.
\end{enumerate}
\mn
As $\rk_{\bold d}(\bar\alpha = \alpha$ and $\langle \xi_{\beta,s}:s
\in Y_{\bold d}\rangle <_{D_{\bold d}} \bar \alpha$ we have
\mn
\begin{enumerate}
\item[$(*)$]  $\xi_\beta \le \alpha$ (or $\xi_\beta < \alpha$).
\end{enumerate}
\mn
Now for each $\xi \le \alpha$ let
\mn
\begin{enumerate}
\item[$(*)$]  $u_\xi = \{\beta < \beta_*:\xi_\beta = \xi\}$.
\end{enumerate}
\mn
It suffices (check formulation) to prove
\mn
\begin{enumerate}
\item[$\boxplus$]  $|u_\xi| < \hrtg(\Fil^1_{\aleph_1}(D_{\bold d})
  \times \Fil^1_{\aleph_1}(D_{\bold e}))$.
\end{enumerate}
\mn
Why?  For every $\beta < \beta_*$ let $\bold x^1_\beta = (J\langle
\bar\xi_{\beta,s}:s \in Y_{\bold d}\rangle,D_{\bold d}),\bold
x^2_\beta = \langle J[\langle g^*_{f_{\beta,s}(t)}(s):
t \in D_{\bold e}\rangle,D_{\bold e}]:
s \in Y_{\bold d}\rangle,\bold x^3_\beta =
J[f_\beta,D_{\bold e},\bold x^4_\beta = \langle
  J[g^*_{f_{\beta,t}},D_{\bold d}]:t \in Y_{\bold e}\rangle$.]

Now
\mn
\begin{enumerate}
\item[$\bullet$]  if $\beta_1 < \beta_1 < \beta_*$ and
  $(\xi_{\beta_1},\bold x^\ell_{\beta_1}) = (\xi_{\beta_2},\bold
  x^\ell_{\beta_2})$ then $\xi_1 = \xi$.
\end{enumerate}
\mn
[The delicate point: how much should $\bold i_1$ or $\comp(\bold e)$
  be above $\bold d$?  or too similar to \cite[\S2]{Sh:938}.]
\bigskip

\centerline {$* \qquad * \qquad *$}
\bigskip

Let $J = J[\langle \xi_{\beta_\ell,1}:s \in \bold d\rangle,
D_{\bold d}],J_s = J[\langle g_{f_{\beta_\ell,2}(t)}(s):t \in 
D_{\bold d}\rangle]$.

First, note that as $\xi_{\beta_1} = \xi_{\beta_2}$, clearly $A = \{s
\in Y_{\bold d}:\xi_{\beta_{1,s}} = \xi_{\beta_2,s}\} = Y_{\bold d}
\mod J$.  Also for every $s \in A$ we have $B_s := \{t \in Y_{\bold
  e}:g_{f_{\beta_1,s}(t)}() := g_{f_{\beta,s}(t)}(s)\} = Y_{\bold e}
\mod J$.

Is $\bold i_1$ large enough?
\bigskip

\centerline {$* \qquad * \qquad *$}
\mn
\begin{enumerate}
\item[$\bullet$]  $A_{\beta_1,\beta_2} = \{t \in Y_{\bold e}:
f_{\beta_1}(t) < f_{\beta_2}(t)\} = Y_{\bold e} \mod D_{\bold e}$
\sn
\item[$\bullet$]  for $t \in Y_{\bold d}:A^t_{\beta_1,\beta_2} = 
\{s \in Y_{\bold d}:g_{f_{\beta_1}(t)}(s) < g_{f_{\beta_2}(t)}(s)\}$.
\end{enumerate}
\mn
So
\mn
\begin{enumerate}
\item[$\bullet$]  $A_{\beta_1,\beta_2} =  Y_{\bold e} \mod D_{\bold e}$
\sn
\item[$\bullet$]  $A^t_{\beta_1,\beta_2} = Y_{\bold e} \mod D_{\bold d}$
for $t \in A_{\beta_1,\beta_2}$.
\end{enumerate}
\mn
As $\hrtg(D_{\bold d}) < \comp(D_{\bold e})$ by the choice of $\bold
i_2$ and ``$\bold e \in \bbD_{\ge \bold i_1}$", for some $A_* \in
D_{\bold d}$ we have
\mn
\begin{enumerate}
\item[$\bullet$]  $B_* = \{t \in Y_{\bold e}:
A^t_{\beta_1,\beta_2} = A_*\} \ne \emptyset \mod J$ where $J = 
J[f_{\beta_1},D_{\bold e}] = J[f_{\beta_2},D_{\bold e}]$.
\end{enumerate}
\mn
Hence
\mn
\begin{enumerate}
\item[$\bullet$]   for every $s \in A_*,t \in B_*$ we have
$g_{f_{\beta_1}(t)}(s) < g_{f_{\beta_2}(t)} (s)\}$.
\end{enumerate}
\end{proof}
\bigskip

%\bibliographystyle{alphacolon}
%\bibliography{lista,listb,listx,listf,liste,listz}

\end{document}